\pgfplotsset{compat=newest}
\newcommand{\R}{\mathbf{R}}    % Real numbers
\newcommand{\N}{\mathbf{N}}    % Natural numbers
\renewcommand{\vec}[1]{\boldsymbol{#1}}
\newcommand{\id}{id}
\newtheorem{theorem}{Theorem}
\newtheorem{remark}[theorem]{Remark}%
\newtheorem{lemma}[theorem]{Lemma}%
\newtheorem{definition}[theorem]{Definition}
\newtheorem{assumption}[theorem]{Assumption}
\newtheorem{corollary}[theorem]{Corollary}
\begin{document}

\title{Efficiently parallelizable kernel-based multi-scale algorithm}

\author{Federico Lot and Christian Rieger \\
Department of Mathematics and Computer Science, Philipps-Universität Marburg, \\ 
Hans-Meerwein-Straße 6, Marburg, 35032, Hessia, Germany}
\date{}

\maketitle

\abstract{
    The kernel-based multi-scale method has been proven to be a powerful approximation method for scattered data 
	approximation problems which is computationally superior to conventional kernel-based interpolation techniques.
	The multi-scale method is based of an hierarchy of point clouds and compactly supported radial basis functions, typically Wendland functions.
	There is a rich body of literature concerning the analysis of this method including error estimates. This article addresses the efficient 
	parallelizable implementation of those methods. To this end, we present and analyse a monolithic approach to compute the kernel-based multi-scale approximation. 
}

\textbf{Keywords:}
Multilevel kernel methods; Iterative solver; Wendland RBFs; Parallel computing

\section{Introduction}\label{sec:Introduction}
Many approximation problems can be cast in the following form: Let  $\Omega \subset \R^d$  be a sufficiently nice bounded domain and 
$f :\Omega \to \R$ a continuous function.  The task is to reconstruct $f$ from its evaluations on discrete sets of scattered points.
For scattered nodes, radial basis functions are a common tool to compute such reconstructions. If the function values are given exactly interpolation by radial basis functions is often performed.
It is, however, known that the plain kernel-based interpolation leads to densely populated and ill-conditioned linear systems. 

As a remedy for these problems, kernel-based multiscale methods are an already well-established alternative. These methods do not provide an interpolation but a good approximation. 
This idea dates back to \cite{Floater:Iske:1996} from an algorithmic point of view. Later, theoretical results were obtained on bounded domains in \cite{Wendland2010} and \cite{Georgoulis:etal:2013,Kempf:Wendland:2023} for extensions to high-dimensional problems using concepts from sparse grids. 
The idea is not restricted to function reconstruction on bounded domains. There are several applications on spheres, see e.g., \cite{LeGia:etal:2010,LeGia:etal:2017} and even to approximation problems for manifold valued functions, see \cite{Sharon:etal:2023} or rough functions, see \cite{Townsend:Wendland:2012}.
Moreover, the concept of multilevel kernel methods also allows for concepts from signal processing or machine learning such as compression, see e.g. \cite{LeGia:Wendland:2014}. 
Finally multiscale kernel techniques are used to solve partial differential equations, see e.g. 
\cite{Farrell:Wendland:2013,LeGia:etal:2012,Wendland2018}.
We expect that our proposed reformulation can be (in a suitably adapted fashion) applied to all of these problems. 

The computational cost of such schemes is not always feasible in real-world application. However, their structure can exploited to ease the computations, as in \cite{Buttner:Kempf:Wendland:2025}. 

In order to compute the approximation in any of those multiscale methods, the data has to be arranged in a hierarchy of point sets which get denser with increasing level. 
The multiscale method works a as follows: One computes the residual of the approximation based on all the levels up to the current level and approximates this residual on the point set corresponding to the current level with a suitable kernel. This method can be made very effective by scaling the kernel according to the levels.  
However, the method as described above works in a sequential way corresponding to the levels. The approximation on each level can be parallelized if needed but there seems no obvious parallelization strategy with respect to the levels. This is the motivation for this article. We present a reformulation of the kernel-based multiscale method as a block-triangular linear system containing all levels, which allows for efficient and scalable techniques from numerical linear algebra, see Theorem \ref{thm:reformulation} and the numerical results in Section \ref{sec:numerics}. Next, we exploit the specific structure of this block-triangular system in order to derive a factorization into a block triangular matrix and a block lower triangular matrix with normalized diagonals. We show that those factors enable an efficient iterative solution strategy. This iterative methods relies essentially on matrix vector multiplications which can be efficiently parallelized. 
For the block diagonal matrix we will use essentially the conjugate gradients method whereas we use a Jacobi method for the block triangular matrix, see Theorem \ref{thm:jacobi}. Finally, we show theoretically that the block triangular matrix has a certain decay in magnitude of the entries (Lemma \ref{lem:Mbound}). 

This allows to reduce the storage of the matrix by a straight-forward thresholding strategy. This is discussed and analyzed in Theorem \ref{thm:decayerror}.
 We conclude with some numerical experiments. 
\smallskip

The remainder of the article is organized as follows:
In section \ref{sec:notation} we introduce the basic idea and the notation of the kernel-based multiscale method. Moreover, we collect the necessary assumptions on the hierarchy of point sets and the compactly supported kernel. In section \ref{sec:matrixformulation} we provide alternative descriptions of the kernel-based multiscale method which enable the efficient implementation. The reformulation consists of one large triangular system which itself is factored into a blockdiagonal matrix and a block lower triangular matrix with normalized block-diagonal.
In section \ref{sec:thresholding}, we exploit a decay in the block lower triangular matrix in order to achieve a more efficient storage. Moreover, we provide an error analysis for the multiscale kernel methods if the matrix with the reduced storage is used.
In section \ref{sec:numerics} we present numerical results.

\section{Notation and basic setting}\label{sec:notation}
For scattered nodes, radial basis functions are a common tool to compute interpolations. Radial basis functions are described by a function $K_{\Phi}:\R^{d}\times \R^{d} \to \R$ with $K_{\Phi}(\vec{x},\vec{y}) =\Phi(\vec{x}-\vec{y})= \phi(\|\vec{x}-\vec{y} \|_2)$, where $\Phi:\R^{d} \to\R$ is a multivariate translation-invariant function and $\phi: \R \to \R$ is a univariate function. We are mostly interested in strictly positive definite functions with compact support.
We collect the assumptions on the kernel for later reference.
\begin{assumption}
	\label{ass:kernel}
	Let $\Phi:\R^{d} \to\R$ be a radial function and $\tau>\frac{d}{2}$.  Let $c_{\Phi},C_{\Phi}>0$ be positive constants, e assume that the Fourier transform of $\Phi$ behaves like 
	\begin{equation} \label{eq:kernelfourier}
      c_{\Phi} \left(1+\|\vec{\omega}\|_{2}^{2}\right)^{-\tau} \le \widehat{\Phi}(\vec{\omega}) \le  C_{\Phi} \left(1+\|\vec{ \omega}\|_{2}^{2}\right)^{-\tau}  
\end{equation}
	and that $\Phi$ has compact support in the unit ball, i.e., $\text{supp}(\Phi) \subset B_{1}(0)=\{ \vec{x} \in \R^d : \| \vec{x}\|_2\le 1\}\subset \R^{d}$. The scaled kernel is defined as
	\begin{equation}\label{eq:kernelscaling}
    \Phi_{\delta}(\vec{x}) := \delta^{-d}\Phi(\vec{x}/\delta).
\end{equation}
\end{assumption}
The Fourier-transform of $\Phi_{\delta}$ is given as
\begin{equation}
     c_{\Phi} \left(1+\delta^{2}\|\bm \omega\|_{2}^{2}\right)^{-\tau} \le \widehat{\Phi_{\delta}}(\bm \omega) \le  C_{\Phi} \left(1+\delta^{2}\|\bm \omega\|_{2}^{2}\right)^{-\tau}.
    \label{eq:deltafourier}
\end{equation}
Note that the condition \eqref{eq:kernelfourier} ensures that $\Phi$ is the reproducing kernel of an Hilbert space $\mathcal{H}_{\Phi}$ which is isometric to the Sobolev space $H^{\tau}(\R^d)$ and the norms are equivalent. 
For multi-scale methods one usually considers not a fixed set of nodes but instead a sequence of node sets.

As usual, for a given set $X \subset \Omega$, we have the fill distance
\begin{equation*}
   h_{X, \Omega} := \sup_{\vec{x} \in \Omega} \min_{\vec{x}_{j} \in X} \|\vec{x} - \vec{x}_{j}\|_{2}, 
\end{equation*}
and the separation distance
\begin{equation*}
     q_{X} := \frac{1}{2} \min_{j \neq k} \| \vec{x}_{j} - \vec{x}_{k}\|_{2}.
\end{equation*}
We collect the assumptions on the point sets for later reference.
\begin{assumption}
    \label{ass:pointset}
    We assume $\Omega$ to be a bounded domain with Lipschitz boundary.
    Let $N:\N=\{0,1,\dots\} \to \N$ denote a sequence of natural numbers and $X_{N(\ell)} = \{\vec{x}^{(\ell)}_{1}, \dots, \vec{x}^{(\ell)}_{N(\ell)}\} \subset \Omega$ be a set of scattered nodes for $\ell\in \N$. We will consider levels up to level $L\in \N$ and define the total number of points as $N=\sum_{\ell=1}^{L}N(\ell)$.
    Moreover, we define $h_{\ell}:= h_{X_{N(\ell)}, \Omega}$ and $q_{\ell}:=h_{X_{N(\ell)}}$.
    We assume that there exist $ \mu \in (0, 1)$, $h_{0} \in \R_{\ge 0}$ and $c_{h} \in (0,1]$ such that
    	\begin{equation*}
       c_{h} \mu h_{\ell} \le h_{\ell+1} \le \mu h_{\ell}, \quad \text{for } \ell \in \N.
    \end{equation*}
    Moreover we require that the sequence of the data sets $X_{N(\ell)}$ is quasi-uniform, i.e. there exists a constant $c_{q}$ such that, for $q_{\ell} = q_{X_{N(\ell)}}$,
    \begin{equation}\label{eq:quasiuniform}
        q_{\ell} \le h_{\ell} \le c_{q} q_{\ell},  \quad \quad \text{for } \ell \in \N.
    \end{equation}
    We further assume that the number of points at a given level is connected to the parameter $\mu$, i.e. that exists positive $c_{\#}$ such that
    \begin{equation}\label{eq:Nmu}
        c_{\#} \mu^{-d\ell} \le N(\ell)
    \end{equation}
    holds. Moreover, we choose 
    \begin{equation}\label{eq:deltadef}
    	\delta_{\ell} := \nu h_{\ell} \quad \text{ for  }\nu \in \left[ \frac{1}{h_{1}} ,\frac{\gamma}{\mu} \right].
    \end{equation}
    for the scaling parameter in \eqref{eq:kernelscaling} on the  point set $X_{N(\ell)}$.
    Finally, we need two additional assumptions, i.e, $C_1 \mu <1$, where $C_1=C_1(\gamma)\ge 1$ is defined in Theorem \ref{thm:errorbound} and
    \begin{equation}
        \label{ass:eq:chmu}
        \mu^{-d}>2.
    \end{equation}
\end{assumption}
We point out that in practical application the condition \ref{ass:eq:chmu} bis  often fulfilled. Indeed, for $d \ge 2$, chosen $\mu \le 0.7$ implies the inequality, additionally, higher values, i.e. $\mu \approx 0.9$, still implies $\mu^{-d}>1.2$ which impact our analysis up to a small constant. Higher values for $\mu$ are less appealing for practical applications. 

Moreover, given \eqref{eq:quasiuniform} there is a matching lower bound for the inequality \eqref{eq:Nmu}. This lower bound can be shown by defining
\begin{equation*}
    \mathcal{X} = \{\vec{y} \in \Omega \, : \, \|\vec{y}-\vec{x}\|_{2} \le h_{1}, \, \forall \vec{x} \in \cup_{\ell=1}^{L} X_{N(\ell)}\} \subset \Omega
\end{equation*}
and noticing that the following inequality 
\begin{equation*}
    \text{Vol}(\mathcal{X}) \le N(1)\text{Vol}(B_{h_{1}}(0)) = V_{d}N(1)h_{1}^{d}
\end{equation*}
holds. Then, with a volume comparison argument we obtain
\begin{equation*}
    N(\ell) \le \frac{\text{Vol($\mathcal{X}$)}}{\text{Vol}(B_{q_{\ell}}(0))} \le N(1) \left(\frac{h_{1}}{q_{\ell}}\right)^{d} \le N(1)c_{q}^{d} \mu^{-d(\ell-1)},
\end{equation*}
or, with the notation $N(0) := N(1) \mu^{d}$
\begin{equation}\label{eq:Nmu2}
    N(\ell) \le N(0) c_{q}^{d} \mu^{-d\ell},
\end{equation}
which is the lower bound for \eqref{eq:Nmu}. 
We point out, that in some results, we do not need quasi-uniformity as given by \eqref{eq:quasiuniform}. 
We will mention explicitly , if this assumption can be dropped. 

Next, we define approximation spaces on the level $1\le \ell \le L$ as
\begin{equation}\label{eq:localspaces}
    W_{\ell} = \text{span}\{\Phi_{\delta_{\ell}}(\cdot, \vec{x}_{j}^{(\ell)}) \, : \, \ 1\le j \le N(\ell) \}\subset \mathcal{H}_{\Phi_{\ell}}.
\end{equation}
Associated with such space are operators
\begin{align*}
    S_{X_{N(\ell)}}&: \mathcal{H}_{\Phi_{\ell}}\to \R^{N(\ell)}, \quad   S_{X_{N(\ell)}}(f) :=\left(f( \vec{x}^{(\ell)}_{1}),\dots , f( \vec{x}^{(\ell)}_{N(\ell)})\right)^{T}
    \\
    S_{X_{N(\ell)}}^{*}&:\R^{N(\ell)} \to W_{\ell}\subset \mathcal{H}_{\Phi_{\ell}}, \quad S_{X_{N(\ell)}}^{*}(c):=\sum_{n = 1}^{N(\ell)} c_{n} \Phi_{\ell}(\cdot - \vec{x}^{(\ell)}_{n})\\
    \mathcal{I}_{\ell}&: \mathcal{H}_{\Phi_{\ell}} \to W_{\ell}\subset \mathcal{H}_{\Phi_{\ell}}, \quad   \mathcal{I}_{\ell}:=
    S_{X_{N(\ell)}}^* (S_{X_{N(\ell)}} S_{X_{N(\ell)}}^*)^{-1} S_{X_{N(\ell)}}
\end{align*}

The global approximation space is given as 
\begin{equation*}
	V_{L} = W_{1} + \dots +W_{L},
\end{equation*}
where the sum is in general not direct.

Assuming we are given the sequence of point sets $X_{N(\ell)}$ for $1\le \ell \le L$, the approximation of a function $f$ works as follows. Initially, we set $f_0 = 0$ and $e_0 = f$. 
For $1\le \ell \le L$ we iterate
\begin{itemize}
	\item compute approximand $s_{\ell} (f):= \mathcal{I}_{\ell}(e_{\ell-1}) \in W_{\ell}$ based on point set $X_{N(\ell)}$
	\item update $f_{\ell} = f_{\ell-1} + s_{\ell}$ and  $e_{\ell} = e_{\ell-1} - s_{\ell}$ 
\end{itemize} 	
to obtain the final approximation $\sum_{\ell=1}^{L}s_{\ell} (f) \in V_{L}$ with $\sum_{\ell=1}^{L}s_{\ell} (f) \approx f$.
From the algorithm, we directly obtain the recursive representation
\begin{equation}\label{eq:recursive:localinterpol}
	 s_{\ell}(f) = \mathcal{I}_{\ell}(e_{\ell-1}) = \mathcal{I}_{\ell}\left(f - \sum_{j = 1}^{\ell-1} s_{j}(f)\right), \quad 1\le \ell \le L.
\end{equation} 
We need special subsets of multi-indices in the sequel.
\begin{definition}
We define for $n,k\in \N$ the set of ordered multi-indices
\begin{equation*}
	\Gamma_{n,k}:=\left\{\vec{\ell}=(\ell_{1}, \ell_2, \dots, \ell_{k}) \in \N^{k} :  \ell_{i} > \ell_{i+1} \text{ for all } 1 \le i \le k \text{ and } n \ge \ell_{1} > \ell_{k} \ge 1\right\}\subset \N^{k}.
\end{equation*}
For given natural numbers $k, j, m \in \N$ with $k >j+1$ and $m\ge 2$,  we define the sets
\begin{equation}\label{eq:Pkjm}
P_{kj}^{(m)} = \left\{ (i_{1}, ..., i_{m}) \in \N^{m} \, | \, k = i_{1} > ... > i_{m} = j\right\} \subset  \N^{m}
\end{equation}
and the set of all the strictly decreasing vectors of natural numbers from $k$ to $j$
\begin{equation}
 P_{kj} := \bigcup\nolimits_{m=2}\nolimits^{k-j+1} P_{kj}^{(m)}.
\label{eq:Pkj}
\end{equation}
\end{definition}
and the associated concatenation of interpolation operators
\begin{equation*}
\mathcal{I}_{\vec{p}} = \mathcal{I}_{p_{1}} \circ \mathcal{I}_{p_2} \cdots \circ \mathcal{I}_{p_{k}},\quad \vec{p}\in \Gamma_{n,k}.
\end{equation*}
We can state a recursive representation of the local interpolation operators.
\begin{theorem}\label{thm:gamma_operator_extended}
Let $\Omega \subset \R^{d}$ and let $X_{N(\ell)}$ for $1\le \ell \le L$ denote a sequence of discrete point sets in $\Omega$. 
We obtain for all $f \in C(\Omega)$
\begin{equation}
    \label{eq:gamma_operator_extended_{1}}
    s_{1}(f) = \mathcal{I}_{1}(f)
\end{equation}
and
\begin{equation}
    \label{eq:gamma_operator_extended}
    s_{\ell}(f) = \mathcal{I}_{\ell} (\id + \sum_{k = 1}^{\ell-1} \sum_{\bm p \in \Gamma_{\ell-1, k}} (-1)^k \, \mathcal{I}_{\vec{p}})(f), \quad \text{for }2\le \ell \le L.
\end{equation}
\end{theorem}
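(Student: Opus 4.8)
The plan is to reduce the combinatorial identity \eqref{eq:gamma_operator_extended} to the expansion of an ordered operator product. The case $\ell=1$ is immediate: since $e_0=f$ by construction, $s_1(f)=\mathcal{I}_1(e_0)=\mathcal{I}_1(f)$. For the general case I would work with the error operator $E_\ell$ defined through $e_\ell = E_\ell f$. The update rule of the scheme gives $e_\ell = e_{\ell-1}-s_\ell = e_{\ell-1}-\mathcal{I}_\ell(e_{\ell-1}) = (\id-\mathcal{I}_\ell)e_{\ell-1}$, so that, together with $e_0=f$, a one-line induction yields the ordered product
\[
E_{\ell-1} = (\id-\mathcal{I}_{\ell-1})(\id-\mathcal{I}_{\ell-2})\cdots(\id-\mathcal{I}_1).
\]
Because $s_\ell(f)=\mathcal{I}_\ell(e_{\ell-1})=\mathcal{I}_\ell E_{\ell-1}f$, it then suffices to identify $E_{\ell-1}$ with $\id+\sum_{k=1}^{\ell-1}\sum_{\vec p\in\Gamma_{\ell-1,k}}(-1)^k\mathcal{I}_{\vec p}$.

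The heart of the argument is the expansion of this product. Multiplying out amounts to selecting, for each index $j\in\{1,\dots,\ell-1\}$, either the summand $\id$ or the summand $-\mathcal{I}_j$. Such a selection is encoded by the set $S=\{j_1>j_2>\dots>j_k\}$ of indices at which $-\mathcal{I}_j$ is chosen; since the factors are arranged in strictly decreasing order of $j$ from left to right and composition respects that order, the resulting summand is exactly $(-1)^k\mathcal{I}_{j_1}\circ\cdots\circ\mathcal{I}_{j_k}=(-1)^k\mathcal{I}_{\vec p}$ with $\vec p=(j_1,\dots,j_k)$. The admissible chains $\ell-1\ge j_1>\dots>j_k\ge 1$ are precisely the elements of $\Gamma_{\ell-1,k}$, while the empty selection produces the $\id$ term, so summing over all selections reproduces the claimed right-hand side.

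I expect the only delicate point to be making this bookkeeping rigorous, namely verifying that the assignment $S\mapsto\vec p$ is a bijection onto $\bigcup_k\Gamma_{\ell-1,k}$ and that the left-to-right order of the factors induces exactly the composition order in $\mathcal{I}_{\vec p}=\mathcal{I}_{p_1}\circ\cdots\circ\mathcal{I}_{p_k}$. To avoid any hand-waving I would instead prove the identity by induction on $\ell$, writing $E_{\ell-1}=(\id-\mathcal{I}_{\ell-1})E_{\ell-2}$ and inserting the induction hypothesis for $E_{\ell-2}$. Splitting the new factor into $\id$ and $-\mathcal{I}_{\ell-1}$, the first part contributes all multi-indices with $\ell_1\le\ell-2$ (i.e.\ the terms indexed by $\Gamma_{\ell-2,\cdot}$), whereas the second part prepends the index $\ell-1$ and thereby generates exactly the multi-indices of $\Gamma_{\ell-1,\cdot}$ with $\ell_1=\ell-1$, the sign being flipped by the extra factor $-1$. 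Tracking the signs and the index ranges across the two contributions then closes the induction and gives \eqref{eq:gamma_operator_extended}.
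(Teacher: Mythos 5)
Your proof is correct, but it takes a genuinely different route from the paper. The paper proves \eqref{eq:gamma_operator_extended} by induction directly on the representation of $s_{\ell}$: it substitutes the induction hypothesis for every $s_{j}(f)$, $j\le\ell$, into the recursion $s_{\ell+1}(f)=\mathcal{I}_{\ell+1}\bigl(f-\sum_{j=1}^{\ell}s_{j}(f)\bigr)$ and then invokes the resummation identity
\begin{equation*}
\sum_{j=2}^{\ell}\mathcal{I}_{j}\sum_{k=1}^{j-1}\sum_{\vec{p}\in\Gamma_{j-1,k}}(-1)^{k}\,\mathcal{I}_{\vec{p}}
=\sum_{k=2}^{\ell}\sum_{\vec{p}\in\Gamma_{\ell,k}}(-1)^{k-1}\,\mathcal{I}_{\vec{p}},
\end{equation*}
which collapses the nested sums; this identity is the least transparent step and is stated in the paper with only a brief justification. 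You instead linearize the problem through the error operator: from $e_{\ell}=e_{\ell-1}-\mathcal{I}_{\ell}(e_{\ell-1})$ you get the ordered product $E_{\ell-1}=(\id-\mathcal{I}_{\ell-1})\cdots(\id-\mathcal{I}_{1})$, so that $s_{\ell}(f)=\mathcal{I}_{\ell}E_{\ell-1}f$ and the whole combinatorial content reduces to expanding a product of non-commuting binomial factors, where the subsets of chosen indices biject onto $\bigcup_{k}\Gamma_{\ell-1,k}$ and the left-to-right ordering automatically produces the decreasing composition $\mathcal{I}_{\vec{p}}$. (Your fallback induction $E_{\ell-1}=(\id-\mathcal{I}_{\ell-1})E_{\ell-2}$, splitting $\Gamma_{\ell-1,k}$ into chains with $\ell_{1}\le\ell-2$ and chains with $\ell_{1}=\ell-1$, is also sound and closes the same way.) What your approach buys is transparency: the product formula makes the sign bookkeeping and the appearance of strictly decreasing multi-indices self-evident, and it isolates a structural fact about the error propagation that the paper never writes down but which is the operator-level analogue of the nilpotency/Neumann-series argument the paper later uses for $(T^{\prime}_{L})^{-1}$ in Theorem \ref{thm:reformulation}. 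What the paper's route buys is that it works solely with the quantities defined in the algorithm, at the cost of the opaque nested-sum identity. One small caution: for $k=1$ the paper's definition of $\Gamma_{n,k}$ is degenerate as literally written ($\ell_{1}>\ell_{k}$ is vacuous only under the intended reading $\Gamma_{n,1}=\{(i):1\le i\le n\}$); your bijection uses this same intended reading, so you should state it explicitly.
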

\begin{proof}
The proof is by induction over $\ell$.
The case $\ell=1$ is obvious. Now we assume that the statement was true for all $k \le \ell$ and consider the case $\ell+1$. We compute by \eqref{eq:recursive:localinterpol}
\begin{align}
\label{eq:gamma_operator_proof}
s_{\ell+1}(f) = & \, \mathcal{I}_{\ell+1} (f - \sum_{j = 1}^{\ell} s_{j}(f)) \nonumber \\
        = & \, \mathcal{I}_{\ell+1} (f - \sum_{j=2}^{\ell} \mathcal{I}_{j} (\id + \sum_{k = 1}^{j-1} \sum_{\vec{p} \in \Gamma_{j-1, k}} (-1)^k \, 
       \mathcal{I}_{\vec{p}}(f)) - \mathcal{I}_{1}(f)) \nonumber \\
        = & \, \mathcal{I}_{\ell+1} (\id - \sum_{j=2}^{\ell}\mathcal{I}_{j} (\id + \sum_{k = 1}^{j-1} \sum_{\vec{p} \in \Gamma_{j-1, k}} (-1)^k \, 
        \mathcal{I}_{\vec{p}}) -\mathcal{I}_{1})(f) \nonumber \\
        = & \, \mathcal{I}_{\ell+1} (\id - \sum_{j=2}^{\ell} \mathcal{I}_{j} - \sum_{j=2}^{\ell} \mathcal{I}_{j} \sum_{k = 1}^{j-1} \sum_{\vec{p} \in \Gamma_{j-1, k}} (-1)^k \, 
        \mathcal{I}_{\vec{p}} - \mathcal{I}_{1})(f).
\end{align}
Next, we use the identity
\begin{equation*}
    \sum_{j = 2}^{\ell} \mathcal{I}_{j} \sum_{k = 1}^{j-1} \sum_{\vec{p} \in \Gamma_{j-1, k}} (-1)^k \, \mathcal{I}_{\vec{p}} = 
    \sum_{k = 2}^{\ell} \sum_{\vec{p} \in \Gamma_{\ell, k}} (-1)^{k-1} \, \mathcal{I}_{\vec{p}},
\end{equation*}
where on the rhs we have alternated sum of concatenated operators with length up to $\ell$, on the lhs inside the sum we apply the operator $\mathcal{I}_{j}$ to an object of the same form as the rhs but with length up to $j-1$.  
and notice that $\sum_{\vec{p} \in \Gamma_{\ell, 1}}\mathcal{I}_{\vec{p}} = \sum_{i =1}^{\ell} \mathcal{I}_{i}$.
Finally applying the observations in \eqref{eq:gamma_operator_proof}, gives:
\begin{align*}
s_{\ell+1} = & \, \mathcal{I}_{\ell+1}\left(\id - \sum_{j=2}^{\ell} \mathcal{I}_{j} - \sum_{j=2}^{\ell} \mathcal{I}_{j} \sum_{k = 1}^{j-1} \sum_{\vec{p} \in \Gamma_{j-1, k}} (-1)^{k-1} \, 
\mathcal{I}_{\vec{p}}  -\mathcal{I}_{1}\right) \\
        = & \, \mathcal{I}_{\ell+1} \left(\id - \sum_{j=1}^{\ell} \mathcal{I}_{j} + \sum_{k = 2}^{\ell} \sum_{\vec{p} \in \Gamma_{\ell, k}} (-1)^{k} \, \mathcal{I}_{\vec{p}}\right) \\
        = & \, \mathcal{I}_{\ell+1} \left(\id + \sum_{k = 1}^{\ell} \sum_{\vec{p} \in \Gamma_{\ell, k}} (-1)^{k} \, \mathcal{I}_{\vec{p}}\right) .
\end{align*}
This finishes the proof.
\end{proof}
From \cite[Theorem 1]{Wendland2010}, we have an error bound of the following form:  
\begin{theorem}\label{thm:errorbound}
    Let $f \in H^{\tau}(\Omega)$. Under Assumptions \ref{ass:kernel} and \ref{ass:pointset}, there exists a constant $C_{1}=C_{1}(\gamma)$ such that 
    \begin{equation}\label{eq:errorbound}
    	\left\| f - \sum_{\ell=1}^{L}s_{\ell} (f) \right\|_{L_{2}(\Omega)} \le C_{2}\left(C_{1}  \mu^{\tau} \right)^{L} \|f\|_{H^{\tau}(\Omega)}
    \end{equation}
    for all $L\in \N$.
\end{theorem}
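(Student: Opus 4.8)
This statement is essentially \cite[Theorem 1]{Wendland2010}, so my first move is not to reprove it from scratch but to verify that Assumptions \ref{ass:kernel} and \ref{ass:pointset} supply exactly the hypotheses required there. Concretely, I would check that \eqref{eq:kernelfourier} renders $\mathcal{H}_{\Phi_\ell}$ norm-equivalent to $H^\tau(\R^d)$, that the support condition together with the scaling \eqref{eq:deltadef} couples the kernel support to the fill distance on every level, and that the geometric decay $h_{\ell+1}\le\mu h_\ell$, the quasi-uniformity \eqref{eq:quasiuniform} and the Lipschitz property of $\Omega$ are precisely the geometric ingredients of the cited result. Once this matching is in place, \eqref{eq:errorbound} follows by invocation.

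For completeness I would reconstruct the mechanism as a contraction in the scaled native norm. There are two ingredients. First, a sampling inequality (zeros lemma) on the Lipschitz domain $\Omega$: any $u\in H^\tau(\Omega)$ vanishing on $X_{N(\ell)}$ obeys $\|u\|_{L_2(\Omega)}\le C h_\ell^\tau\,|u|_{H^\tau(\Omega)}$. Second, the stability of the local step: since $\mathcal{I}_\ell$ is the orthogonal projection in $\mathcal{H}_{\Phi_\ell}$ onto $W_\ell$, we have $\|e_\ell\|_{\Phi_{\delta_\ell}}=\|(\id-\mathcal{I}_\ell)e_{\ell-1}\|_{\Phi_{\delta_\ell}}\le\|e_{\ell-1}\|_{\Phi_{\delta_\ell}}$. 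Using the Fourier equivalence \eqref{eq:deltafourier}, the choice $\delta_\ell=\nu h_\ell$ and the ratio $\delta_\ell/\delta_{\ell-1}\asymp\mu$, I would combine these with a Bernstein-type inverse inequality into a single per-level estimate
\[ \|e_\ell\|_{\Phi_{\delta_\ell}}\le C_1\,\mu^\tau\,\|e_{\ell-1}\|_{\Phi_{\delta_{\ell-1}}}. \]
Telescoping over $\ell=1,\dots,L$, starting from $e_0=f$ and passing back to $L_2$ via the uniform embeddings $\|f\|_{\Phi_{\delta_1}}\le C\|f\|_{H^\tau(\Omega)}$ and $\|\cdot\|_{L_2(\Omega)}\le C\|\cdot\|_{\Phi_{\delta_L}}$, then produces \eqref{eq:errorbound} with $C_2$ absorbing the two embedding constants.

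The main obstacle is exactly the per-level contraction displayed above, i.e.\ showing that the finer-scale native norm of the coarse residual $e_{\ell-1}$ gains the factor $\mu^\tau$ over its coarse-scale native norm. A naive Fourier comparison of $(1+\delta_\ell^2\|\vec\omega\|_2^2)^\tau$ against $(1+\delta_{\ell-1}^2\|\vec\omega\|_2^2)^\tau$ only gains $\mu^\tau$ on the high-frequency part, so the argument must exploit that $e_{\ell-1}$ vanishes on $X_{N(\ell-1)}$ through the sampling inequality, and on the bounded domain it must first pass through a norm-preserving Sobolev extension to $\R^d$ before the Fourier-side estimates apply. It is precisely this interplay of extension, sampling and inverse inequality that generates the constant $C_1=C_1(\gamma)$ and motivates the convergence condition $C_1\mu<1$ recorded in Assumption \ref{ass:pointset}, under which the factor $C_1\mu^\tau$ in \eqref{eq:errorbound} becomes a genuine contraction and the approximation error decays geometrically as $L\to\infty$.
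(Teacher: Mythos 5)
Your proposal is correct and matches the paper exactly: the paper gives no independent proof of this theorem but simply imports it as \cite[Theorem 1]{Wendland2010}, which is precisely your primary move of matching Assumptions \ref{ass:kernel} and \ref{ass:pointset} to the hypotheses there and invoking the result. Your supplementary reconstruction (zeros lemma on the Lipschitz domain, native-space projection stability of $\mathcal{I}_\ell$, Sobolev extension plus frequency splitting yielding the per-level factor $C_1\mu^{\tau}$, then telescoping with uniform embeddings) faithfully describes the mechanism of the cited proof, so nothing further is required.
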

This motivates the assumption $C_1 \mu^{\tau}<1$ in Assumption \ref{ass:pointset}.
Moreover, we can highlight the dependence on the number of points
\begin{corollary}\label{cor:errorbound}
    Suppose that Assumption \ref{ass:pointset} and of Theorem \ref{thm:errorbound} hold. Then, we have the estimate
    \begin{equation}
        \left\| f - \sum_{\ell=1}^{L}s_{\ell} (f) \right\|_{L_{2}(\Omega)} \le C_{2} C_{1}^{L} c_{q}^{\tau} \left(\frac{N(1)}{N}\right)^{\frac{\tau}{d}} \|f\|_{H^{\tau}(\Omega)},
    \end{equation}
    where we recall $N=\sum_{\ell=1}^{L}N(\ell)$.
\end{corollary}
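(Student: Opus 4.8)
The plan is to combine the error estimate of Theorem~\ref{thm:errorbound} with the point-count bound \eqref{eq:Nmu2}; the only genuinely new ingredient is a conversion of the geometric decay rate $\mu^{\tau L}$ into the ratio $(N(1)/N)^{\tau/d}$. Writing $(C_1\mu^{\tau})^L=C_1^L\mu^{\tau L}$ in \eqref{eq:errorbound}, it suffices to establish the scalar inequality
\begin{equation*}
   \mu^{\tau L}\le c_q^{\tau}\left(\frac{N(1)}{N}\right)^{\frac{\tau}{d}},
\end{equation*}
because substituting it into the bound of Theorem~\ref{thm:errorbound} reproduces the claim verbatim. Raising both sides to the power $d/\tau$ shows that this is equivalent to
\begin{equation*}
   N\le c_q^{d}\,N(1)\,\mu^{-dL}.
\end{equation*}

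First I would bound the total number of points by summing the per-level estimate \eqref{eq:Nmu2}. Using the convention $N(0)=N(1)\mu^{d}$ one has $N(0)\mu^{-d\ell}=N(1)\mu^{-d(\ell-1)}$, so that
\begin{equation*}
   N=\sum_{\ell=1}^{L}N(\ell)\le c_q^{d}\sum_{\ell=1}^{L}N(0)\mu^{-d\ell}
      = c_q^{d}N(1)\sum_{m=0}^{L-1}\left(\mu^{-d}\right)^{m}.
\end{equation*}
What remains is to control this finite geometric sum, whose ratio $\mu^{-d}$ exceeds one, so that the sum is dominated by its largest term $\mu^{-d(L-1)}$; quantitatively one uses the closed form $\sum_{m=0}^{L-1}(\mu^{-d})^{m}=(\mu^{-dL}-1)/(\mu^{-d}-1)$.

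The crux is to absorb the geometric prefactor into the leading power $\mu^{-dL}$ without creating an extra constant, and this is exactly where Assumption~\ref{ass:pointset} enters through \eqref{ass:eq:chmu}: from $\mu^{-d}>2$ we obtain $\mu^{-d}-1>1$, whence $\sum_{m=0}^{L-1}(\mu^{-d})^{m}<\mu^{-dL}$. This yields $N\le c_q^{d}N(1)\mu^{-dL}$, which is the equivalent inequality from the first paragraph, and the corollary follows by back-substitution into Theorem~\ref{thm:errorbound}. I expect the only delicate points to be the index shift and the bookkeeping between $N(0)$ and $N(1)$; it is worth noting that the hypothesis $\mu^{-d}>2$ is precisely calibrated so that no surviving factor is left behind, which explains why it is imposed as a standing assumption rather than being hidden inside the constant $C_2$.
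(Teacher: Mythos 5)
Your proposal is correct and follows essentially the same route as the paper: both reduce the claim to the inequality $N\le c_q^{d}N(1)\mu^{-dL}$, prove it by summing the per-level bound \eqref{eq:Nmu2} into a geometric series, and use $\mu^{-d}>2$ from \eqref{ass:eq:chmu} to absorb the factor $(\mu^{-d}-1)^{-1}$ before back-substituting into \eqref{eq:errorbound}. The only difference is cosmetic (your index shift to a sum starting at $m=0$ versus the paper's direct telescoping), plus the paper additionally records the weaker bound with an extra factor $L^{\tau/d}$ that would hold if $\mu^{-d}<2$, which is not needed for the statement as given.
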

\begin{proof}
    We recall $N(0)=N(1) \mu^{d}$. We get
    \begin{align*}
        N &= \sum_{\ell=1}^{L} N(\ell) \le N(0)c_{q}^{d} \sum_{\ell=1}^{L} \mu^{-d\ell} \le N(0)c_{q}^{d} \frac{1-\mu^{-d(L+1)} -(1-\mu^{-d})}{1-\mu^{-d}}=  N(0)c_{q}^{d} \mu^{-d} \frac{1-\mu^{-dL} }{1-\mu^{-d}}\\
       &\le c_{q}^{d}N(1)\frac{\mu^{-dL} -1}{\mu^{-d}-1}\le c_{q}^{d}N(1)\frac{\mu^{-dL} }{\mu^{-d}-1}\le c_{q}^{d}N(1)\mu^{-dL},
    \end{align*}
    as Assumption \ref{ass:pointset} and in particular \eqref{ass:eq:chmu} when $\mu^{-d}>2$ holds. 
    Then, it follows
    \begin{equation*}
        \mu^{\tau L} \le \left(\frac{N(1)c_{q}^{d} }{N}\right)^{\frac{\tau}{d}} = c_{q}^{\tau} \left(\frac{N(1)}{N}\right)^{\frac{\tau}{d}}.
    \end{equation*}
    On the other hand, for $\mu^{-d}<2$ we get
    \begin{equation*}
        N \le c_{q}^{d} N(1) L \mu^{-dL},
    \end{equation*}
    and therefore
    \begin{equation*}
                \mu^{\tau L} \le c_{q}^{\tau} \left(\frac{L N(1)}{N}\right)^{\frac{\tau}{d}}.
    \end{equation*}
    Inserting this into \eqref{eq:errorbound} yields the assertion. 
\end{proof}
In our settings, we have $2\tau = d+2k+1$, which implies that $\tau  > d/2$ for every choice of Wendland function. Furthermore, Corollary \ref{cor:errorbound} highlights how the approximation converges as the number of points increase and that the rate of convergence depends on the smoothness of the RBF.

\section{Matrix formulation}\label{sec:matrixformulation}
In this section, we fix bases in the spaces $W_{\ell}$, thus we can represent each $s_{\ell}$ via coefficients with respect to that basis.
The coefficients can be obtained by solving a linear system. This will now be discussed in more detail.

We represent the operators $S_{X_{N(\ell_2)}} S_{X_{N(\ell_1)}}^{*}:\R^{N(\ell_1)} \to \R^{N(\ell_2)}$ by matrices, i.e., we have
\begin{equation*}
	S_{X_{N(\ell_1)}} S_{X_{N(\ell_2)}}^{*} \cong  B_{\ell_2,\ell_1}:= \left(\Phi_{\ell_1}(\vec{x}^{(\ell_2)}_{n_2} -\vec{x}^{(\ell_1)}_{n_1}) \right)_
	{\genfrac{}{}{0pt}{}{1\le n_1 \le N(\ell_1)}{1\le n_2 \le N(\ell_2)}} \in \R^{N(\ell_2) \times N(\ell_1)}
\end{equation*}
We will use the short-hand notation $A_{\ell}=B_{\ell, \ell}$, $1\le \ell \le L$ and note that the $A_{\ell}$ are symmetric positive definite matrices as the kernel is (strictly) positive definite.
We obtain the identity
\begin{equation}
    s_{\ell} (f)= \sum_{n=1}^{N(\ell)} \alpha_{n}^{(\ell)} \Phi_{\ell}(\cdot, \vec{x}_{n}^{(\ell)}), 
\end{equation}
where the coefficients $\vec{\alpha}^{(\ell)} \in \R^{N(\ell)}$ satisfy 
%with $A_{\ell}=\left(\Phi_{\ell}(\vec{x}^{(\ell)}_{n} -\vec{x}^{(\ell)}_{\tilde{n}}) \right)_{1\le n,\tilde{n} \le N(\ell)} \in \R^{N(\ell) \times N(\ell)}$ 
the system of linear equations
\begin{equation}
    A_{\ell} \vec{\alpha}^{(\ell)} = \mathbf{f}^{(\ell)} - \sum_{k=1}^{\ell-1} B_{\ell k} \vec{\alpha}^{(k)}, \quad \mathbf{f}^{(\ell)}: = f|_{X_{\ell}} \in \R^{N(\ell)}, 
     \label{eq:mas}
\end{equation}
which is the matrix-analogue of \eqref{eq:recursive:localinterpol}.
Collecting the equations above allow us to represent the whole process as the solution of a linear system.

Thus, we obtain for the approximation in Theorem \ref{thm:errorbound} the representation
\begin{equation} \label{eq:fapproximation}
	 f_{L} = \sum_{\ell=1}^{L}s_{\ell} (f) =  \sum_{\ell=1}^{L}  \sum_{n=1}^{N(\ell)} \alpha_{n}^{(\ell)} \Phi_{\ell}(\cdot, \vec{x}_{n}^{(\ell)}),
\end{equation}
where all coefficients can be obtained by solving the following block-lower-triangular linear system 
\begin{equation}
    \left(\begin{array}{ccccc}
    A_{1} & & & & \\
    B_{21} \, & A_{2} \hfill  & & & \\
    B_{31} \, & B_{32} \, & A_{3} \hfill & & \\
    \vdots \, & \vdots \, & \cdots \, & \ddots & \\
    B_{L 1} \, & B_{L2} \, & B_{L3} \, & \cdots \, & A_{L} \\
\end{array}\right) \, \left(\begin{array}{c}
    \bm\alpha^{(1)} \\
    \bm\alpha^{(2)} \\
    \bm\alpha^{(3)} \\
    \vdots \\
    \bm\alpha^{(L)}
\end{array}\right) = \left(\begin{array}{c}
    \mathbf{f}^{(1)} \\
    \mathbf{f}^{(2)} \\
    \mathbf{f}^{(3)} \\
    \vdots \\
    \mathbf{f}^{(L)}
\end{array}\right).
    \label{eq:bigt}
\end{equation}
We consider a factorisation of the lower triangular matrix into 
\begin{align}
    T_{L} & = 
    \left(\begin{array}{ccccc}
    A_{1} & & & & \\
    B_{21} \, & A_{2} \hfill  & & & \\
    B_{31} \, & B_{32} \, & A_{3} \hfill & & \\
    \vdots \, & \vdots \, & \cdots \, & \ddots & \\
    B_{L1} \, & B_{L2} \, & B_{L3} \, & \cdots \, & A_{L} \\
    \end{array}\right) =\nonumber \\
    & 
    \underbrace{
    \left(\begin{array}{ccccc}
    \mathbb{1}_{N(1)} & & & & \\
    B_{21} A_{1}^{-1} \, & \mathbb{1}_{N(2)}  & & & \\
    B_{31}A_{1}^{-1} \, & B_{32}A_{2}^{-1} \, & \mathbb{1}_{N(3)} & & \\
    \vdots \, & \vdots \, & \cdots \, & \ddots & \\
    B_{L1} A_{1}^{-1} \, & B_{L2} A_{2}^{-1} \, & B_{L3} A_{3}^{-1} \, & \cdots \, & \mathbb{1}_{N(L)} \\
    \end{array}\right)
    }_{=:T^{\prime}_{L}}
    \underbrace{
    \left(\begin{array}{ccccc}
    A_{1} & & & & \\
    & A_{2} \hfill  & & & \\
    & \, & A_{3} \hfill & & \\
    & \, & \, & \ddots & \\
    & \, & \, & \, & A_{L} \\
    \end{array}\right)
    }_{=:D_{L}},
    \label{eq:matrix_decomposition}
\end{align}
where $B_{kj} A_{j}^{-1} \in \R^{N(k)}\times\R^{N(j)}$ and $T^{\prime}_{L}, \, D_{L} \in \R^{N}\times\R^{N}$ with $N:=\sum_{\ell=1}^{L} N(\ell)$. 
We define the cardinal functions 
\begin{align}\label{eq:cardinal}
\chi^{(\ell)}_{n} \in W_{\ell} \quad \text{by the Lagrange conditions }\chi^{(\ell)}_{n}(\vec{x}^{(\ell)}_{m}) = \delta_{n,m} \text{ for all } 1\le n,m\le N(\ell).
\end{align} 
We observe the identity
\begin{equation*}
A_{\ell} 
\left(\begin{array}{c}
    \chi_{1}^{(\ell)}(\vec{x}) \\
    \chi_{2}^{(\ell)}(\vec{x}) \\
    \vdots \\
    \chi_{N(\ell)}^{(\ell)}(\vec{x})
\end{array}\right) = 
\left(\begin{array}{c}
    \Phi_{\ell}(\vec{x}, \vec{x}^{(\ell)}_{1}) \\
    \Phi_{\ell}(\vec{x}, \vec{x}^{(\ell)}_{2}) \\
    \vdots \\
    \Phi_{\ell}(\vec{x}, \vec{x}^{(\ell)}_{N(\ell)})
\end{array}\right) 
\end{equation*}
from which it follows that
\begin{equation}
    \chi_{i}^{(\ell)}(\vec{x}) = \sum\nolimits_{k=1}\nolimits^{N(\ell)} A^{-1}_{ik} \Phi_{\ell}(\vec{x}, \vec{x}^{(\ell)}_{k}) = \sum\nolimits_{k: \|\vec{x}-\vec{x}^{(\ell)}_{k}\|_{2} \le \delta_{\ell}} 
    A^{-1}_{ik} \Phi_{\ell}(\vec{x}, \vec{x}^{(\ell)}_{k}), \quad \text{for all }\vec{x} \in \R^{d}
    \label{eq:chi}
\end{equation}
holds. Thus, we get 
\begin{equation*}
    \left(B_{k\ell}A_{\ell}^{-1}\right)_{j, i} = \sum\nolimits_{h=1}\nolimits^{N(\ell)} A^{-1}_{ih} \Phi_{\ell}(\vec{x}^{(k)}_{j}, \vec{x}^{(\ell)}_{h}) = \chi^{\ell}_{i}(\vec{x}^{(k)}_{\ell}),  \quad 1 \le j \le N(k), \, 1 \le i \le N(\ell).
\end{equation*}

In order to analyse the system further, we need an estimate on the condition number of the matrices which we need to invert.
We obtain
\begin{lemma}\label{lem:condA}
Let Assumptions \ref{ass:kernel} and \ref{ass:pointset} be satisfied and let $A_\ell$ be as in \eqref{eq:mas}, then
\begin{equation}
    \kappa_{2}(A_{\ell}) \le 4^{d} (C_{d}c_{\Phi})^{-1} \left(1 +4 M_{d}^{2} \nu^{2} c_{q}^{2}\right)^{\tau},
    \label{eq:condA}
\end{equation}
with constants $M_{d} = 12\left(\frac{\pi \Gamma^{2}(\frac{d}{2}+1)}{9}\right)^{\frac{1}{d+1}} $ and $C_{d} = \frac{1}{2\Gamma(\frac{d}{2}+1)} \left( \frac{M_d}{2^{\frac{3}{2}}} \right)^d$,
i.e. the condition number is bounded by a constant independent of $\ell$.
\end{lemma}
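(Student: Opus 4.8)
The plan is to use that $A_\ell$ is symmetric positive definite, so that $\kappa_2(A_\ell)=\lambda_{\max}(A_\ell)/\lambda_{\min}(A_\ell)$, and to bound the two extremal eigenvalues separately. The single structural observation that will make the final bound independent of $\ell$ is that the scale, the fill distance and the separation distance are all mutually comparable: by \eqref{eq:deltadef} we have $\delta_\ell=\nu h_\ell$, and by \eqref{eq:quasiuniform} we have $q_\ell\le h_\ell\le c_q q_\ell$, so that $\delta_\ell/q_\ell\le\nu c_q$ uniformly in $\ell$. Since $\Phi_{\delta_\ell}$ carries an overall scalar factor $\delta_\ell^{-d}$ to which the condition number is insensitive, all genuine $\ell$-dependence will cancel in the quotient and only the $\ell$-independent ratios $\nu$ and $c_q$ will survive.

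For the upper bound on $\lambda_{\max}(A_\ell)$ I would use Gershgorin's theorem together with the compact support of the kernel. Because $\mathrm{supp}(\Phi_{\delta_\ell})\subset B_{\delta_\ell}(0)$, the $i$-th row of $A_\ell$ only receives contributions from nodes $\vec{x}^{(\ell)}_j$ with $\|\vec{x}^{(\ell)}_i-\vec{x}^{(\ell)}_j\|_2\le\delta_\ell$; a standard volume/packing argument—the balls $B_{q_\ell}(\vec{x}^{(\ell)}_j)$ are pairwise disjoint and, for the contributing indices $j$, are all contained in $B_{\delta_\ell+q_\ell}(\vec{x}^{(\ell)}_i)$—bounds the number of such nodes by $(1+\delta_\ell/q_\ell)^d\le(1+\nu c_q)^d$. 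Each entry is bounded in modulus by $\delta_\ell^{-d}\|\Phi\|_\infty$, hence $\lambda_{\max}(A_\ell)\le\max_i\sum_j|\Phi_{\delta_\ell}(\vec{x}^{(\ell)}_i-\vec{x}^{(\ell)}_j)|$ is controlled by $\delta_\ell^{-d}$ times an $\ell$-independent constant; collecting this constant against the quasi-uniformity relations yields the prefactor $4^d$.

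The heart of the argument, and the step I expect to be the main obstacle, is the lower bound on $\lambda_{\min}(A_\ell)$. Here I would invoke the classical separation-distance lower bound for smallest eigenvalues of positive definite kernel matrices (the Narcowich–Ward estimate, in the form found in Wendland's monograph), which for a kernel with radially decreasing Fourier transform reads $\lambda_{\min}(A_\ell)\ge C_d\,q_\ell^{-d}\,\widehat{\Phi_{\delta_\ell}}(M_d/q_\ell)$ with precisely the constants $M_d$ and $C_d$ stated in the lemma. Inserting the lower Fourier bound from \eqref{eq:deltafourier}, $\widehat{\Phi_{\delta_\ell}}(\vec\omega)\ge c_\Phi(1+\delta_\ell^2\|\vec\omega\|_2^2)^{-\tau}$, evaluated at the relevant frequency radius and then using $\delta_\ell/q_\ell\le\nu c_q$, gives
\begin{equation*}
\lambda_{\min}(A_\ell)\ge C_d\,c_\Phi\,q_\ell^{-d}\bigl(1+4M_d^2\nu^2 c_q^2\bigr)^{-\tau},
\end{equation*}
where the factor $4$ reflects the precise frequency radius entering the estimate together with $\delta_\ell\le\nu c_q q_\ell$. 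Establishing this bound from scratch is the genuinely technical ingredient—it rests on the Fourier/quadrature machinery underlying the Narcowich–Ward construction, which is also exactly what fixes the constants $M_d$ and $C_d$—so I would import it as a known result rather than reprove it.

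Finally I would form the quotient. The common scale $q_\ell^{-d}$ (equivalently $\delta_\ell^{-d}$, up to the $\ell$-independent factor $(\delta_\ell/q_\ell)^d\le(\nu c_q)^d$) cancels, leaving
\begin{equation*}
\kappa_2(A_\ell)=\frac{\lambda_{\max}(A_\ell)}{\lambda_{\min}(A_\ell)}\le 4^d\,(C_d c_\Phi)^{-1}\bigl(1+4M_d^2\nu^2 c_q^2\bigr)^{\tau},
\end{equation*}
which is the claimed estimate. Since the right-hand side depends only on $d$, $\tau$, $c_\Phi$, $\nu$ and $c_q$, and not on $\ell$, the condition number is uniformly bounded across all levels, as asserted. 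The only remaining work is the careful bookkeeping of the dimensional constants, so that the packing factor from the upper bound and the scale-conversion factor $(\delta_\ell/q_\ell)^d$ combine to exactly $4^d$.
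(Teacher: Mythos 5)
Your proposal is correct and follows essentially the same route as the paper: the paper also obtains \eqref{eq:condA} by combining an upper bound $\|A_{\ell}\|_{2\to 2}\le \delta_{\ell}^{-d}\left(4\delta_{\ell}/q_{\ell}\right)^{d}$ (the packing/Gershgorin estimate you sketch) with the Narcowich--Ward-type lower bound $\|A_{\ell}^{-1}\|_{2\to 2}\le q_{\ell}^{d}(C_{d}c_{\Phi})^{-1}\left(1+4M_{d}^{2}\delta_{\ell}^{2}/q_{\ell}^{2}\right)^{\tau}$, both imported from \cite{Wendland2010}, and then multiplying and invoking quasi-uniformity $\delta_{\ell}/q_{\ell}\le \nu c_{q}$ exactly as you do. The only difference is presentational: the paper cites both spectral bounds rather than re-deriving the Gershgorin step, and it works with $\kappa_{2}(A_{\ell})=\|A_{\ell}\|_{2\to 2}\|A_{\ell}^{-1}\|_{2\to 2}$ instead of the eigenvalue quotient, which for symmetric positive definite $A_{\ell}$ is the same thing.
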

\begin{proof}
Following the arguments from \cite{Wendland2010}
we obtain the bounds 
\begin{align}
    \|A_{\ell}^{-1}\|_{2\to 2}  &\le q_{\ell}^{d} (C_{d}c_{\Phi})^{-1} \left(1 +4 M_{d}^{2} \delta^{2}_{\ell}/q_{\ell}^{2}\right)^{\tau}\label{eq:Aelllower}\\
    \|A_{\ell}\|_{2\to 2} &\le \delta_{\ell}^{-d} \left( \frac{4 \delta_{\ell}}{q_{\ell}} \right)^{d} = \left( \frac{4}{q_{\ell}} \right)^{d}\label{eq:upper}.
\end{align}
Thus, we obtain for the (relative) condition number
\begin{align}
    \label{eq:condAnonquasiuniform}
    \kappa_{2}(A_{\ell}) & = \|A_{\ell}\|_{2\to 2} \|A_{\ell}^{-1}\|_{2\to 2} \le  4^{d} (C_{d}c_{\Phi})^{-1} \left(1 +4 M_{d}^{2} \delta^{2}_{\ell}/q_{\ell}^{2}\right)^{\tau}.
\end{align}
Note that \eqref{eq:condAnonquasiuniform} holds also non quasi-uniform point sets. Quasi-uniformity yields \eqref{eq:condA} and hence completes the proof.
\end{proof}
We obtain the following bound, which is essentially \cite[Theorem 2.3]{demarchi:wendland:2020} but with constants made explicit to our multilevel setting.
\begin{lemma}\label{lem:lagrangedecay}
Let Assumptions \ref{ass:kernel} and \ref{ass:pointset} be satisfied. Then, there is $\theta>0$ such that for all $1 \le i,j \le N(\ell))$ and all $1\le \ell \le L$, the bound
\begin{equation}
	\label{eq:offdiagonaldecay}
    \left|\left(A_{\ell}^{-1}\right)_{i, j}\right| \le 2(C_{d}c_{\Phi})^{-1} \left(1 +4 M_{d}^{2} \nu^{2} c_{q}^{2}\right)^{\tau} e^{2 \theta \sqrt{d}} q_{\ell}^{d} e^{-\theta \|\vec{x}^{(\ell)}_{i}-\vec{x}^{(\ell)}_{j}\|_{2}/q_{\ell}}
\end{equation}
holds.
Additionally, we have for the Lagrange basis defined in \eqref{eq:chi}
\begin{equation}
    \left| \chi_{i}^{(\ell)}(\vec{x})\right| \le 2(C_{d}c_{\Phi})^{-1} \left(1 +4 M_{d}^{2} \nu^{2} c_{q}^{2}\right)^{\tau} e^{2 \theta \sqrt{d}} e^{c_{q} \nu \theta} (1+\nu c_{q})^{d}e^{-\theta\|\vec{x}^{(\ell)}_{i}-\vec{x}\|_{2}/q_{\ell}}, \quad \vec{x} \in \R^{d},
    \label{eq:chi_bound}
\end{equation}
where the constants $C_{d},M_d$ stem from Lemma \ref{lem:condA}.
\end{lemma}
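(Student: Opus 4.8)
The plan is to treat the two estimates in sequence: first the entrywise decay of $A_\ell^{-1}$, i.e.\ \eqref{eq:offdiagonaldecay}, and then the Lagrange bound \eqref{eq:chi_bound}, which I would read off from the representation \eqref{eq:chi}. The whole argument rests on the observation that the compact support of $\Phi$ makes $A_\ell$ sparse: since $(A_\ell)_{ij}=\Phi_{\ell}(\vec{x}^{(\ell)}_i-\vec{x}^{(\ell)}_j)$ vanishes whenever $\|\vec{x}^{(\ell)}_i-\vec{x}^{(\ell)}_j\|_2>\delta_\ell$, the matrix is banded with respect to the connectivity graph $G_\ell$ on $X_{N(\ell)}$ whose edges join points at distance at most $\delta_\ell$. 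The inverse of a sparse symmetric positive definite matrix decays exponentially in the associated graph distance, at a rate governed only by its condition number; this is the Demko--Moss--Smith phenomenon underlying \cite[Theorem 2.3]{demarchi:wendland:2020}, and I would follow that line with all constants tracked explicitly.

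To make the decay quantitative, write $d_{G_\ell}(i,j)$ for the graph distance. Sparsity gives $(A_\ell^k)_{ij}=0$ whenever $d_{G_\ell}(i,j)>k$, so for any polynomial $p_k$ of degree $k$ the matrix $p_k(A_\ell)$ inherits the same vanishing pattern past graph distance $k$. Choosing $p_k$ as the near-best uniform approximation of $t\mapsto 1/t$ on the spectral interval $[\lambda_{\min}(A_\ell),\lambda_{\max}(A_\ell)]$, Chebyshev theory yields $\|A_\ell^{-1}-p_k(A_\ell)\|_{2}\le C(\kappa)\,\lambda_{\min}(A_\ell)^{-1}\rho^{k}$ with $\rho=(\sqrt{\kappa}-1)/(\sqrt{\kappa}+1)$ and $\kappa=\kappa_2(A_\ell)$. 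Hence, whenever $d_{G_\ell}(i,j)>k$, we have $|(A_\ell^{-1})_{ij}|=|(A_\ell^{-1}-p_k(A_\ell))_{ij}|\le C(\kappa)\,\|A_\ell^{-1}\|_{2}\,\rho^{k}$, and optimizing over $k\approx d_{G_\ell}(i,j)$ gives $|(A_\ell^{-1})_{ij}|\le C(\kappa)\,\|A_\ell^{-1}\|_{2}\,e^{-\theta_0 d_{G_\ell}(i,j)}$ with $\theta_0=-\log\rho>0$. Two inputs then produce \eqref{eq:offdiagonaldecay}: Lemma \ref{lem:condA} bounds $\kappa_2(A_\ell)$ uniformly in $\ell$, so $C(\kappa)$, $\rho$ and $\theta_0$ are level-independent, while \eqref{eq:Aelllower} gives $\|A_\ell^{-1}\|_{2}\le q_\ell^{d}(C_dc_\Phi)^{-1}(1+4M_d^2\nu^2c_q^2)^{\tau}$ and thus supplies the prefactor. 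Finally a geometric step converts graph distance to Euclidean distance: a path of $k$ edges spans at most $k\delta_\ell$, so $d_{G_\ell}(i,j)\ge\|\vec{x}^{(\ell)}_i-\vec{x}^{(\ell)}_j\|_2/\delta_\ell$, and since $\delta_\ell=\nu h_\ell\le\nu c_q q_\ell$ I would absorb the ratio $\delta_\ell/q_\ell$ into a rescaled rate $\theta=\theta_0/(\nu c_q)$; the slack between $k$ and $d_{G_\ell}(i,j)$ and the shift needed to cover the near-diagonal block yield the remaining factor $e^{2\theta\sqrt d}$.

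For the Lagrange bound I would insert \eqref{eq:chi}, which already restricts the sum to indices $k$ with $\|\vec{x}-\vec{x}^{(\ell)}_k\|_2\le\delta_\ell$. On each such term I bound the kernel by $|\Phi_\ell(\vec{x},\vec{x}^{(\ell)}_k)|\le\delta_\ell^{-d}\|\Phi\|_\infty$, count the active indices by a packing argument (the points are $q_\ell$-separated and lie in a ball of radius $\delta_\ell\le\nu c_q q_\ell$, giving at most $O((1+\nu c_q)^d)$ of them), and substitute the decay \eqref{eq:offdiagonaldecay} for $(A_\ell^{-1})_{ik}$. The only genuinely new manoeuvre is to move the decay from $\|\vec{x}^{(\ell)}_i-\vec{x}^{(\ell)}_k\|_2$ to $\|\vec{x}^{(\ell)}_i-\vec{x}\|_2$: the triangle inequality gives $\|\vec{x}^{(\ell)}_i-\vec{x}^{(\ell)}_k\|_2\ge\|\vec{x}^{(\ell)}_i-\vec{x}\|_2-\delta_\ell\ge\|\vec{x}^{(\ell)}_i-\vec{x}\|_2-\nu c_q q_\ell$, so that $e^{-\theta\|\vec{x}^{(\ell)}_i-\vec{x}^{(\ell)}_k\|_2/q_\ell}\le e^{c_q\nu\theta}e^{-\theta\|\vec{x}^{(\ell)}_i-\vec{x}\|_2/q_\ell}$, which is the source of the factor $e^{c_q\nu\theta}$. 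Collecting prefactors — the $q_\ell^d$ from the inverse bound combines with $\delta_\ell^{-d}$ up to powers of $\nu c_q$, and the packing count contributes $(1+\nu c_q)^d$ — reproduces \eqref{eq:chi_bound}.

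The main obstacle is the quantitative exponential-decay estimate for $A_\ell^{-1}$ together with its uniformization in $\ell$: the Chebyshev/Demko--Moss--Smith argument must be carried out so that both the rate $\theta$ and the leading constant stay independent of the level, which is precisely what the level-uniform condition number of Lemma \ref{lem:condA} buys us. Everything downstream — the graph-to-Euclidean conversion, the packing count, and the triangle-inequality shift for the Lagrange functions — is geometric bookkeeping, delicate only insofar as one must match the precise constants $e^{2\theta\sqrt d}$, $e^{c_q\nu\theta}$ and $(1+\nu c_q)^d$ claimed in the statement.
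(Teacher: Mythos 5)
Your proposal is correct and is essentially the paper's own proof: the paper obtains \eqref{eq:offdiagonaldecay} by invoking the decay estimate of \cite[Theorem 2.3]{demarchi:wendland:2020} together with \cite[Theorem 3.1]{baxter:etal:1994} (the same Demko--Moss--Smith/Chebyshev argument you reconstruct, executed there via a lattice discretization of the points, which is where the factor $e^{2\theta\sqrt{d}}$ originates), inserts the bound \eqref{eq:Aelllower} on $\|A_{\ell}^{-1}\|_{2\to 2}$, and makes the rate level-independent exactly as you do, through the monotonicity of $\kappa\mapsto(\sqrt{\kappa}-1)/(\sqrt{\kappa}+1)$ combined with Lemma \ref{lem:condA}. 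Your derivation of \eqref{eq:chi_bound} (restricting the sum in \eqref{eq:chi} by the kernel support, a packing count giving $(1+\nu c_{q})^{d}$, and the triangle-inequality shift producing $e^{c_{q}\nu\theta}$) is likewise the argument the paper imports from \cite{demarchi:wendland:2020}, so the only real difference is that you inline, with connectivity-graph bookkeeping, what the paper cites as a black box.
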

\begin{proof}
From \cite[Theorem 2.3]{demarchi:wendland:2020} and also \cite[Theorem 3.1]{baxter:etal:1994}
we get for all $1\le i,j \le N(\ell)$
\begin{equation}\label{eq:demarchi}
	\left| (A^{-1}_{\ell})_{i,j} \right|  \le 2 \|A^{-1}_{\ell}\|_{2\to 2} \mu^{\| \vec{y}_{i} -\vec{y}_{j}\|_2},
\end{equation}
with $R:= \sqrt{d}\max\{c_{q}\nu, 4\}$ we have
\begin{align*}
\mu:= \left(\frac{\sqrt{\kappa_{2}(A_{\ell})}-1}{\sqrt{\kappa_{2}(A_{\ell})}+1}\right)^{\frac{1}{R}} \quad \text{ and } \quad 
\vec{y}_{j}:=\left(\lfloor  \frac{\sqrt{d}}{2 q_{\ell}} x^{(\ell)}_{j}[1]\rfloor, \dots , \lfloor  \frac{\sqrt{d}}{2 q_{\ell}} x^{(\ell)}_{j}[d]\rfloor \right)^{T}
\end{align*}
for $x^{(\ell)}_{j}=\left(x^{(\ell)}_{j}[1],\dots,x^{(\ell)}_{j}[d] \right)^{T}$.
Using \eqref{eq:Aelllower}, \eqref{eq:quasiuniform}, \eqref{eq:deltadef} and \eqref{eq:demarchi} yields
\begin{align*}
   \left| (A^{-1}_{\ell})_{i,j} \right|  \le 2(C_{d}c_{\Phi})^{-1} \left(1 +4 M_{d}^{2} \nu^{2} c_{q}^{2}\right)^{\tau} q_{\ell}^{d} e^{-\theta_{\ell}\|\vec{x}_{i}-\vec{x}_{j}\|_{2}/q_{\ell}} e^{2\theta_{\ell}\sqrt{d}},
\end{align*}
where
\begin{equation*}
    \theta_{\ell}:= -\frac{1}{2R} \log{\left(\frac{\sqrt{\kappa_{2}(A_{\ell})}-1}{\sqrt{\kappa_{2}(A_{\ell})}+1}\right)} \quad \ge 0.
\end{equation*}
This is well-defined as $\kappa_{2}(A_{\ell}) >1$ thus $0<\frac{\sqrt{\kappa_{2}(A_{\ell})}-1}{\sqrt{\kappa_{2}(A_{\ell})}+1}<1$.
We point out that 
\begin{equation*}
	t: (1,\infty) \to \R, \quad x \mapsto \log \frac{\sqrt{x}-1}{\sqrt{x}+1}
\end{equation*} 
is increasing as composition of the increasing functions $\log x $, $s(x) =(x-1)/(x+1)$ with $s^{\prime}(x)=2(x+1)^{-2}>0$ and $\sqrt x$. 
Thus, we get a uniform upper bound
\begin{equation}
	\label{eq:theta}
		\theta:= -\frac{1}{2R} \log{\left(\frac{\sqrt{z }-1}{\sqrt{z}+1}\right)}, \quad z:=4^{d} (C_{d}c)^{-1} \left(1 +4 M_{d}^{2} \nu^{2} c_{q}^{2}\right)^{\tau}
\end{equation}
using Lemma \ref{lem:condA}. It then follows as in \cite{demarchi:wendland:2020} that
\begin{align*}
    \left|\chi_{i}^{(\ell)}(\vec{x})\right| & \le 2\|A_{\ell}^{-1}\|_{2} e^{2\theta_{\ell}\sqrt{d}} \|\Phi_{\delta}\|_{L^{\infty}(\R^{d})} e^{c_{q} \nu \theta_{\ell}} (1+\nu c_{q})^{d} e^{-\theta_{\ell}\|\vec{x}^{(\ell)}_{i}-\vec{x}\|_{2}/q_{\ell}}\\
    &\le  2 (C_{d}c_{\Phi})^{-1} \left(1 +4 M_{d}^{2} \nu^{2} c_{q}^{2}\right)^{\tau} e^{2 \theta \sqrt{d}} e^{c_{q} \nu \theta} (1+\nu c_{q})^{d}e^{-\theta\|\vec{x}^{(\ell)}_{i}-\vec{x}\|_{2}/q_{\ell}}.
\end{align*}
\end{proof}

We use the notation
\begin{equation}\label{eq:mathfrakXkell}
\mathfrak{X}_{k, \ell} := B_{k, \ell} A_{\ell}^{-1} = (\chi^{(\ell)}_{i}(\vec{x}^{(k)}_{j}))_{i = 1, ..., N(\ell); \, j = 1, ..., N(k)}, 
\end{equation}
from which we obtain the following representation for the matrix factor in \eqref{eq:matrix_decomposition}
\begin{equation}
T^{\prime}_{L} = 
\left(\begin{array}{ccccc}
    \id_{N(1)} & & & & \\
    \mathfrak{X}_{21} \, & \id_{N(2)}  & & & \\
    \mathfrak{X}_{31} \, & \mathfrak{X}_{32} \, &\id_{N(3)} & & \\
    \vdots \, & \vdots \, & \cdots \, & \ddots & \\
    \mathfrak{X}_{L1} \, & \mathfrak{X}_{L2} \, & \mathfrak{X}_{L3} \, & \cdots \, & \id_{N(L)} \\
    \end{array}\right).
    \label{eq:T_n_prime}
\end{equation}
We observe that
\begin{equation*}
	(\id_{N} -T^{\prime}_{L} )^{L} = 
\left(\begin{array}{ccccc}
    0_{N(1)} & & & & \\
    -\mathfrak{X}_{21} \, & 0_{N(2)}  & & & \\
    -\mathfrak{X}_{31} \, & -\mathfrak{X}_{32} \, & 0_{N(3)} & & \\
    \vdots \, & \vdots \, & \cdots \, & \ddots & \\
    -\mathfrak{X}_{L1} \, & -\mathfrak{X}_{L2} \, & -\mathfrak{X}_{L3} \, & \cdots \, &  0_{N(L)} \\
    \end{array}\right)^L=0,
\end{equation*}
i.e., $\id_{N} -T^{\prime}_{L} $ is nilpotent and hence 
\begin{equation}\label{eq:TinvNeumann}
	(T^{\prime}_{L})^{-1}=\sum_{\ell=0}^{L-1}(\id_{N} -T^{\prime}_{L} )^{\ell}.
\end{equation}

We define for a multi-index $\vec{p}=(p_1=k,\dots,p_m=j)^{T} \in P_{kj}^{(m)}$, where the set is defined in \eqref{eq:Pkjm}, with $p_t>p_s$ for $m\ge t>s\ge 1$, the expression
\begin{equation}
\label{eq:mathfrakXp}
\mathfrak{X}_{\vec{p}}:=\prod_{t=1}^{m-1}\mathfrak{X}_{p_{m-t+1},p_{m-t}}
\end{equation}
We can derive an explicit form of $(T^{\prime}_{L})^{-1}$, which resembles Theorem \ref{thm:gamma_operator_extended}.
\begin{theorem}\label{thm:reformulation}
We obtain the explicit formula for the inverse of the matrix factor in \eqref{eq:matrix_decomposition}
\begin{equation}
    \left(T^{\prime}_{L} \right)^{-1}_{kj} = 
    \begin{cases} \sum\nolimits_{\vec{p} \in P_{kj}} (-1)^{|\vec{p}|-1} \mathfrak{X}_{\vec{p}}, &k>j, \\ 
    \vec{0}, & k<j, \\
    \id,& k=j.
    \end{cases},
    \label{eq:explicit_inverse_T}
\end{equation}
using the notation \eqref{eq:mathfrakXp} and the index set from \eqref{eq:Pkj}.
Moreover, for a given $\vec{y}=\left( \vec{y}_1,\dots,\vec{y}_{L} \right)^{T}\in \R^{N} \cong \R^{N(1)} \times \dots \R^{N(L)}$
\begin{equation*}
T^{\prime-1}_{L} \vec{y} = 
\left(\begin{array}{c}
    \vec{y}_{1}\\
    - \mathfrak{X}_{21} \vec{y}_{1} + \vec{y}_{2} \\
    \sum_{\ell = 1}^{2} \sum_{\vec{p} \in P_{3 \ell}} (-1)^{|\vec{p}|} \mathfrak{X}_{\vec{p}} \vec{y}_{\ell} + \vec{y}_{3} \\
    \vdots \\
    \sum_{\ell = 1}^{L-1} \sum_{\vec{p} \in P_{L \ell}} (-1)^{|\vec{p}|} \mathfrak{X}_{\vec{p}} \vec{y}_{\ell} + \vec{y}_{L} \\
    \end{array}\right).
\end{equation*}

\end{theorem}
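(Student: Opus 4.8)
The plan is to obtain \eqref{eq:explicit_inverse_T} directly from the Neumann series \eqref{eq:TinvNeumann}, so that the entire statement reduces to reading off the block entries of the powers of $M := \id_N - T'_L$. By \eqref{eq:T_n_prime} this $M$ is strictly block-lower-triangular with $M_{k\ell} = -\mathfrak{X}_{k\ell}$ for $k>\ell$ and $M_{k\ell} = \vec{0}$ for $k \le \ell$; its nilpotency $M^L = 0$ is exactly what makes the series \eqref{eq:TinvNeumann} a finite sum and hence legitimate.

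First I would expand the $(k,j)$ block of the $p$-th power as an iterated block product,
\[
(M^p)_{kj} = \sum_{i_1,\dots,i_{p-1}} M_{k\,i_1} M_{i_1 i_2}\cdots M_{i_{p-1}\,j},
\]
and observe that, since each factor $M_{ab}$ vanishes unless $a>b$, only the strictly decreasing chains $k = i_0 > i_1 > \cdots > i_{p-1} > i_p = j$ survive. Each of the $p$ factors carries one minus sign, so
\[
(M^p)_{kj} = (-1)^p \sum_{k=i_0>\cdots>i_p=j} \mathfrak{X}_{i_0 i_1}\mathfrak{X}_{i_1 i_2}\cdots \mathfrak{X}_{i_{p-1} i_p}.
\]
Each surviving chain $(i_0,\dots,i_p)$ is precisely an element of $P_{kj}^{(p+1)}$, and the accompanying matrix product is $\mathfrak{X}_{\vec{p}}$ in the sense of \eqref{eq:mathfrakXp}. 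The one genuinely delicate point here is bookkeeping the orientation of this product: one has to check that the factors appear in the order running from the finest level $k$ down to the coarsest level $j$, so that adjacent blocks share their inner level index ($\mathfrak{X}_{ab}\in\R^{N(a)\times N(b)}$) and the product has the block shape $\R^{N(k)\times N(j)}$ demanded of the $(k,j)$ block. Once this orientation is fixed consistently with \eqref{eq:mathfrakXp}, the identification is immediate.

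Summing the series and re-indexing by the chain length $m := p+1$ then finishes the off-diagonal case: for $k>j$ the contributions come from $p = 1,\dots,k-j$, and with $(-1)^p = (-1)^{m-1} = (-1)^{|\vec{p}|-1}$ they collect to
\[
(T'_L)^{-1}_{kj} = \sum_{m=2}^{k-j+1} (-1)^{m-1}\sum_{\vec{p}\in P_{kj}^{(m)}}\mathfrak{X}_{\vec{p}} = \sum_{\vec{p}\in P_{kj}}(-1)^{|\vec{p}|-1}\mathfrak{X}_{\vec{p}},
\]
which is \eqref{eq:explicit_inverse_T}; the sum truncates at $m = k-j+1$ because no strictly decreasing chain from $k$ to $j$ can be longer, matching the union \eqref{eq:Pkj}. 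The diagonal blocks come from the $p=0$ term $M^0 = \id_N$, and the blocks with $k<j$ vanish since every power of the strictly lower-triangular $M$ is strictly lower-triangular.

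Finally, the closed form for $T'^{-1}_L\vec{y}$ is just \eqref{eq:explicit_inverse_T} applied row-by-row to $\vec{y} = (\vec{y}_1,\dots,\vec{y}_L)^T$, the $k$-th block being $\vec{y}_k + \sum_{\ell=1}^{k-1}\sum_{\vec{p}\in P_{k\ell}}(-1)^{|\vec{p}|-1}\mathfrak{X}_{\vec{p}}\vec{y}_\ell$. As an alternative to the Neumann route, one could instead verify $T'_L\,(T'_L)^{-1} = \id_N$ directly; the off-diagonal blocks then telescope to $\vec{0}$ by the same inclusion--exclusion over decreasing chains that drives Theorem \ref{thm:gamma_operator_extended}. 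I would nonetheless prefer the nilpotency argument, since there the cancellation is built into the series and needs no separate combinatorial identity.
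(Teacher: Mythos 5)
Your proposal is correct and follows essentially the same route as the paper: both expand the Neumann series $(T'_L)^{-1}=\sum_{t=0}^{L-1}(\id_N-T'_L)^t$ (justified by nilpotency of the strictly block-lower-triangular part) and identify the $(k,j)$ block of each power with signed products over strictly decreasing index chains in $P_{kj}^{(t+1)}$, the only cosmetic difference being that the paper establishes this chain formula by induction on $t$ while you read it off from the iterated block product directly. Your sign $(-1)^{|\vec{p}|-1}$ in the block formula for $T'^{-1}_L\vec{y}$ is the one consistent with \eqref{eq:explicit_inverse_T} and with the row $-\mathfrak{X}_{21}\vec{y}_1+\vec{y}_2$, so no issue there.
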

\begin{proof}
The expressions in the case $k<j$ follow from the fact that lower triangular matrices form a multiplicative group and a direct calculation yields the statement for $k=j$. Hence, we are left with $k>j$.
As $\left(\id_{N} -T^{\prime}_{L} \right)^{0} = \id$, we have
\begin{equation*}
        \left(\id_{N} -T^{\prime}_{L} \right)^{0}_{kj} = 0, \quad \text{for all } k > j,
\end{equation*}
and thus, $t=0$ does not contribute to the sum in \eqref{eq:TinvNeumann}. Moreover, $ \left(\id_{N} -T^{\prime}_{L} \right)^{1}_{kj} = -\mathfrak{X}_{kj}$ for $k>j$, and
\begin{align*}
     \left(\id_{N} -T^{\prime}_{L}\right)^{2}_{kj} = \sum\nolimits_{h = 0}\nolimits^{L}  \left(\id_{N} -T^{\prime}_{L}\right)_{kh} \left(\id_{N} -T^{\prime}_{L}\right)_{hj} = 
     \sum\nolimits_{k > h > j} \mathfrak{X}_{kh} \mathfrak{X}_{hj},
\end{align*}
since $ \left(\id_{N} -T^{\prime}_{L}\right)_{kj} = 0$, for all $k \le j$. Next, we prove by induction over $t$ that
\begin{equation*}
     \left(\id_{N} -T^{\prime}_{L}\right)^{t}_{kj} = (-1)^{t}\sum\nolimits_{k > h_{1} >... > h_{t-1} > j} \mathfrak{X}_{kh_{1}} \mathfrak{X}_{h_{1}h_{2}} \cdots \mathfrak{X}_{h_{t-1}j},
\end{equation*}
from which the statement follows.  
To this end, we  assume that the statement holds for all $2 \le t \le \iota$, and then show that is also true for $t = \iota+1$.
\begin{align*}
 \left(\id_{N} -T^{\prime}_{L}\right)^{\iota+1}_{kj} & = \sum\nolimits_{h_{\iota} = 0}\nolimits^{L}  \left(\id_{N} -T^{\prime}_{L}\right)^{\iota}_{kh_{\iota}}  \left(\id_{N} -T^{\prime}_{L}\right)_{h_{\iota}j} \\
& = (-1)^{\iota} \sum\nolimits_{h_{\iota} = 0}\nolimits^{L} \sum\nolimits_{k > h_{1} >... > h_{t-1} > h_{\iota}} \mathfrak{X}_{kh_{1}} \mathfrak{X}_{h_{1}h_{2}} \cdots \mathfrak{X}_{h_{\iota-1}h_{\iota}} (-1)\mathfrak{X}_{h_{\iota}j} \\
& = (-1)^{\iota+1}\sum\nolimits_{k > h_{1} >... > h_{\iota} > j} \mathfrak{X}_{kh_{1}} \mathfrak{X}_{h_{1}h_{2}} \cdots \mathfrak{X}_{h_{\iota}j}
\end{align*}
which leads, following our notation, to
\begin{equation*}
 \left(\id_{N} -T^{\prime}_{L}\right)^{\iota+1}_{kj} = (-1)^{\iota+1} \sum\nolimits_{k > h_{1} >... > h_{\iota} > j} \mathfrak{X}_{(k, h_{1}, ..., h_{\iota}, j)} = (-1)^{\iota+1} 
 \sum\nolimits_{\vec{p} \in P^{(\iota+2)}_{kj}} \mathfrak{X}_{\vec{p}}.
\end{equation*}
It also follows that $\left(\id_{N} -T^{\prime}_{L}\right)^{\iota+1}_{kj} = 0$ for all $\, \iota \, : \, \iota \ge k-j$.
Finally, we obtain
\begin{align*}
\left( T^{\prime -1}_{L} \right)_{kj} & = \sum\nolimits_{t = 0}\nolimits^{k-j}  \left(\id_{N} -T^{\prime}_{L}\right)^{l}_{kj} = \sum\nolimits_{t = 1}\nolimits^{k-j} 
\sum\nolimits_{\vec{p} \in P^{(t+1)}_{kj}} (-1)^{t} \mathfrak{X}_{\vec{p}} \\
& = \sum\nolimits_{t = 1}\nolimits^{k-j} \sum\nolimits_{\vec{p} \in P^{(l+1)}_{kj}} (-1)^{|\vec{p}|-1} \mathfrak{X}_{\vec{p}} = \sum\nolimits_{\vec{p} \in P_{kj}} (-1)^{|\vec{p}|-1} \mathfrak{X}_{\vec{p}}.
\end{align*}
Thus we can explicitly write down the inverse
\begin{equation}
    T^{\prime-1}_{L} = 
    \begin{bmatrix}
        \id & \vec{0} & & \\
        - \mathfrak{X}_{21} \, & \id  & &  \\
        \sum_{\vec{p} \in P_{31}} (-1)^{|\vec{p}|-1} \mathfrak{X}_{\vec{p}} \, & - \mathfrak{X}_{32} & &  \\
        \vdots \, & \vdots \, &   & \ddots  \\
        \sum_{\vec{p} \in P_{L1}} (-1)^{|\vec{p}|-1} \mathfrak{X}_{\vec{p}} \, & \sum_{\vec{p} \in P_{L2}} (-1)^{|\vec{p}|-1} \mathfrak{X}_{\vec{p}} \, &   & \cdots \, & \id \\
        \end{bmatrix},
    \label{eq:chi_inv_explicit}
\end{equation}
and therefore we obtain
\begin{equation*}
T^{\prime-1}_{L} \bm y = 
\left(\begin{array}{c}
    \bm y_{1}\\
    - \mathfrak{X}_{21} \vec{y}_{1} + \vec{y}_{2} \\
    \sum_{i = 1}^{2} \sum_{\vec{p} \in P_{3 i}} (-1)^{|\vec{p}|} \mathfrak{X}_{\vec{p}} \vec{y}_{i} + \vec{y}_{3} \\
    \vdots \\
    \sum_{i = 1}^{L-1} \sum_{\vec{p} \in P_{L i}} (-1)^{|\vec{p}|} \mathfrak{X}_{\vec{p}} \vec{y}_{i} + \vec{y}_{L} \\
    \end{array}\right)
\end{equation*}
which finishes the proof.
\end{proof}
Though such a representation is theoretically appealing, it is of limited practical use as the index sets will make the sums large. Hence, we need a more efficient way to solve such linear systems.
To this end, we observe that we can split the solution of the linear system into a coupled system via (see \eqref{eq:matrix_decomposition})
\begin{equation}\label{eq:split}
    T_{L} \vec{\alpha} = \mathbf{f} \Longleftrightarrow \left\{ \begin{array}{c} D_{L} \vec{\alpha} = \vec{\beta}\\ 
    T_{L}^{\prime} \vec{\beta} = \mathbf{f} 
    \end{array} 
\right\}.
\end{equation}
The two systems will be treated separately.

\subsection{The block diagonal system}
We first note that the linear system of equations 
\begin{equation}\label{eq:blockdiagonal}
	D_{L} \vec{\alpha} = \vec{\beta}
\end{equation} 
does not allow for a uniform upper bound on the condition number, as we have to take into consideration eigenvalues on different levels $1\le \ell \le L$. It is, however, a block diagonal system where each block $A_{\ell}$ has a uniform bound on 
the condition number as given in Lemma \ref{lem:condA}.

\begin{theorem}\label{thm:cg}
The overall cost for the solution of the linear system $D_{L} \vec{\alpha} =\vec{\beta}$ with the blockwise conjugate gradient methods (see \cite{wendland_2017}) up to an error tolerance 
\begin{equation*}
    \frac{\| \vec{\alpha} - \vec{\alpha}^{\ast} \|_{2}}{\| \vec{\alpha}^{(0)} - \vec{\alpha}^{\ast} \|_{2}}\le \varepsilon 
\end{equation*}
is 
\begin{equation*}
    \operatorname{cost}(CG) \le \mathcal{O} \left(  N  \lceil \frac{1}{2} \sqrt{C_{cg}} \log{\left(\frac{ (C_{d}c_{\Phi})^{-1} \left(1 +4 M_{d}^{2} \nu^{2} c_{q}^{2}\right)^{\tau}) \sqrt{L} q_{1}^{d/2}}{\varepsilon} \right)} \rceil \right).
\end{equation*}
\end{theorem}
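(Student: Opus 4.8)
The plan is to reduce the cost estimate to three ingredients: the decoupling of the block-diagonal system, a uniform per-block conjugate-gradient iteration count coming from the uniform condition-number bound of Lemma \ref{lem:condA}, and a sparse matrix-vector cost per iteration coming from the compact support of $\Phi$. First I would exploit that $D_L=\mathrm{diag}(A_1,\dots,A_L)$ is block diagonal with symmetric positive definite blocks, so that $D_L\vec\alpha=\vec\beta$ decouples into the $L$ independent systems $A_\ell\vec\alpha^{(\ell)}=\vec\beta^{(\ell)}$, each solved by conjugate gradients.

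For a single block the standard energy-norm estimate gives $\|\vec\alpha^{(\ell)}-\vec\alpha^{(\ell),\ast}\|_{A_\ell}\le 2\rho_\ell^{k}\|\vec\alpha^{(\ell),(0)}-\vec\alpha^{(\ell),\ast}\|_{A_\ell}$ with contraction factor $\rho_\ell=(\sqrt{\kappa_2(A_\ell)}-1)/(\sqrt{\kappa_2(A_\ell)}+1)$. By Lemma \ref{lem:condA} we have $\kappa_2(A_\ell)\le C_{cg}:=4^{d}(C_dc_\Phi)^{-1}(1+4M_d^2\nu^2c_q^2)^\tau$ uniformly in $\ell$, hence $\rho_\ell\le\rho:=(\sqrt{C_{cg}}-1)/(\sqrt{C_{cg}}+1)$ and a single iteration number works on every level. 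Using $-\log\rho\ge 2/\sqrt{C_{cg}}$ I would then turn a target contraction $\rho^{k}\le\eta$ into the iteration count $k\ge\tfrac12\sqrt{C_{cg}}\log(1/\eta)$, which produces the prefactor $\tfrac12\sqrt{C_{cg}}$.

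The delicate step is to fix the per-block tolerance $\eta$ that guarantees the stated global Euclidean relative error. Here I would convert the energy-norm bound into the Euclidean norm via the eigenvalue estimates $\lambda_{\min}(A_\ell)^{-1}=\|A_\ell^{-1}\|_{2\to2}\le q_\ell^{d}(C_dc_\Phi)^{-1}(1+4M_d^2\nu^2c_q^2)^\tau$ from \eqref{eq:Aelllower} and $\lambda_{\max}(A_\ell)\le(4/q_\ell)^{d}$ from \eqref{eq:upper}, both rendered $\ell$-independent by replacing $q_\ell$ with the coarsest separation distance $q_1\ge q_\ell$; this is where $q_1^{d/2}$ together with the factor $(C_dc_\Phi)^{-1}(1+4M_d^2\nu^2c_q^2)^\tau$ enters. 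Since $\|\vec\alpha-\vec\alpha^\ast\|_2^2=\sum_{\ell=1}^{L}\|\vec\alpha^{(\ell)}-\vec\alpha^{(\ell),\ast}\|_2^2$, controlling each of the $L$ summands by a common share of the global budget contributes the extra $\sqrt L$. Combining the conversion constant with the $\sqrt L$ gives $\eta^{-1}\sim (C_dc_\Phi)^{-1}(1+4M_d^2\nu^2c_q^2)^\tau\sqrt L\, q_1^{d/2}/\varepsilon$, exactly the argument of the logarithm, while the ceiling accounts for the integer iteration count.

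Finally I would bound the cost of one CG iteration, which is dominated by a single multiplication with $A_\ell$. Because $\Phi$ is supported in the unit ball, the entry $(A_\ell)_{ij}$ vanishes unless $\|\vec x_i^{(\ell)}-\vec x_j^{(\ell)}\|_2\le\delta_\ell=\nu h_\ell$; quasi-uniformity \eqref{eq:quasiuniform} bounds the number of points inside such a ball by a constant depending only on $\nu c_q$ and $d$, so $A_\ell$ has $\mathcal{O}(1)$ nonzeros per row and each matrix-vector product costs $\mathcal{O}(N(\ell))$. Multiplying by the uniform iteration count and summing $\sum_{\ell=1}^{L}N(\ell)=N$ yields the claimed total cost. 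I expect the main obstacle to be the norm-conversion bookkeeping of the third paragraph: conjugate gradients contracts in the energy norm while the tolerance is posed in the Euclidean norm, and the spectra of the $A_\ell$ are genuinely level dependent, so keeping every constant uniform in $\ell$ and assembling them into precisely the factor $(C_dc_\Phi)^{-1}(1+4M_d^2\nu^2c_q^2)^\tau\sqrt L\,q_1^{d/2}$ is where the care lies.
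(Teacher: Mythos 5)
Your strategy is the same as the paper's: decouple $D_L$ into the blocks $A_\ell$, get a level-independent CG contraction factor from the uniform condition bound of Lemma \ref{lem:condA}, convert the energy-norm estimate to the Euclidean norm, and pay $\mathcal{O}(N(\ell))$ per iteration thanks to the $(1+\nu c_q)^d$ bound on nonzeros per row, so that the total cost is $N$ times a uniform iteration count. Your derivation of the prefactor via $-\log\rho \ge 2/\sqrt{C_{cg}}$, with $C_{cg}$ the condition-number bound, is in fact cleaner than the paper's treatment of $C_{cg}$.

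There is, however, a concrete flaw in the step you yourself flag as delicate. The two eigenvalue bounds behave oppositely in $\ell$: since $q_\ell \le q_1$, the bound $\|A_\ell^{-1}\|_{2\to 2}\le q_\ell^{d}(C_dc_\Phi)^{-1}(1+4M_d^2\nu^2c_q^2)^\tau$ can indeed be uniformized by $q_\ell\mapsto q_1$, but $\lambda_{\max}(A_\ell)\le(4/q_\ell)^{d}$ \emph{grows} as $q_\ell$ shrinks, so replacing $q_\ell$ by $q_1$ there produces a smaller quantity, not an upper bound. As a consequence, your account of where $q_1^{d/2}$ comes from does not hold up: if you carry out the full two-sided conversion, the powers of $q_\ell$ cancel in the product $\lambda_{\min}(A_\ell)^{-1}\lambda_{\max}(A_\ell)=\kappa_2(A_\ell)\le C_{cg}$, giving the per-level Euclidean contraction $2\rho^k\sqrt{C_{cg}}$ uniformly in $\ell$, hence a global relative Euclidean error $\le 2\rho^k\sqrt{C_{cg}}$ with no $q_1^{d/2}$ and in fact no $\sqrt{L}$ at all; the resulting iteration count $\lceil\tfrac12\sqrt{C_{cg}}\log(2\sqrt{C_{cg}L}/\varepsilon)\rceil$ (with or without the $\sqrt{L}$, depending on how you allocate the budget) still lies within the claimed $\mathcal{O}$, so the theorem survives, but through different bookkeeping than you describe. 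For comparison, in the paper the factor $q_\ell^{d/2}$ (maximized to $q_1^{d/2}$) enters through a \emph{one-sided} conversion: $\|A_\ell^{-1}\|_{2\to2}\le Cq_\ell^{d}$ is applied only to the output error while the per-level initial error is left in the $A_\ell$-norm — a point at which the paper's own proof is loose, since its final inequality silently compares $\sum_\ell\|\cdot\|_{A_\ell}^2$ against $\sum_\ell\|\cdot\|_{2}^2$. You should repair your paragraph by either invoking the cancellation to $\kappa_2(A_\ell)$, or by reproducing the paper's one-sided argument and acknowledging the mixed norms it leaves behind.
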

\begin{proof}
We observe 
\begin{align}\label{eq:blockdiagonal_levelwise}
	D_{L} \vec{\alpha} = \vec{\beta} \Leftrightarrow A_{\ell} \vec{\alpha}_{\ell} = \vec{\beta}_{\ell}, \quad 1\le \ell \le L,
\end{align}
where $\vec{\alpha}_{\ell},\vec{\beta}_{\ell} \in \R^{N(\ell)}$.
Choosing a starting vector $\vec{\alpha}^{(0)}_{\ell} \in \R^{N(\ell)}$, we get from \cite[Theorem 6.19, p.200]{wendland_2017}
the following error bound for the iterates of the conjugate gradient method applied to the linear system $A_{\ell} \vec{\alpha}_{\ell} = \vec{\beta}_{\ell}$
\begin{equation*}
    \| \vec{\alpha}^{(k)}_{\ell} - \vec{\alpha}^{\ast}_{\ell} \|_{A_{\ell}} \le 2 \left( \frac{\sqrt{\kappa_{2}(A_{\ell})}-1}{\sqrt{\kappa_2(A_{\ell})}+1} \right)^{k} \, \| \vec{\alpha}^{(0)}_{\ell} - \vec{\alpha}^{\ast}_{\ell} \|_{A_{\ell}},
\end{equation*}
where $\vec{\alpha}^{\ast}_{\ell} \in \R^{N(\ell)}$ denotes the true solution.
By the same argument as in the proof of Lemma \ref{lem:lagrangedecay},
we obtain a uniform upper bound 
\begin{equation*}
    C_{cg}:=\max_{1\le \ell \le L} \frac{\sqrt{\kappa_{2}(A_{\ell})}-1}{\sqrt{\kappa_2(A_{\ell})}+1} =\log{\left(\frac{\sqrt{z }-1}{\sqrt{z}+1}\right)}\quad \text{with } 
     z:=4^{d} (C_{d}c)^{-1} \left(1 +4 M_{d}^{2} \nu^{2} c_{q}^{2}\right)^{\tau},
\end{equation*}
which does not depend on the level.
For the iterative solution of the linear system \eqref{eq:blockdiagonal} with the blockwise applied conjugate gradient method, we obtain an error bound
\begin{equation*}
\| \vec{\alpha}_{\ell}^{(k)} - \vec{\alpha}_{\ell}^{\ast} \|_{A_{\ell}} \le \frac{\varepsilon}{\sqrt{L}(C_{d}c_{\Phi})^{-\frac{1}{2}} \left(1 +4 M_{d}^{2} \nu^{2} c_{q}^{2}\right)^{\frac{\tau}{2}}} q^{-\frac{d}{2}}_{\ell} \| \vec{\alpha}_{\ell}^{(0)} - \vec{\alpha}_{\ell}^{\ast} \|_{A_{\ell}}
\end{equation*}
after at most
\begin{equation}\label{eq:keps}
k_{\ell}(\varepsilon) \sim \lceil \frac{1}{2} \sqrt{C_{cg}} \log{\left(\frac{ (C_{d}c_{\Phi})^{-\frac{1}{2}} \left(1 +4 M_{d}^{2} \nu^{2} c_{q}^{2}\right)^{\frac{\tau}{2}} \sqrt{L}}{\varepsilon} q^{\frac{d}{2}}_{\ell}\right)} \rceil
\end{equation}
many iteration steps, where the constants are defined in Lemma \ref{lem:condA} and $q_{\ell} \le \mu^{\ell}$.
Thus, we obtain using Lemma \ref{lem:condA} an estimate
\begin{align*}
	\left\| \vec{\alpha}^{(k)} -\vec{\alpha}^{\ast}\right\|_{2}^{2} &\le \sum_{\ell=1}^{L} \lambda_{\max}(A^{-1}_{\ell})\| \vec{\alpha}_{\ell}^{(k)} - \vec{\alpha}_{\ell}^{\ast} \|^2_{A_{\ell}} \\& 
	\le (C_{d}c_{\Phi})^{-1} \left(1 +4 M_{d}^{2} \nu^{2} c_{q}^{2}\right)^{\tau} \sum_{\ell=1}^{L}   q^d_{\ell}\| \vec{\alpha}_{\ell}^{(k)} - \vec{\alpha}_{\ell}^{\ast} \|^2_{A_{\ell}}
	\le \varepsilon^{2} \left\| \vec{\alpha}^{(0)} -\vec{\alpha}^{\ast}\right\|_{2}^{2}. 
\end{align*}
Moreover, we note that the numerical cost of one iteration in the conjugate gradient method is dominated by the costs of a matrix vector multiplication with the system matrix. 
In the matrix $A_{\ell}$ the number of non-zero entries per row is bounded by 
\begin{equation}\label{eq:rowcost}
	C_{n}:= \# \{k : \|\vec{x}_{k}^{\ell}-\vec{x}_{n}^{\ell}\|_{2} \le \delta_{\ell}\} \le \left(1+\delta_{\ell}/q_{\ell}\right)^{d} \le \left(1+\nu c_{q}\right)^{d}, \quad 1\le n \le N(\ell).
\end{equation}
This implies that 
\begin{equation*}
	\operatorname{cost}(\vec{x}_{\ell} \mapsto A_{\ell} \vec{x}_{\ell}) \in \mathcal{O}(N(\ell)).
\end{equation*}
Thus, we obtain for the overall cost 
\begin{equation*}
	\operatorname{cost}(CG) \le \mathcal{O} \left(  \sum_{\ell=1}^{L}N(\ell) k_{\ell}(\varepsilon) \right),
\end{equation*}
which concludes the proof in view of \eqref{eq:keps}.
\end{proof}
Numerical results about Theorem \ref{thm:cg} can be found in \cite{Wendland2010}.
\subsection{The lower block triangular system}
We begin with the observation, that the linear system (see \eqref{eq:matrix_decomposition})
\begin{align} \label{eq:split_system_triangular}
       T_{L}^{\prime} \vec{\beta} = \mathbf{f}
\end{align}
% D_{n} \bm \alpha &= \bm \beta \label{eq:split_system_diagonal} \\
can be solved exactly with the Jacobi method in at most $L$ steps, where the right-hand-side is $\mathbf{f} =\left(f_{X(1)}, \dots , f_{X(L)}  \right)^{T}\in \R^{N} $ for some $f\in H^{\tau}(\Omega)$.

\begin{theorem}\label{thm:jacobi}
The Jacobi iterative method applied to the system \eqref{eq:split_system_triangular} 
 converges for every starting point $\vec{\beta}_{0}$ to the solution in exactly $L$ steps. 
\end{theorem}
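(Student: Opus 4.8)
The plan is to recognize the Jacobi iteration matrix for the system $T'_L\vec{\beta}=\mathbf{f}$ and to identify it with the nilpotent matrix $\id_N - T'_L$ already exhibited in the excerpt. The crucial structural observation is that the block-diagonal of $T'_L$ consists precisely of the identity blocks $\id_{N(\ell)}$ (see \eqref{eq:T_n_prime}), so the Jacobi splitting $T'_L = D + R$ has $D=\id_N$ and $R = T'_L - \id_N$ equal to the strictly block lower-triangular part built from the blocks $\mathfrak{X}_{k,\ell}$, $k>\ell$.

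First I would write out the Jacobi iteration explicitly. Since $D=\id_N$, it reads
\begin{equation*}
\vec{\beta}^{(k+1)} = D^{-1}\bigl(\mathbf{f} - R\,\vec{\beta}^{(k)}\bigr) = \mathbf{f} - (T'_L - \id_N)\,\vec{\beta}^{(k)},
\end{equation*}
so that the iteration matrix is $M_J = -D^{-1}R = \id_N - T'_L$. Writing the exact solution as $\vec{\beta}^{\ast} = (T'_L)^{-1}\mathbf{f}$ and the error as $\vec{e}^{(k)} = \vec{\beta}^{(k)} - \vec{\beta}^{\ast}$, I would subtract the fixed-point relation $\vec{\beta}^{\ast} = \mathbf{f} - (T'_L-\id_N)\vec{\beta}^{\ast}$ from the iteration to obtain the clean error recursion $\vec{e}^{(k+1)} = (\id_N - T'_L)\,\vec{e}^{(k)}$, and hence $\vec{e}^{(k)} = (\id_N - T'_L)^{k}\,\vec{e}^{(0)}$.

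Next I would invoke the nilpotency already established in the excerpt, namely $(\id_N - T'_L)^{L} = 0$. This immediately yields $\vec{e}^{(L)} = (\id_N - T'_L)^{L}\,\vec{e}^{(0)} = \vec{0}$ for \emph{every} starting vector $\vec{\beta}^{(0)}$, so that $\vec{\beta}^{(L)} = \vec{\beta}^{\ast}$ and the method terminates at the exact solution after $L$ steps, independently of $\vec{\beta}^{(0)}$.

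I expect essentially no serious obstacle here, since the heavy lifting (the nilpotency of $\id_N - T'_L$, resting on its strictly block lower-triangular structure with $L$ diagonal blocks) was already carried out. The only point deserving a little care is the word \emph{exactly}: to see that $L$ steps are in general also necessary, I would check that the nilpotency index is sharp, i.e. $(\id_N - T'_L)^{L-1}\neq 0$. Using the block formula from the proof of Theorem \ref{thm:reformulation}, the bottom-left block of $(\id_N - T'_L)^{L-1}$ equals $(-1)^{L-1}\,\mathfrak{X}_{L,L-1}\mathfrak{X}_{L-1,L-2}\cdots\mathfrak{X}_{2,1}$, the unique decreasing chain product of the subdiagonal blocks, which is nonzero whenever consecutive point sets genuinely interact; hence the error does not vanish earlier for a generic $\vec{\beta}^{(0)}$, justifying that $L$ steps are both sufficient and, generically, required.
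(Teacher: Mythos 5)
Your proposal is correct and follows essentially the same route as the paper: both identify the Jacobi iteration matrix as $\id_{N}-T_{L}^{\prime}$ and conclude from its nilpotency, $(\id_{N}-T_{L}^{\prime})^{L}=0$, that the iteration reaches the exact solution after $L$ steps for any starting vector (you phrase this via the error recursion $\vec{e}^{(k)}=(\id_{N}-T_{L}^{\prime})^{k}\vec{e}^{(0)}$, the paper via the explicit iterate formula $\vec{\beta}_{m}=\sum_{t=0}^{m-1}(\id-T_{L}^{\prime})^{t}\mathbf{f}+(\id-T_{L}^{\prime})^{m}\vec{\beta}_{0}$, which is the same argument in different bookkeeping). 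Your additional remark on sharpness of the nilpotency index, showing that fewer than $L$ steps generically do not suffice, is a sensible refinement that the paper does not address.
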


\begin{proof} 
The iterative step of the Jacobi method is:
\begin{equation}\label{eq:jacobi}
    \vec{\beta}_{m+1} = \mathbf{f} +  \left( \id - T_{L}^{\prime}\right)\vec{\beta}_{m}.
\end{equation}
Then we have for arbitrary given starting value $\vec{\beta}_{0}$ that
\begin{equation*}
\vec{\beta}_{m}  =\sum_{t=0}^{m-1}\left( \id - T_{L}^{\prime}\right)^t \mathbf{f} + \left( \id - T_{L}^{\prime}\right)^{m} \vec{\beta}_{0},
\end{equation*}
which can be proven by induction over $m$. As $\left( \id - T_{L}^{\prime}\right)^L=0$, we get for $m=L$
\begin{equation}\label{eq:exactjacobi}
    \vec{\beta}_{L} =\sum_{t=0}^{L-1}\left( \id - T_{L}^{\prime}\right)^t \mathbf{f} + \left( \id - T_{L}^{\prime}\right)^{L} \vec{\beta}_{0} = 
    \sum_{t=0}^{L-1}\left( \id - T_{L}^{\prime}\right)^t \mathbf{f}= (T_{L}^{\prime })^{-1} \mathbf{f},
\end{equation}
which shows the claimed convergence after $L$ steps.
\end{proof}

We now consider the full system \eqref{eq:split} and obtain the following statement:
\begin{theorem}\label{thm:costsforeps}
	Let $\varepsilon \in (0,1)$ be given. The solution of \eqref{eq:split} with the methods outlined in Theorem \ref{thm:cg} and Theorem \ref{thm:jacobi} yields an approximation 
    \begin{equation*}
        \frac{\| \vec{\alpha} - \vec{\alpha}^{\ast} \|_{2}}{\| \vec{\alpha}^{(0)} - \vec{\alpha}^{\ast} \|_{2}}\le \varepsilon 
    \end{equation*}
    at costs of 
	\begin{equation*}
		\operatorname{costs} (\text{solve}_{\eqref{eq:split}}) \in \mathcal{O} \left( LN^2+ N  \lceil \frac{1}{2} \sqrt{C_{cg}} \log{\left(\frac{ (C_{d}c_{\Phi})^{-1} \left(1 +4 M_{d}^{2} \nu^{2} c_{q}^{2}\right)^{\tau}) \sqrt{L} q_{1}^{d/2}}{\varepsilon} \right)} \rceil \right).
	\end{equation*}
\end{theorem}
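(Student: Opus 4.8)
The plan is to exploit the splitting \eqref{eq:split} and simply concatenate the two solvers whose individual behaviour has already been established: first solve $T'_{L}\vec{\beta} = \mathbf{f}$ with the Jacobi iteration of Theorem \ref{thm:jacobi}, and then solve $D_{L}\vec{\alpha} = \vec{\beta}$ with the blockwise conjugate gradient method of Theorem \ref{thm:cg}. The structural observation driving everything is that $T_{L} = T'_{L} D_{L}$, so that setting $\vec{\beta} := D_{L}\vec{\alpha}$ turns the single system $T_{L}\vec{\alpha} = \mathbf{f}$ into exactly the two decoupled systems of \eqref{eq:split}, which must be solved in this order.

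For the cost of the first stage I would invoke Theorem \ref{thm:jacobi}, which guarantees that the Jacobi iteration \eqref{eq:jacobi} reaches the \emph{exact} solution $\vec{\beta} = (T'_{L})^{-1}\mathbf{f}$ after exactly $L$ steps. Up to the addition of $\mathbf{f}$, each step is a single matrix--vector product with $\id_{N} - T'_{L}$. Since $T'_{L}$ is block lower triangular with identity diagonal blocks, its strictly lower-triangular part has at most $\sum_{k>\ell} N(k)N(\ell) \le N^{2}/2$ nonzero entries, so each such product costs $\mathcal{O}(N^{2})$ and the $L$ Jacobi steps together cost $\mathcal{O}(LN^{2})$. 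This accounts for the first summand in the claimed bound.

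For the second stage I would apply Theorem \ref{thm:cg} verbatim to the block-diagonal system $D_{L}\vec{\alpha} = \vec{\beta}$, whose cost is precisely the second summand. The crucial point for the error is that the first stage is exact: the computed $\vec{\beta}$ equals the true solution of $T'_{L}\vec{\beta} = \mathbf{f}$, so the true solution $\vec{\alpha}^{\ast}$ of $D_{L}\vec{\alpha} = \vec{\beta}$ coincides with the true solution of the full system $T_{L}\vec{\alpha} = \mathbf{f}$. Consequently no error is introduced by the Jacobi stage, and the relative error $\|\vec{\alpha} - \vec{\alpha}^{\ast}\|_{2}/\|\vec{\alpha}^{(0)} - \vec{\alpha}^{\ast}\|_{2} \le \varepsilon$ delivered by the conjugate gradient method for $D_{L}$ is already the relative error for the full problem. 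Summing the two costs yields the stated estimate.

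The main obstacle I anticipate is a clean treatment of the error propagation across the two stages: one has to verify that the exactness of the Jacobi stage genuinely prevents any amplification of the conjugate gradient error, so that the two error analyses do not compound into a worse tolerance. A secondary point needing care is the matrix--vector cost in the Jacobi step, in particular whether the blocks $\mathfrak{X}_{k\ell} = B_{k\ell}A_{\ell}^{-1}$ are regarded as precomputed dense blocks, which justifies the crude $\mathcal{O}(N^{2})$ bound used here, rather than being applied implicitly; the exponential decay of Lemma \ref{lem:lagrangedecay}, exploited later in Section \ref{sec:thresholding}, is deliberately not invoked at this stage.
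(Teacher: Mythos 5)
Your proposal is correct and follows essentially the same route as the paper: the paper's proof likewise uses the exactness of the Jacobi iteration after $L$ steps (Theorem \ref{thm:jacobi}) to charge $\mathcal{O}(LN^2)$ for that stage, and then invokes Theorem \ref{thm:cg} verbatim for the block-diagonal system, with the exactness of the first stage guaranteeing that the CG tolerance is the overall tolerance. Your additional remarks (the nonzero count justifying $\mathcal{O}(N^2)$ per Jacobi step, and the caveat about treating $\mathfrak{X}_{k\ell}=B_{k\ell}A_{\ell}^{-1}$ as precomputed) are more explicit than the paper's terse argument but consistent with it.
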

 \begin{proof}
 	We observe that the Jacobi solution to $ T_{L}^{\prime}\vec{\beta}=\mathbf{f}$ is exact. The numerical cost are $L$ Iterations with one matrix vector multiplication, i.e, 
	\begin{equation*}
		\operatorname{cost}(\text{Jacobi})\in \mathcal{O} ( L N^2).
	\end{equation*}
	The block CG-method is analyzed in Theorem \ref{thm:cg}. Thus, we obtain an error $\varepsilon$ at costs
	\begin{equation*}
		\operatorname{cost}(CG) \le \mathcal{O} \left(  N  \lceil \frac{1}{2} \sqrt{C_{cg}} \log{\left(\frac{ (C_{d}c_{\Phi})^{-1} \left(1 +4 M_{d}^{2} \nu^{2} c_{q}^{2}\right)^{\tau}) \sqrt{L} q_{1}^{d/2}}{\varepsilon} \right)} \rceil \right).
	\end{equation*}
	Hence, the proof follows.
 \end{proof}
The remaining goal is to reduce the $N^2$ term in Theorem \ref{thm:costsforeps} by exploiting decay in the matrices $\mathfrak{X}_{k, \ell} $.
We observe that the matrices can be compressed by simple thresholding.

\section{Thresholding}\label{sec:thresholding}
Here we aim to effectively set to zero smaller entries in the matrix involved in the Jacobi method. However, we first start with some preliminary analysis on the original matrix $\vec{M}:=\id -T_{L}^{\prime}$, see \eqref{eq:T_n_prime}. 
We obtain
\begin{align}\label{eq:M}
	\vec{M} \vec{c} = (\id -T_{L}^{\prime})\vec{c}=
\begin{pmatrix}
    0_{N(1)} & & & \\
    \mathfrak{X}_{21} \, & 0_{N(2)}  &  & \\
    \vdots \,  \, & \cdots \, & \ddots & \\
    \mathfrak{X}_{L1} \, & \mathfrak{X}_{L2}  \, & \cdots \, & 0_{N(L)} 
    \end{pmatrix}
    \begin{pmatrix}
    	\vec{c}^{(1)} \\ \vec{c}^{(2)} \\ \vdots \\ \vec{c}^{(L)}
    \end{pmatrix}
    = 
    \begin{pmatrix}
    	0 \\
	 \mathfrak{X}_{21} \vec{c}^{(2)} \\
	 \vdots\\
	 \sum_{\ell=1}^{L-1}\mathfrak{X}_{L \ell} \vec{c}^{(\ell)} 
    \end{pmatrix},
 \end{align}
where we have due to \eqref{eq:mathfrakXkell}
\begin{align*}
	\mathfrak{X}_{k \ell} \vec{c}^{(\ell)} = \left( \sum_{i=1}^{N(\ell)} \chi^{(\ell)}_{i}(\vec{x}^{(k)}_{1}) \vec{c}^{\ell}_{i} , \dots, \sum_{i=1}^{N(\ell)} \chi^{(\ell)}_{i}(\vec{x}^{(k)}_{N(k)}) \vec{c}^{\ell}_{i} \right)^{T} \in \R^{N(k)}.
\end{align*}

\begin{lemma}\label{lem:Mbound}
 Let Assumptions \ref{ass:kernel} and \ref{ass:pointset} be satisfied. Then, for the matrix $\vec{M}=\id -T_{L}^{\prime}$ in the Jacobi method (see \eqref{eq:jacobi}), we get the bound 
 \begin{equation} \label{eq:Mbound}
	\left\| M \right\|_{2 \to 2} \le \sqrt{\left\| M \right\|_{\infty \to \infty}\left\| M \right\|_{1 \to 1}}\le C C_{\Sigma} \sqrt{2 L c_{q}^{-d}} (c_h)^{-\frac{dL}{2}}\mu^{-\frac{d(L-1)}{2}}
\end{equation}
where the constants $C$ and $C_{\Sigma}$ are defined in the proof.
\end{lemma}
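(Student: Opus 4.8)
The plan is to start from the elementary interpolation inequality $\|M\|_{2\to2}\le\sqrt{\|M\|_{1\to1}\,\|M\|_{\infty\to\infty}}$, which is exactly the first inequality asserted in \eqref{eq:Mbound}, and then to estimate the maximal absolute column sum $\|M\|_{1\to1}$ and the maximal absolute row sum $\|M\|_{\infty\to\infty}$ separately. By \eqref{eq:M} and \eqref{eq:mathfrakXkell}, a row of $M$ is indexed by a point $\vec{x}^{(k)}_j$ at some level $k$ and a column by a point $\vec{x}^{(\ell)}_i$ at some level $\ell<k$, and the corresponding entry is precisely the cardinal value $\chi^{(\ell)}_i(\vec{x}^{(k)}_j)$. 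The key input is then the pointwise decay estimate \eqref{eq:chi_bound} from Lemma \ref{lem:lagrangedecay}, which I would abbreviate as $|\chi^{(\ell)}_i(\vec{x})|\le C\,e^{-\theta\|\vec{x}^{(\ell)}_i-\vec{x}\|_2/q_\ell}$, with $C$ the prefactor appearing in \eqref{eq:chi_bound}; this is the constant named in the statement.

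For $\|M\|_{\infty\to\infty}$ I would fix a row $(k,j)$ and sum the entry bound over all columns, i.e. over all $\ell<k$ and $1\le i\le N(\ell)$. For each fixed coarser level $\ell$ the inner sum $\sum_{i=1}^{N(\ell)}e^{-\theta\|\vec{x}^{(\ell)}_i-\vec{x}^{(k)}_j\|_2/q_\ell}$ is an exponentially decaying weight summed over the level-$\ell$ nodes, whose separation $q_\ell$ matches the decay length. A shell-counting argument, grouping the nodes into annuli of width $q_\ell$ about $\vec{x}^{(k)}_j$ and using that at most $O((1+m)^{d-1})$ level-$\ell$ nodes lie in the $m$-th annulus by the separation property, bounds this inner sum by the convergent series $C_\Sigma:=\sum_{m\ge0}(m+2)^d e^{-\theta m}$, independently of $\ell$, $k$ and $j$. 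Summing over the at most $L-1$ coarser levels then yields $\|M\|_{\infty\to\infty}\le C\,C_\Sigma\,(L-1)$, i.e. only linear growth in $L$.

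The column norm is where the exponential factor enters. Fixing a column $(\ell,i)$ and summing over all rows means summing $|\chi^{(\ell)}_i(\vec{x}^{(k)}_j)|$ over all finer levels $k>\ell$ and all $1\le j\le N(k)$. Now the decay length $q_\ell$ is coarse relative to the node spacing $q_k<q_\ell$ of the finer level, so the inner sum behaves like an integral of the weight against a dense node cloud. The same shell-counting argument, now with level-$k$ nodes in annuli of width $q_\ell$, gives $\sum_{j=1}^{N(k)}e^{-\theta\|\vec{x}^{(\ell)}_i-\vec{x}^{(k)}_j\|_2/q_\ell}\le C_\Sigma\,(q_\ell/q_k)^d$. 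The next step is to control $q_\ell/q_k$ through the hierarchy assumptions: combining quasi-uniformity \eqref{eq:quasiuniform} with the nesting $c_h\mu h_\ell\le h_{\ell+1}\le\mu h_\ell$ from Assumption \ref{ass:pointset} gives $q_\ell/q_k\le c_q\,(c_h\mu)^{-(k-\ell)}$, hence $(q_\ell/q_k)^d\le c_q^d(c_h\mu)^{-d(k-\ell)}$. Summing the resulting geometric series over $k=\ell+1,\dots,L$, whose ratio $(c_h\mu)^{-d}>1$ makes it dominated by the finest term $k=L$, and maximizing over $\ell$ at $\ell=1$, produces the exponential factor of order $(c_h)^{-dL}\mu^{-d(L-1)}$, up to the remaining $c_q$ powers and the geometric-series constant, which I would absorb into the collected constants.

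Finally I would multiply the two bounds and take the square root, collecting all level-independent factors into $C$ and $C_\Sigma$ to reach the stated right-hand side of \eqref{eq:Mbound}. I expect the main obstacle to be the shell-counting estimate in the column-sum step together with the careful bookkeeping that converts the separation/fill-distance relations into the precise powers of $c_q$, $c_h$ and $\mu$; the row-sum estimate and the interpolation inequality are comparatively routine. A point to handle with care is that the geometric series in the column case has ratio larger than one, so it must be summed from its largest, finest-level term, and it is exactly this term that fixes the $L$-dependence of the final bound.
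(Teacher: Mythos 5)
Your proposal is correct and follows essentially the same route as the paper's proof: the Hölder/interpolation bound $\|M\|_{2\to2}\le\sqrt{\|M\|_{1\to1}\|M\|_{\infty\to\infty}}$, the identification of the entries of $M$ with cardinal-function values bounded via \eqref{eq:chi_bound}, the Narcowich--Ward annulus-counting trick (yielding $C_\Sigma$) for both norms, the factor $(q_\ell/q_k)^d$ and the geometric series with ratio $(c_h\mu)^{-d}>1$ dominated by the finest level for the column sums, and the linear-in-$L$ bound for the row sums. Your bookkeeping of the separation ratio, $q_\ell/q_k\le c_q(c_h\mu)^{-(k-\ell)}$, is in fact the correct reading of \eqref{eq:quasiuniform} and gives a constant at least as good as the paper's, so the differences are only in level-independent prefactors.
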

\begin{proof}
	We observe 
	\begin{align*}
		\left\| M \right\|_{1 \to 1}  
        %&= \max_{1 \le \ell \le L-1} \max_{1\le k \le N(\ell)} \sum_{j=\ell+1}^{L} \sum_{i=1}^{N(j)} \left| (\mathfrak{X}_{j \ell})_{i k} \right| \\&
		= \max_{1 \le \ell \le L-1} \max_{1\le k \le N(\ell)} \sum_{j=\ell+1}^{L} \sum_{i=1}^{N(j)} \left| \chi^{(\ell)}_{k}( \vec{x}^{(j)}_{i})\right| \\
		\left\| M \right\|_{\infty \to \infty}  
        %&= \max_{2 \le j \le L} \max_{1\le i \le N(j)} \sum_{\ell=1}^{j-1} \sum_{k=1}^{N(\ell)} \left| (\mathfrak{X}_{j \ell})_{i k} \right| \\&
        = \max_{1 \le j \le L} \max_{1\le i \le N(j)} \sum_{\ell=1}^{j-1} \sum_{k=1}^{N(\ell)} \left| \chi^{(\ell)}_{k}( \vec{x}^{(j)}_{i})\right|. 
	\end{align*}
    Thus, we can employ results on decaying Lagrange functions in both cases.
	First we employ the Lagrange function decay from \eqref{eq:chi_bound} and denote 
	\begin{equation*}
		C:=2 (C_{d} c_{\Phi})^{-1}(1+C_{\Phi}c_{q}^{2}\nu^{2})^{\tau} e^{2 \theta \sqrt{d}} e^{c_{q} \nu \theta} (1+\nu c_{q})^{d}.
	\end{equation*} 
	Thus, we obtain 
	\begin{align*}
		\left\| M \right\|_{1 \to 1} \le C \max_{1 \le \ell \le L-1} \max_{1\le k \le N(\ell)} \sum_{j=\ell+1}^{L} \sum_{i=1}^{N(j)} e^{-\theta\|\vec{x}^{(\ell)}_{k}-\vec{x}^{(j)}_{i} \|_{2}/q_{\ell}}
	\end{align*}
    Here the summation is handled with the trick from \cite{Narcowich91}.
	We define for $1\le \ell \le L$ and  $m \in \N_{0}$
	\begin{equation*}
		\mathcal{A}^{(\ell)}_{(\ell;k)}(m):= B_{(m+1) q_{\ell}}(\vec{x}^{(\ell)}_{k})\setminus B_{m q_{\ell}}(\vec{x}^{(\ell)}_{k}), \quad 1\le k \le N(\ell).
	\end{equation*} 
	Then, we obtain as in \cite[Theorem 12.3]{wendland_2004} that the following bound
\begin{equation*}
    \#\left(X_{N(j)} \cap \mathcal{A}^{(\ell)}_{(\ell;k)}(m)  \right) \le (1+(m+1)q_{\ell}/q_{j})^{d}, \quad \text{for all } 1\le k \le N(\ell)
\end{equation*}
holds true. We get
\begin{align*}
    \left\| M \right\|_{1 \to 1} &\le C \max_{1 \le \ell \le L-1} \max_{1\le k \le N(\ell)} \sum_{j=\ell+1}^{L} \sum_{i=1}^{N(j)} e^{-\theta\|\vec{x}^{(\ell)}_{k}-\vec{x}^{(j)}_{i}\|_{2}/q_{\ell}}\\
    &\le C \max_{1 \le \ell \le L-1} \max_{1\le k \le N(\ell)} \sum_{j=\ell+1}^{L} \sum_{m=0}^{\infty} \sum_{\vec{x}^{(j)}_{i}\in X_{N(j)} \cap \mathcal{A}^{(\ell)}_{(\ell;k)}(m) }e^{-\theta\|\vec{x}^{(\ell)}_{k}-\vec{x}^{(j)}_{i}\|_{2}/q_{\ell}}\\
    &\le C \max_{1 \le \ell \le L-1} \max_{1\le k \le N(\ell)} \sum_{j=\ell+1}^{L} \sum_{m=0}^{\infty}(1+(m+1)q_{\ell}/q_{j})^{d} e^{-\theta m}\\
    &\le C \max_{1 \le \ell \le L-1} \sum_{j=\ell+1}^{L}  \left(\frac{q_{\ell}}{q_{j}} \right)^d \sum_{m=0}^{\infty}(m+2)^d e^{-\theta m} \\
\end{align*}
Now, we observe 
%\begin{align*}
    $q_{\ell} \le h_{\ell} \le \mu^{\ell} h_{0} $ and 
    %\quad \text{and} \quad 
    $q_{j}\ge c_{q}h_{j}\ge c_q (c_h \mu)^{j} h_0$. 
%\end{align*}
Thus, we have 
%\begin{align*}
  $  \frac{q_{\ell}}{q_j} \le \mu^{\ell -j} c^{-1}_q c_h^{-j} $.
%\end{align*}
This yields
 \begin{align*}
    \left\| M \right\|_{1 \to 1} &\le C  c_{q}^{-d} \max_{1 \le \ell \le L-1} \mu^{d\ell}\sum_{j=\ell+1}^{L}  \left(\mu c_{h}\right)^{-dj} \sum_{m=0}^{\infty}(m+2)^d e^{-\theta m} .
\end{align*}
Additionally we use the bound
%\begin{align*}
    $\sum_{m=0}^{\infty}(m+2)^d e^{-\theta m} =: C_{\Sigma}$
%\end{align*}
and hence,
\begin{align*}
    \left\| M \right\|_{1\to 1}  &= C C_{\Sigma}  c_{q}^{-d} \max_{1 \le \ell \le L-1} \mu^{d\ell } \sum_{j=\ell+1}^{L} (c^{-d}_h \mu^{-d})^{j}\\
    &= C C_{\Sigma}  c_{q}^{-d} \max_{1 \le \ell \le L-1} \mu^{d\ell } \frac{(c_h\mu)^{-d(\ell+1)} - (c_h\mu)^{-d(L+1)}}{1-(c_h\mu)^{-d}}\\
    &=C C_{\Sigma}  c^{-d}_h \mu^{-d} c_{q}^{-d} \max_{1 \le \ell \le L-1}  \frac{(c_h)^{-dL}\mu^{d(\ell -L)} - c^{-d\ell}_h  }{(c_h\mu)^{-d}-1}\\ 
    &\le 2C C_{\Sigma}  c_{q}^{-d} \max_{1 \le \ell \le L-1}  (c_h)^{-dL}\mu^{d(\ell -L)} - c^{-d\ell}_h \\ 
    &\le 2C C_{\Sigma}  c_{q}^{-d} (c_h)^{-dL}\mu^{-d(L-1)}. 
\end{align*} 
Moreover, we obtain
 \begin{align*}
    \left\| M \right\|_{\infty \to \infty} &\le C \max_{1 \le j \le L} \max_{1\le i \le N(j)} \sum_{\ell=1}^{j-1} \sum_{k=1}^{N(\ell)} e^{-\theta\|\vec{x}^{(\ell)}_{k}-\vec{x}^{(j)}_{i}\|_{2}/q_{\ell}}\\
&\le C \max_{1 \le j \le L} \max_{1\le i \le N(j)} \sum_{\ell=1}^{j-1} \sum_{m=0}^{\infty} \sum_{\vec{x}^{(\ell)}_{k}\in X_{N(\ell)} \cap \mathcal{A}^{(\ell)}_{j;i}(m) }
e^{-\theta\|\vec{x}^{(\ell)}_{k}-\vec{x}^{(j)}_{i}\|_{2}/q_{\ell}}
\end{align*}
Again, we use the notation 
\begin{equation*}
\mathcal{A}^{(\ell)}_{(j;i)}(m):= B_{(m+1) q_{\ell}}(\vec{x}^{(j)}_{i})\setminus B_{m q_{\ell}}(\vec{x}^{(j)}_{i}), \quad 1\le i \le N(j).
\end{equation*}
and obtain 
for $1\le i \le N(j)$ the uniform bound
\begin{equation*}
    \#\left(X_{N(\ell)} \cap \mathcal{A}^{(\ell)}_{(j;i)}(m)  \right) \le (1+(m+1)q_{\ell}/q_{\ell})^{d} \le (2+m)^d.
\end{equation*}
Thus, we obtain
\begin{align*}
    \left\| M \right\|_{\infty \to \infty} &\le 
	C \max_{1 \le j \le L} \max_{1\le i \le N(j)} \sum_{\ell=1}^{j-1} \sum_{m=0}^{\infty} \sum_{\vec{x}^{(\ell)}_{k}\in X_{N(\ell)} \cap \mathcal{A}^{(\ell)}_{j;i}(m) }
	e^{-\theta\|\vec{x}^{(\ell)}_{k}-\vec{x}^{(j)}_{i}\|_{2}/q_{\ell}}\\
	&\le C  \max_{1 \le j \le L} \max_{1\le i \le N(j)} \sum_{\ell=1}^{j-1} \sum_{m=0}^{\infty}(m+2)^d e^{-\theta m} \\
    &\le C C_{\Sigma}  \max_{1 \le j \le L} \max_{1\le i \le N(j)} (j-1) 
    \le C C_{\Sigma}  L.
\end{align*}
Finally, this yields
\begin{equation*}
	\left\| M \right\|_{2 \to 2} \le \sqrt{\left\| M \right\|_{\infty \to \infty}\left\| M \right\|_{1 \to 1}}\le C C_{\Sigma}  \sqrt{2 L c_{q}^{-d}} (c_h)^{-\frac{dL}{2}}\mu^{-\frac{d(L-1)}{2}}
\end{equation*}
which finishes the proof.
\end{proof}
 The estimate on the number of points in the proof was rough.
\begin{remark}
    In the proof we used a bound on the number of points in $\left(X_{N(j)} \cap \mathcal{A}^{(\ell)}_{(\ell;k)}(m) \right)$ of the form $\left(X_{N(j)} \cap  B_{(m+1) q_{\ell}}(\vec{x}^{(\ell)}_{k}) \right)$. A more accurate bound will be:
    \begin{align*}
        \#\left(X_{N(j)} \cap \mathcal{A}^{(\ell)}_{(\ell;k)}(m)  \right) &\le (1+(m+1)q_{\ell}/q_{j})^{d}-(mq_{\ell}/q_{j}-1)^{d}_{+} \\
        & = (2+q_{\ell}/q_{j})\sum_{s=1}^{d} \left((1+(m+1)q_{\ell}/q_{j})^{d-s} +(mq_{\ell}/q_{j}-1)^{s-1}_{+}\right).
    \end{align*}
    which since $\ell < j$, therefore $q_{\ell} > q_{j}$, behave as $\sim d(2+q_{\ell}/q_{j})(m q_{\ell}/q_{j})^{d-1}$.
    However, for our purposes the accuracy gain is not enough to justify the associated loss of readability.
\end{remark}
Now we define perturbed matrices 
\begin{align*}
	 \left(\tilde{\mathfrak{X}}_{\ell j }(T) \right)_{\genfrac{}{}{0pt}{}{1\le i \le N(\ell)}{1\le k \le N(j)}}:=\begin{cases} \chi^{(j)}_{k}(\vec{x}^{(\ell)}_{i}), & \| \vec{x}^{(\ell)}_{k}-\vec{x}^{(j)}_{i}\| < T \max \{ q_{\ell},q
	 _{j}\} =Tq_j \\
	 0, &\| \vec{x}^{(\ell)}_{k}-\vec{x}^{(j)}_{i}\| \ge T \max \{ q_{\ell} ,q_{j}\}=Tq_j \end{cases}
\end{align*}
from which we get the perturbed matrix
\begin{align}\label{eq:perturbedmatrix}
	\tilde{M}(T):=
\begin{pmatrix}
    0_{N(1)} & & & \\
    \tilde{\mathfrak{X}}_{21}(T) \, & 0_{N(2)}  &  & \\
    \vdots \,  \, & \cdots \, & \ddots & \\
    \tilde{\mathfrak{X}}_{L1}(T) \, & \tilde{\mathfrak{X}}_{L2}(T)  \, & \cdots \, & 0_{N(L)} 
    \end{pmatrix}.
\end{align}

We now aim to investigate the perturbed solution of system \eqref{eq:split_system_triangular} and its impact on the coupled system \eqref{eq:split}, namely

\begin{equation}\label{eq:perturbed_split}
    \begin{array}{c} D_{L} \tilde{\vec{\alpha}} = \tilde{\vec{\beta}}\\ 
    (id_{N}-\tilde{M}(T)) \tilde{\vec{\beta}} = \mathbf{f}.
    \end{array}.
\end{equation}

In the following different preliminary results will be achieved. We obtain 

\begin{lemma}\label{lem:Mpertubationbound}
Let Assumptions \ref{ass:kernel} and \ref{ass:pointset} be satisfied.
Let $M$ and $ \tilde{M}(T)$, be as in \eqref{eq:M} and \eqref{eq:perturbedmatrix}, respectively, for a fixed $T>0$. Then, we get
\begin{align*}
	\left\| M- \tilde{M}(T) \right\|_{2 \to 2}\le CC_{\Sigma} e^{\theta} \sqrt{2 L c_{q}^{-d}} (c_h)^{-\frac{dL}{2}}\mu^{-\frac{d(L-1)}{2}} T^{d} e^{-\theta T}, 
\end{align*}
where $\theta$ is given in \eqref{eq:theta}.
\end{lemma}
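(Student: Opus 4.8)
The plan is to mimic the structure of the proof of Lemma \ref{lem:Mbound}, since the difference matrix $M-\tilde M(T)$ has exactly the same block structure as $M$, with the $(k,\ell)$ block being $\mathfrak{X}_{k\ell}-\tilde{\mathfrak{X}}_{k\ell}(T)$. By construction of the thresholding, this difference block retains precisely those entries $\chi^{(\ell)}_i(\vec x^{(k)}_j)$ with $\|\vec x^{(\ell)}_i-\vec x^{(k)}_j\|_2\ge T\max\{q_\ell,q_k\}=Tq_\ell$ (recall $\ell<k$ so $q_\ell>q_k$), and zeros out the near-field entries. Thus the key observation is that the only change relative to Lemma \ref{lem:Mbound} is that all the geometric sums now start at distance $\ge Tq_\ell$ rather than at distance $0$, which introduces an extra exponential damping factor $e^{-\theta T}$ together with a polynomial correction.

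First I would again bound $\|M-\tilde M(T)\|_{2\to2}\le\sqrt{\|M-\tilde M(T)\|_{\infty\to\infty}\,\|M-\tilde M(T)\|_{1\to1}}$ and analyze each of the two one-norms separately, exactly as before. In each case I invoke the Lagrange decay bound \eqref{eq:chi_bound} to replace $|\chi^{(\ell)}_k(\vec x^{(j)}_i)|$ by $C\,e^{-\theta\|\vec x^{(\ell)}_k-\vec x^{(j)}_i\|_2/q_\ell}$, where $C$ is the constant from the proof of Lemma \ref{lem:Mbound}. The only modification is that the sum over the annuli $\mathcal{A}^{(\ell)}_{(\ell;k)}(m)$ now runs only over those $m$ with $(m+1)q_\ell\ge Tq_\ell$, i.e. $m\ge T-1$, so the geometric series $\sum_{m=0}^\infty(m+2)^d e^{-\theta m}$ gets replaced by the tail $\sum_{m\ge T-1}(m+2)^d e^{-\theta m}$.

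The one technical point I would isolate is the estimate of this tail. Shifting the index $m=n+(T-1)$ and factoring out $e^{-\theta(T-1)}$ gives a bound of the form $e^{-\theta T}e^{\theta}\sum_{n\ge0}(n+T+1)^d e^{-\theta n}$, and a routine majorization $(n+T+1)^d\lesssim T^d(n+2)^d$ for $T\ge1$ lets me pull out a factor $T^d$ while leaving behind exactly the convergent constant $C_\Sigma=\sum_{m\ge0}(m+2)^d e^{-\theta m}$ from before. This yields the extra factor $e^{\theta}T^d e^{-\theta T}$ multiplying the same geometric-level sums $\mu^{d\ell}\sum_{j>\ell}(c_h\mu)^{-dj}$ that already appeared in Lemma \ref{lem:Mbound}, so the remaining level-summation and the reduction to $\|M-\tilde M(T)\|_{1\to1}\le 2CC_\Sigma c_q^{-d}(c_h)^{-dL}\mu^{-d(L-1)}T^d e^{\theta}e^{-\theta T}$ and $\|M-\tilde M(T)\|_{\infty\to\infty}\le CC_\Sigma L\,T^d e^{\theta}e^{-\theta T}$ proceed verbatim as before.

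I expect the main obstacle to be purely bookkeeping rather than conceptual: keeping the polynomial factor $T^d$ clean requires a careful but elementary majorization of $(n+T+1)^d$, and I must be consistent about which separation distance realizes the maximum in $\max\{q_\ell,q_j\}$ (namely $q_\ell$ since $\ell<j$) so that the threshold radius $Tq_\ell$ aligns with the annulus width $q_\ell$ used in the covering argument. Once the tail estimate is packaged so that the single factor $e^{\theta}T^d e^{-\theta T}$ sits in front of the identical sums from Lemma \ref{lem:Mbound}, combining the two norm bounds through the interpolation inequality reproduces the claimed bound with the same constants $C$ and $C_\Sigma$.
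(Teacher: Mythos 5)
Your proposal is correct and follows essentially the same route as the paper's proof: the same splitting $\|M-\tilde M(T)\|_{2\to2}\le\sqrt{\|M-\tilde M(T)\|_{1\to1}\|M-\tilde M(T)\|_{\infty\to\infty}}$, the same Lagrange-decay bound and annulus covering, and the same tail estimate extracting the factor $e^{\theta}T^{d}e^{-\theta T}$ while retaining $C_{\Sigma}$. If anything, your starting index $m\ge T-1$ for the tail is slightly more careful than the paper's $\sum_{m=T}^{\infty}$ (which is only exact for integer $T$), and your majorization $(n+T+1)^{d}\le T^{d}(n+2)^{d}$ for $T\ge 1$ delivers the identical constant.
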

\begin{proof}
We get by Lemma \ref{lem:condA}
\begin{align*}
	&\left\| M- \tilde{M}(T) \right\|_{1 \to 1}  \le  \max_{1 \le \ell \le L-1} \max_{1\le k \le N(\ell)} \sum_{j=\ell+1}^{L} \sum_{1\le i \le N(j)} \left|  (\mathfrak{X}_{j \ell})_{i k} -(\tilde{\mathfrak{X}}_{j \ell}(T))_{i k} \right| \\
	&\le  \max_{1 \le \ell \le L-1} \max_{1\le k \le N(\ell)} \sum_{j=\ell+1}^{L} \sum_{\genfrac{}{}{0pt}{}{1\le i \le N(j)}{\| \vec{x}^{(\ell)}_{k} -\vec{x}^{(j)}_{i}\|\ge T q_{\ell}  }} 
	 \left| \chi^{(\ell)}_{k}(\vec{x}^{(j)}_{i}) \right| \\
	&\le C \max_{1 \le \ell \le L-1} \max_{1\le k \le N(\ell)} \sum_{j=\ell+1}^{L} \sum_{m=T}^{\infty} \sum_{\vec{x}^{(j)}_{i}\in X_{N(j)} \cap \mathcal{A}^{(\ell)}_{(\ell;k)}(m) }e^{-\theta\|\vec{x}^{(\ell)}_{k}-\vec{x}^{(j)}_{i}\|_{2}/q_{\ell}}\\
	&\le C  \max_{1 \le \ell \le L-1} \sum_{j=\ell+1}^{L} \left(\frac{q_{\ell}}{q_{j}} \right)^d \sum_{m=T}^{\infty}(m+2)^d e^{-\theta m}\\
	& \le 2 C  c_{q}^{-d} (c_h)^{-dL}\mu^{-d(L-1)} \sum_{m=T}^{\infty}(m+2)^d e^{-\theta m}.
\end{align*}
Now, we use with $C_{\Sigma}$ from Lemma \ref{lem:Mbound}
\begin{align*}
	 \sum_{m=T}^{\infty}(m+2)^d e^{-\theta m}&= \sum_{m=0}^{\infty}(m+T+2)^d e^{-\theta (m+T)} \le e^{-\theta T} T^{d} \sum_{m=0}^{\infty}(m+3)^d e^{-\theta m} \le  C_{\Sigma} e^{\theta} T^{d} e^{-\theta T}
\end{align*}
Moreover, we have
\begin{align*}
	\left\| M- \tilde{M}(T) \right\|_{\infty \to \infty}  &= \max_{2 \le j \le L} \max_{1\le i \le N(j)} \sum_{\ell=1}^{j-1} \sum_{k=1}^{N(\ell)} \left| (\mathfrak{X}_{j \ell})_{i k}  - (\tilde
	{\mathfrak{X}}_{j \ell}(T))_{i k} \right| \\
		&= \max_{1 \le j \le L} \max_{1\le i \le N(j)} \sum_{\ell=1}^{j-1} \sum_{\genfrac{}{}{0pt}{}{1\le k \le N(\ell)} {\| \vec{x}^{(\ell)}_{k} -\vec{x}^{(j)}_{i}\|\ge T q_{\ell}  }}\left| \chi^{(\ell)}_{k}( \vec{x}^{(j)}_{i})\right|\\
		& \le C \max_{1 \le j \le L} \max_{1\le i \le N(j)} \sum_{\ell=1}^{j-1} \sum_{m=T}^{\infty} \sum_{\vec{x}^{(\ell)}_{k}\in X_{N(\ell)} \cap \mathcal{A}^{(j)}_{i}(m) }
	e^{-\theta\|\vec{x}^{(\ell)}_{k}-\vec{x}^{(j)}_{i}\|_{2}/q_{\ell}}\\
	&\le C L \sum_{m=T}^{\infty}(m+2)^d e^{-\theta m}\le C C_{\Sigma} e^{\theta} L T^{d} e^{-\theta T}.
\end{align*}
This concludes the proof.
\end{proof}

\begin{lemma}\label{lem:pert1}
Let Assumptions \ref{ass:kernel} and \ref{ass:pointset} be satisfied.
Let $\vec{\beta}_0, \vec{y} \in \R^{N}$ be given, $M, \tilde{M}(T)$ as in Lemma \ref{lem:Mpertubationbound}. We consider 
	\begin{align*}
    		\vec{\beta}_{m+1} &:= \vec{y} + M \vec{\beta}_{m}, \quad m\ge 1 \\
		\tilde{\vec{\beta}}_{m+1}& := \vec{y} + \tilde{M}(T)\tilde{\vec{\beta}}_{m}, \quad \tilde{\vec{\beta}}_0=\vec{\beta}_{0}.
	\end{align*}
	Then it holds that
	\begin{equation*}
		\| \vec{\beta}_{m} -\tilde{\vec{\beta}}_{m}\|_{2} \le \left\|\vec{y} \right\|_{2} \left\|M - \tilde{M}(T)\right\|_{2 \to 2} \frac{1- (m+1)\left\| M \right\|_{2 \to 2}^{m} + m\left\| M \right\|_{2 \to 2}^{m+1} }{(1-\| M \|)^{2}}.
	\end{equation*}
\end{lemma}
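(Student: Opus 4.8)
The plan is to set up a linear recursion for the error $\vec{e}_m := \vec{\beta}_m - \tilde{\vec{\beta}}_m$ and solve it explicitly. Writing $\Delta := M - \tilde{M}(T)$ and subtracting the two iterations, I would first record that $\vec{e}_0 = \vec{0}$ (both sequences share the starting value $\vec{\beta}_0$) and that, for $m \ge 0$,
\begin{equation*}
\vec{e}_{m+1} = M\vec{\beta}_m - \tilde{M}(T)\tilde{\vec{\beta}}_m = M\vec{e}_m + \Delta\,\tilde{\vec{\beta}}_m,
\end{equation*}
by adding and subtracting $M\tilde{\vec{\beta}}_m$. Unrolling this recursion (a one-line induction on $m$) then gives the closed form $\vec{e}_m = \sum_{k=0}^{m-1} M^{\,m-1-k}\,\Delta\,\tilde{\vec{\beta}}_k$, which reduces the task to estimating the norms of the three factors in each summand.

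The two matrix factors are controlled by submultiplicativity: with $\rho := \|M\|_{2\to2}$ one has $\|M^{\,m-1-k}\|_{2\to2} \le \rho^{\,m-1-k}$. For the vector factor I would expand $\tilde{\vec{\beta}}_k = \sum_{i=0}^{k-1}\tilde{M}(T)^i\vec{y} + \tilde{M}(T)^k\vec{\beta}_0$ and use the monotonicity $\|\tilde{M}(T)\|_{2\to2} \le \|M\|_{2\to2} = \rho$: thresholding only deletes entries, so it cannot increase either $\|\cdot\|_{1\to1}$ or $\|\cdot\|_{\infty\to\infty}$, and hence, via the estimate $\|\cdot\|_{2\to2}\le\sqrt{\|\cdot\|_{1\to1}\|\cdot\|_{\infty\to\infty}}$ already used in Lemma \ref{lem:Mbound}, not the spectral bound either. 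Taking the seed $\vec{\beta}_0 = \vec{0}$ (as in the application, where the Jacobi iteration is started at zero) the second term drops and $\|\tilde{\vec{\beta}}_k\|_2 \le \|\vec{y}\|_2\sum_{i=0}^{k-1}\rho^i$; otherwise one carries along an extra $\rho^k\|\vec{\beta}_0\|_2$ that is handled identically.

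Substituting these estimates into the closed form gives $\|\vec{e}_m\|_2 \le \|\vec{y}\|_2\,\|\Delta\|_{2\to2}\sum_{k=0}^{m-1}\rho^{\,m-1-k}\sum_{i=0}^{k-1}\rho^i$, so the last step is to evaluate this nested geometric sum. Collapsing the inner sum shows it equals $\sum_{j=1}^{m-1}j\rho^{\,j-1}$; bounding this loosely by $\sum_{j=1}^{m}j\rho^{\,j-1} = \bigl(1-(m+1)\rho^m+m\rho^{m+1}\bigr)/(1-\rho)^2$ lands exactly on the claimed factor with $\rho = \|M\|_{2\to2}$ and $\|\Delta\|_{2\to2} = \|M-\tilde{M}(T)\|_{2\to2}$. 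The only genuinely delicate points are the monotonicity $\|\tilde{M}(T)\|_{2\to2}\le\|M\|_{2\to2}$ — which must be argued through the row/column-sum norms rather than directly for the spectral norm, since deleting entries need not decrease $\|\cdot\|_{2\to2}$ in general — and the bookkeeping in the double sum, where one bounds $\sum_{j=1}^{m-1}$ by $\sum_{j=1}^{m}$ to reach precisely the stated closed form.
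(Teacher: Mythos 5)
Your argument has the same skeleton as the paper's proof: the paper writes both iterations in closed form, $\vec{\beta}_{m+1}=\sum_{k=0}^{m}M^{k}\vec{y}+M^{m+1}\vec{\beta}_{0}$ and likewise for $\tilde{\vec{\beta}}_{m+1}$, bounds the difference of the two geometric sums via the telescoping factorization of $M^{k}-\tilde{M}(T)^{k}$, invokes $\|\tilde{M}(T)^{k}\|_{2\to2}\le\|M^{k}\|_{2\to2}$, and evaluates $\sum_{k=1}^{m}k\|M\|_{2\to2}^{k-1}$; unrolling the error recursion $\vec{e}_{m+1}=M\vec{e}_{m}+\Delta\tilde{\vec{\beta}}_{m}$ as you do is the same computation performed in a different order, and both your nested-sum identity and the closed form $\sum_{j=1}^{m}j\rho^{j-1}=\bigl(1-(m+1)\rho^{m}+m\rho^{m+1}\bigr)/(1-\rho)^{2}$ are correct. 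On one point you are more careful than the paper: the seed contribution $(M^{m}-\tilde{M}(T)^{m})\vec{\beta}_{0}$ is silently discarded in the paper's first inequality, and the stated bound is in fact false for arbitrary $\vec{\beta}_{0}$ and $m<L$ (take $\vec{y}=\vec{0}$), so restricting to $\vec{\beta}_{0}=\vec{0}$, or to $m\ge L$ where nilpotency of the strictly block-lower-triangular matrices kills this term, is genuinely needed.

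The gap is the step you yourself flag as delicate, and your repair of it does not work. From the facts that thresholding cannot increase $\|\cdot\|_{1\to1}$ or $\|\cdot\|_{\infty\to\infty}$, together with $\|\cdot\|_{2\to2}\le\sqrt{\|\cdot\|_{1\to1}\|\cdot\|_{\infty\to\infty}}$, you may conclude only $\|\tilde{M}(T)\|_{2\to2}\le\sqrt{\|M\|_{1\to1}\|M\|_{\infty\to\infty}}=:B$. Since $B$ is an \emph{upper} bound for $\|M\|_{2\to2}$ (possibly a strict one), this does not yield the monotonicity $\|\tilde{M}(T)\|_{2\to2}\le\|M\|_{2\to2}$ that your estimate of $\|\tilde{\vec{\beta}}_{k}\|_{2}$ requires; and that monotonicity is not a general fact, because zeroing entries can increase the spectral norm (e.g. $\left(\begin{smallmatrix}1&-1\\1&1\end{smallmatrix}\right)$ has norm $\sqrt{2}$, while zeroing the entry $-1$ gives norm $(1+\sqrt{5})/2$), and the entries of $M$ are Lagrange-function values of either sign. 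What your reasoning actually proves is the lemma with $B$ in place of $\|M\|_{2\to2}$, a strictly weaker statement --- though one that would suffice for Theorem \ref{thm:decayerror}, since that proof anyway estimates $\|M\|_{2\to2}$ by exactly this product through Lemma \ref{lem:Mbound}. In fairness, the paper's own proof is no more rigorous at this point: it asserts $\|\tilde{M}(T)^{k}\|_{2\to2}\le\|M^{k}\|_{2\to2}$ for all $k$ with no justification at all, so both proofs hinge on the same unproven claim; yours differs only in offering a justification for it that is fallacious as written.
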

\begin{proof}
	We first observe that
	\begin{equation*}
		\vec{\beta}_{m+1}= \sum_{k=0}^{K} M^{k}\vec{y} + M^{K+1}\vec{\beta}_{m-K}, \quad \text{for all } 0\le K\le m.   
	\end{equation*}
	In particular, we get
	\begin{equation*}
		\vec{\beta}_{m+1}=\sum_{k=0}^{m} M^{k}\vec{y} + M^{m+1}\vec{\beta}_{0}
	\end{equation*}
	We obtain $e_0:=\| \vec{\beta}_0 -\tilde{\vec{\beta}}_0\|_{2} = 0$ and some algebra leads us to
	\begin{align*}
		e_{m+1}:=\| \vec{\beta}_{m+1} -\tilde{\vec{\beta}}_{m+1}\|_{2} & \le  \left\| \sum_{k=0}^{m} M^{k}\vec{y} - \sum_{k=0}^{m} \tilde{M}(T)^{k}\vec{y} \right\|_{2}
        \le \left\|\vec{y} \right\|_{2} \left\| \sum_{k=0}^{m} M^{k} - \tilde{M}(T)^{k} \right\|_{2 \to 2} \\
        &\le \left\|\vec{y} \right\|_{2} \sum_{k=1}^{m} \left\|(M - \tilde{M}(T))(\sum_{s=1}^{k} M^{k-s} \tilde{M}(T)^{s-1}) \right\|_{2 \to 2} \\
        &\le \left\|\vec{y} \right\|_{2} \left\|M - \tilde{M}(T)\right\|_{2 \to 2} \sum_{k=1}^{m}\sum_{s=1}^{k} \left\| M^{k-s} \right\|_{2 \to 2} \left\| \tilde{M}(T)^{s-1}\right\|_{2 \to 2}.
	\end{align*}
    Then, since $\| \tilde{M}^{k} \|_{2 \to 2} \le \| M^{k} \|_{2 \to 2}$ for every $k$, we obtain
    \begin{align*}
        \| \vec{\beta}_{m+1} -\tilde{\vec{\beta}}_{m+1}\|_{2} &\le \left\|\vec{y} \right\|_{2} \left\|M - \tilde{M}(T)\right\|_{2 \to 2} \sum_{k=1}^{m} \sum_{s=1}^{k} \left\| M^{k-s} \right\|_{2 \to 2} \left\| M^{s-1}\right\|_{2 \to 2} \\
        &\le \left\|\vec{y} \right\|_{2} \left\|M - \tilde{M}(T)\right\|_{2 \to 2} \sum_{k=1}^{m} k\left\| M \right\|_{2 \to 2}^{k-1} \\
        &\le \left\|\vec{y} \right\|_{2} \left\|M - \tilde{M}(T)\right\|_{2 \to 2} \frac{1- (m+1)\left\| M \right\|_{2 \to 2}^{m} + m\left\| M \right\|_{2 \to 2}^{m+1} }{(1-\| M \|_{2 \to 2})^{2}}.
    \end{align*}
\end{proof}

After having stated a result as sharp as possible, we will take into account

\begin{corollary}\label{cor:pert1}
    The result of Lemma \ref{lem:pert1} can be relaxed under the assumption that $\|M\|_{2 \to 2} > 2$. Indeed,
    \begin{equation*}
        \frac{1- (L+1)\left\| M \right\|_{2 \to 2}^{L} + L\left\| M \right\|_{2 \to 2}^{L+1} }{(1-\| M \|_{2 \to 2})^{2}} \le \frac{L\|M\|_{2 \to 2}^{L+1}}{(\|M\|_{2 \to 2}-1)^{2}} \le 4L\|M\|_{2 \to 2}^{L-1},
    \end{equation*}
 which implies
    \begin{equation*}
        \| \vec{\beta}_{L} -\tilde{\vec{\beta}}_{L}\|_{2} \le \left\|\vec{y} \right\|_{2} \left\|M - \tilde{M}(T)\right\|_{2 \to 2} 4L\|M\|_{2 \to 2}^{L-1}.
    \end{equation*}
    On the other hand, for $\|M\|_{2 \to 2} \le 2$, 
    \begin{equation*}
        \sum_{k=1}^{m} k\left\| M \right\|_{2 \to 2}^{k-1} \le \sum_{k=1}^{m} k2^{k-1} = 1-(L+1)2^{L}+L2^{L+1} = (L-1)2^{L}+1
    \end{equation*}
    holds. Therefore, a more precise representation could be 
    \begin{equation*}
        \| \vec{\beta}_{L} -\tilde{\vec{\beta}}_{L}\|_{2} \le \left\|\vec{y}  \right\|_{2} \left\|M - \tilde{M}(T)\right\|_{2 \to 2} 4 L\max(2, \|M\|_{2 \to 2})^{L-1}.
    \end{equation*}
\end{corollary}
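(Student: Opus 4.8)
The plan is to start from Lemma \ref{lem:pert1} evaluated at $m=L$ and to replace the exact rational factor appearing there by a clean power bound. Writing $x := \|M\|_{2 \to 2}$ for brevity, the lemma produces the factor
\[
S_L := \sum_{k=1}^{L} k\, x^{k-1} = \frac{1 - (L+1)x^{L} + L x^{L+1}}{(1-x)^{2}},
\]
the closed form being the derivative of the geometric sum $\sum_{k=0}^{L} x^{k}$. The goal is to bound $S_L$ by $4L\max(2,x)^{L-1}$, and the natural split is according to whether $x>2$ or $x\le 2$, which is exactly the dichotomy recorded in the statement.

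In the case $x>2$ I would argue in two steps. First, since $x\ge 1$ we have $(L+1)x^{L}\ge 1$, so the contribution $1-(L+1)x^{L}$ to the numerator is non-positive and may simply be discarded, giving $S_L \le L x^{L+1}/(x-1)^{2}$. Second, I would reduce the target inequality $L x^{L+1}/(x-1)^2 \le 4 L x^{L-1}$ to the scalar statement $x^{2}/(x-1)^{2}\le 4$, i.e.\ $x/(x-1)\le 2$. Since $t\mapsto t/(t-1)=1+1/(t-1)$ is decreasing on $(1,\infty)$, this holds precisely for $x\ge 2$, which is where the threshold value $2$ enters. Substituting this simplified factor back into the bound of Lemma \ref{lem:pert1} yields the first displayed estimate on $\|\vec{\beta}_{L}-\tilde{\vec{\beta}}_{L}\|_{2}$.

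For the complementary case $x\le 2$ I would bound $S_L$ termwise using $x^{k-1}\le 2^{k-1}$, obtaining $S_L \le \sum_{k=1}^{L} k\,2^{k-1}$; evaluating the same closed form at $x=2$ gives $\sum_{k=1}^{L} k\,2^{k-1} = (L-1)2^{L}+1 \le L\,2^{L} = 2L\,2^{L-1} \le 4L\,2^{L-1}$. Combining the two cases under the single expression $\max(2,x)^{L-1}$ and carrying along the prefactor $\|\vec{y}\|_{2}\,\|M-\tilde{M}(T)\|_{2\to 2}$ from Lemma \ref{lem:pert1} produces the final unified bound. The argument is entirely elementary arithmetic, so there is no genuine obstacle; the only point requiring a little care is the monotonicity reduction $x/(x-1)\le 2$, which simultaneously pins the constant $4$ and explains why the case distinction is made precisely at $x=2$.
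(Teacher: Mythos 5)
Your proposal is correct and follows essentially the same route the paper takes: drop the non-positive part $1-(L+1)\|M\|^{L}$ of the numerator to get the $L\|M\|^{L+1}/(\|M\|-1)^{2}$ bound, reduce the constant $4$ to the scalar inequality $\|M\|/(\|M\|-1)\le 2$ valid exactly for $\|M\|\ge 2$, and handle $\|M\|\le 2$ by termwise comparison with the sum $\sum_{k=1}^{L}k\,2^{k-1}=(L-1)2^{L}+1$, merging both cases via $\max(2,\|M\|)^{L-1}$. Your added justifications (the closed form as derivative of the geometric sum, monotonicity of $t\mapsto t/(t-1)$) are exactly the details the paper leaves implicit.
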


Now, before stating the approximation error of the truncated solution of the Jacobi method, we introduce some assumption, later we will show that, in practical applications, such assumptions are not restrictive.

\begin{assumption}\label{ass:decay}
    We assume that $L \in \N $ is large enough such that 
    \begin{equation}\label{eq:ass1:thmdecayerror}
    	2CC_{\Sigma}L e^{\theta/L} c_{q}^{d/2L-d/2} \le (c_{h}\mu)^{-dL/2},
    \end{equation}
    where $C_{\Sigma}$ stems from Lemma \ref{lem:Mbound}.
    Moreover, we assume $T>0$ large enough such that 
    \begin{equation}\label{eq:ass2:thmdecayerror}
        T^{d} \le  e^{\frac{\theta}{2}T}
    \end{equation}
    with $\theta$ given in \eqref{eq:theta}.
\end{assumption}

Then, finally

\begin{theorem}\label{thm:decayerror}
Suppose that Assumptions \ref{ass:kernel} and  \ref{ass:pointset} hold, and suppose that $L,T$ satisfy \ref{ass:decay}.
    Let $\vec{\beta}$ and $\tilde{\vec{\beta}}$ be the solution of the block lower triangular system \eqref{eq:split_system_triangular} and \eqref{eq:perturbed_split} respectively.
    Then, we have
	\begin{equation*}
		\left\| \vec{\beta}_{L} -\tilde{\vec{\beta}}_{L}\right\|_{2} \le \sqrt{N(1)} e^{-\frac{\theta}{2}T} (c_h \mu)^{-dL^{2}}\|f\|_{L_{\infty}(\Omega)}.
	\end{equation*}
\end{theorem}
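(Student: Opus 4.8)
The plan is to reduce the whole statement to the single perturbation estimate of Corollary \ref{cor:pert1} and then feed in the three ingredients already established: the norm bound on $M$ (Lemma \ref{lem:Mbound}), the perturbation bound on $M-\tilde M(T)$ (Lemma \ref{lem:Mpertubationbound}), and a crude bound on the data vector $\mathbf{f}$. The Assumption \ref{ass:decay} is, as I will argue, engineered precisely so that the leftover constant factors collapse into the advertised power of $(c_h\mu)$.

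First I would observe that the relevant iterations are exactly those of Lemma \ref{lem:pert1} with $\vec{y}=\mathbf{f}$: the unperturbed Jacobi iteration reads $\vec{\beta}_{m+1}=\mathbf{f}+M\vec{\beta}_m$ because $M=\id-T_L'$, and the perturbed one replaces $M$ by $\tilde M(T)$. Both $M$ and $\tilde M(T)$ are strictly block-lower-triangular, hence nilpotent of index at most $L$, so by the argument of Theorem \ref{thm:jacobi} the iterate after $L$ steps equals the exact solution in each case; thus $\vec{\beta}_L=\vec{\beta}$ and $\tilde{\vec{\beta}}_L=\tilde{\vec{\beta}}$. Writing $K:=CC_\Sigma\sqrt{2Lc_q^{-d}}(c_h)^{-dL/2}\mu^{-d(L-1)/2}$ for the bound on $\|M\|_{2\to2}$ from Lemma \ref{lem:Mbound}, which exceeds $2$ once $L$ is large since $c_h,\mu\in(0,1)$, Corollary \ref{cor:pert1} gives
\[
\|\vec{\beta}_L-\tilde{\vec{\beta}}_L\|_2\le 4L\,\|\mathbf{f}\|_2\,\|M-\tilde M(T)\|_{2\to2}\,K^{L-1}.
\]

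Next I would substitute the three bounds. For the data I would use $\|\mathbf{f}\|_2\le\sqrt N\,\|f\|_{L_\infty(\Omega)}$ together with $N\le c_q^d N(1)\mu^{-dL}$, established in the proof of Corollary \ref{cor:errorbound} and valid here since $\mu^{-d}>2$ by \eqref{ass:eq:chmu}; this produces the factor $\sqrt{N(1)}$ and a power $\mu^{-dL/2}$. For the perturbation I would insert $\|M-\tilde M(T)\|_{2\to2}\le Ke^\theta T^d e^{-\theta T}$ from Lemma \ref{lem:Mpertubationbound} and absorb $T^d e^{-\theta T}\le e^{-\theta T/2}$ via \eqref{eq:ass2:thmdecayerror}. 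Collecting the product $K\cdot K^{L-1}=K^L$ and expanding it, the $\mu$-powers combine as $\mu^{-dL/2}\cdot\mu^{-dL(L-1)/2}=\mu^{-dL^2/2}$, and together with the $(c_h)^{-dL^2/2}$ carried by $K^L$ this gives exactly $(c_h\mu)^{-dL^2/2}$; everything else is gathered into a constant-type factor $R:=4L\,c_q^{d/2}e^\theta(CC_\Sigma)^L(2L)^{L/2}c_q^{-dL/2}$.

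The remaining and decisive step is to show $R\le(c_h\mu)^{-dL^2/2}$, so that the two halves of the exponent merge into the claimed $(c_h\mu)^{-dL^2}$. Raising Assumption \ref{ass:decay}\,\eqref{eq:ass1:thmdecayerror} to the $L$-th power yields $2^L(CC_\Sigma)^L L^L e^\theta c_q^{d/2}c_q^{-dL/2}\le(c_h\mu)^{-dL^2/2}$, and comparing this with $R$ reduces the claim to the elementary inequality $4L\le(2L)^{L/2}$, which holds for all sufficiently large $L$. I expect this exponent bookkeeping to be the main obstacle: one must carefully track the powers of $\mu$, $c_h$, $c_q$ and $L$ and recognize that \eqref{eq:ass1:thmdecayerror} is exactly the hypothesis that tames the $L$-fold product $K^L$. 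Assembling the pieces then gives $\|\vec{\beta}_L-\tilde{\vec{\beta}}_L\|_2\le\sqrt{N(1)}\,e^{-\theta T/2}(c_h\mu)^{-dL^2}\|f\|_{L_\infty(\Omega)}$, as asserted.
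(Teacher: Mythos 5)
Your proposal is correct and follows essentially the same route as the paper's own proof: Corollary \ref{cor:pert1} with $\vec{y}=\mathbf{f}$, the bounds from Lemma \ref{lem:Mbound} and Lemma \ref{lem:Mpertubationbound}, the estimate $N \le c_q^d N(1)\mu^{-dL}$ to extract $\sqrt{N(1)}$, and Assumption \ref{ass:decay} to absorb $T^d e^{-\theta T}$ and the remaining constants into $(c_h\mu)^{-dL^2}$. In fact your final bookkeeping (raising \eqref{eq:ass1:thmdecayerror} to the $L$-th power and reducing to $4L \le (2L)^{L/2}$) makes explicit a step the paper only asserts, and your observation that $\tilde{M}(T)$ is also nilpotent, so the $L$-th Jacobi iterates coincide with the exact solutions, is a detail the paper leaves implicit.
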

\begin{proof}
    Using Lemma \ref{lem:Mbound}, we have 
    \begin{align*}
         \left\|   M \right\|_{2 \to 2} \le CC_{\Sigma} \sqrt{2 L e^{-\theta} c_{q}^{-d}} (c_h)^{-\frac{dL}{2}}\mu^{-\frac{d(L-1)}{2}},
    \end{align*}
    which safely bounds also $\max(2, \|M\|_{2 \to 2})$.

    Using Lemma \ref{lem:pert1}, precisely Corollary \ref{cor:pert1}, and Lemma \ref{lem:Mpertubationbound}, we obtain the bound
    \begin{align*}
        \left\| \vec{\beta}_{L} -\tilde{\vec{\beta}}_{L}\right\|_{2}  &\le \left\|\mathbf{f} \right\|_{2} 4\left\|M - \tilde{M}(T)\right\|_{2 \to 2} L\left\| M \right\|_{2 \to 2}^{L-1} \\
        &\le 4L\left\|\mathbf{f} \right\|_{2} e^{\theta} T^{d} e^{-\theta T} \left(CC_{\Sigma} \sqrt{2 L c_{q}^{-d}} (c_h)^{-\frac{dL}{2}}\mu^{-\frac{d(L-1)}{2}}\right)^{L} \\
        &\le 4L\|f\|_{L_{\infty}(\Omega)} \sqrt{N} e^{\theta} T^{d} e^{-\theta T} \left(CC_{\Sigma} \sqrt{2 L c_{q}^{-d}} (c_h)^{-\frac{dL}{2}}\mu^{-\frac{d(L-1)}{2}}\right)^{L}. 
    \end{align*}
    Thus, \eqref{eq:Nmu2} leads to
    \begin{align*}
        \left\| \vec{\beta}_{L} -\tilde{\vec{\beta}}_{L}\right\|_{2} &\le 4L\|f\|_{L_{\infty}(\Omega)} \sqrt{N(1) c_{q}^{d} \mu^{-dL}} e^{\theta} T^{d} e^{-\theta T} \left(CC_{\Sigma} \sqrt{2 L c_{q}^{-d}} (c_h)^{-\frac{dL}{2}}\mu^{-\frac{d(L-1)}{2}}\right)^{L} \\
        &= 4L\sqrt{N(1)}\|f\|_{L_{\infty}(\Omega)} e^{\theta} T^{d} e^{-\theta T} \left((CC_{\Sigma})^{2} 2 L c_{q}^{d/L-d} c_{h}^{-dL} \mu^{-dL} \right)^{\frac{L}{2}}.
    \end{align*}
    Then, Assumption \ref{ass:decay} implies $T^{d} e^{-\theta T} \le e^{-\frac{\theta}{2}T}$ and
    \begin{equation*}
        4Le^{\theta}\left((CC_{\Sigma})^{2} 2 L c_{q}^{d/L-d} c_{h}^{-dL} \mu^{-dL} \right)^{\frac{L}{2}} \le (c_{h} \mu)^{-dL^{2}},
    \end{equation*}
    which finishes the proof.
\end{proof}

Next, we couple the thresholding error to the CG error.
\subsection{Error coupling}\label{subsec:errorcoupling}
We now turn to function recontruction.
Now we have to consider a perturbation of the block diagonal system \eqref{eq:blockdiagonal}, i.e,
\begin{equation*}
	D_{L} \tilde{\vec{\alpha}} = \tilde{\vec{\beta}} \Longleftrightarrow A_{\ell} \tilde{\vec{\alpha}}_{\ell}=\tilde{\vec{\beta}}_{\ell}, \quad 1\le \ell \le L.
\end{equation*} 
We recall from Equation \eqref{eq:exactjacobi} the relation $\vec{\beta}=(T^{\prime}_{L})^{-1}\vec{f}$.
We obtain the following Lemma:
\begin{lemma}\label{lem:alphabound}
	Suppose that Assumptions \ref{ass:kernel}, \ref{ass:pointset} and \ref{ass:decay} hold. Let $f \in H^{\tau}(\R^d)$ and let $\vec{\alpha} \in \R^{N}$ be the solutions of the \eqref{eq:blockdiagonal} and $\tilde{\vec{\alpha}} \in \R^{N}$ be the solution of the perturbed system. Then, we get the bound
    \begin{equation}\label{eq:alphabound}
        \| \tilde{\vec{\alpha}}-\vec{\alpha} \|_{2} \le q_{1}^{d} \frac{(1+4M_{d}^{2}\nu^{2}c_{q}^{2})^{\tau}}{C_{d}c_{\Phi}} \sqrt{N(1)}e^{-\frac{\theta}{4}T}\|f\|_{L_{\infty}(\Omega)}
    \end{equation}
    holds.
\end{lemma}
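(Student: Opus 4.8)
The plan is to pass through the block diagonal solve and reduce everything to the already-established perturbation bound on $\vec\beta$. First I would subtract the two block diagonal systems $D_{L}\vec\alpha=\vec\beta$ and $D_{L}\tilde{\vec\alpha}=\tilde{\vec\beta}$ (see the display preceding the Lemma). Since $D_{L}$ is invertible, this gives $\vec\alpha-\tilde{\vec\alpha}=D_{L}^{-1}(\vec\beta-\tilde{\vec\beta})$ and hence
\begin{equation*}
\|\vec\alpha-\tilde{\vec\alpha}\|_{2}\le \|D_{L}^{-1}\|_{2\to 2}\,\|\vec\beta-\tilde{\vec\beta}\|_{2}.
\end{equation*}
Because $D_{L}$ is block diagonal with diagonal blocks $A_{\ell}$, its inverse is block diagonal with blocks $A_{\ell}^{-1}$, so $\|D_{L}^{-1}\|_{2\to 2}=\max_{1\le\ell\le L}\|A_{\ell}^{-1}\|_{2\to 2}$.

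Next I would estimate this maximum using the bound \eqref{eq:Aelllower} from the proof of Lemma \ref{lem:condA}, which after applying quasi-uniformity $\delta_{\ell}/q_{\ell}=\nu h_{\ell}/q_{\ell}\le \nu c_{q}$ reads $\|A_{\ell}^{-1}\|_{2\to 2}\le q_{\ell}^{d}(C_{d}c_{\Phi})^{-1}(1+4M_{d}^{2}\nu^{2}c_{q}^{2})^{\tau}$. Since the point sets become denser with increasing level, the separation distances $q_{\ell}$ decrease, so the maximum over $\ell$ is attained at the coarsest level $\ell=1$. This produces exactly the prefactor
\begin{equation*}
\|D_{L}^{-1}\|_{2\to 2}\le q_{1}^{d}\,\frac{(1+4M_{d}^{2}\nu^{2}c_{q}^{2})^{\tau}}{C_{d}c_{\Phi}}
\end{equation*}
appearing in the assertion.

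For the second factor I would invoke Theorem \ref{thm:decayerror}, which bounds the thresholding error of the Jacobi solution by $\|\vec\beta_{L}-\tilde{\vec\beta}_{L}\|_{2}\le \sqrt{N(1)}\,e^{-\theta T/2}(c_{h}\mu)^{-dL^{2}}\|f\|_{L_{\infty}(\Omega)}$. Multiplying the two bounds already reproduces the claimed prefactor together with the $\sqrt{N(1)}\|f\|_{L_{\infty}(\Omega)}$ term, but it leaves the decay factor in the form $e^{-\theta T/2}(c_{h}\mu)^{-dL^{2}}$ instead of the asserted $e^{-\theta T/4}$.

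The main obstacle is therefore the final absorption step: I must show that the level-dependent growth is dominated by half of the available exponential decay, i.e. $(c_{h}\mu)^{-dL^{2}}\le e^{\theta T/4}$, so that $e^{-\theta T/2}(c_{h}\mu)^{-dL^{2}}\le e^{-\theta T/4}$. Equivalently this demands $T\gtrsim \tfrac{4dL^{2}}{\theta}\log((c_{h}\mu)^{-1})$, which is precisely the regime in which $T$ is taken large relative to $L$ in Assumption \ref{ass:decay}. I would verify that \eqref{eq:ass1:thmdecayerror}–\eqref{eq:ass2:thmdecayerror}, together with the standing conditions $c_{h}\mu<1$ and \eqref{ass:eq:chmu} from Assumption \ref{ass:pointset}, force this inequality; carrying out that estimate then collapses the decay factor to $e^{-\theta T/4}$ and yields the stated bound. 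This interplay between $T$ and $L$, rather than any of the linear-algebra steps, is the delicate part of the argument.
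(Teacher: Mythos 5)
Your argument follows the paper's own proof step for step: the reduction $\|\vec{\alpha}-\tilde{\vec{\alpha}}\|_{2}\le\|D_{L}^{-1}\|_{2\to2}\,\|\vec{\beta}-\tilde{\vec{\beta}}\|_{2}$, the bound $\|D_{L}^{-1}\|_{2\to2}\le q_{1}^{d}(C_{d}c_{\Phi})^{-1}(1+4M_{d}^{2}\nu^{2}c_{q}^{2})^{\tau}$ via \eqref{eq:Aelllower} together with quasi-uniformity and monotonicity of $q_{\ell}$, and the invocation of Theorem \ref{thm:decayerror} are all exactly what the paper does. The single place where your plan diverges is the final absorption step, and there it would fail as described.

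You propose to \emph{derive} the inequality $(c_{h}\mu)^{-dL^{2}}\le e^{\theta T/4}$, i.e.\ $T\ge\frac{-4dL^{2}}{\theta}\ln(c_{h}\mu)$, from Assumption \ref{ass:decay} together with $c_{h}\mu<1$ and \eqref{ass:eq:chmu}. This cannot work, because none of those conditions couples $T$ to $L$. Condition \eqref{eq:ass1:thmdecayerror} constrains only $L$ in terms of fixed constants, and condition \eqref{eq:ass2:thmdecayerror}, namely $T^{d}\le e^{\theta T/2}$, is satisfied by every sufficiently large $T$ depending only on $d$ and $\theta$ (for instance, with $d=2$ and $\theta=1$, any $T\ge 10$ works), while the inequality you need forces $T$ to grow like $L^{2}\ln\bigl((c_{h}\mu)^{-1}\bigr)$, which is unbounded as $L$ increases. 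So for large $L$ one can satisfy Assumption \ref{ass:decay} with a threshold $T$ that badly violates $T\ge\frac{-4dL^{2}}{\theta}\ln(c_{h}\mu)$, and the asserted collapse of $e^{-\theta T/2}(c_{h}\mu)^{-dL^{2}}$ to $e^{-\theta T/4}$ is then false. The paper does not derive this inequality either: its proof simply \emph{imposes} it, stating the final bound ``for $T \ge \frac{-4dL^{2}}{\theta}\ln(c_{h}\mu)$'' (see \eqref{eq:Testimate}), and the remark after Theorem \ref{thm:approximant_difference_with_perturbation} then fixes $T$ to exactly this value. The repair is therefore not a cleverer estimate but an honest additional hypothesis: require (or choose) $T\ge\frac{-4dL^{2}}{\theta}\ln(c_{h}\mu)$, after which your chain of inequalities closes and yields \eqref{eq:alphabound}.
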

\begin{proof}
   The bound
	%\begin{equation*}
		$\| \tilde{\vec{\alpha}}-\vec{\alpha} \|_2 \le \|D_{L}^{-1}\|_{2\to 2} \|\vec{\beta} -\tilde{\vec{\beta}}\|_2$
   % \end{equation*}
    implies
    \begin{align*}
        \| \tilde{\vec{\alpha}}-\vec{\alpha} \|_{2} & \le \max_{1 \le \ell \le L}{\|A_{\ell}^{-1}\|_{2 \to 2}} \|\vec{\beta} -\tilde{\vec{\beta}} \|_{2}  \le q_{1}^{d}(C_{d}c_{\Phi})^{-1}(1+4M_{d}^{2}\nu^{2}c_{q}^{2})^{\tau} \|\vec{\beta} -\tilde{\vec{\beta}} \|_{2},
	\end{align*}
	where we used \eqref{eq:condAnonquasiuniform}.
	Thus, we obtain using Theorem \ref{thm:decayerror}
	\begin{align*}
		\| \tilde{\vec{\alpha}}-\vec{\alpha} \|_{2} \le q_{1}^{d} \frac{(1+4M_{d}^{2}\nu^{2}c_{q}^{2})^{\tau}}{C_{d}c_{\Phi}} \sqrt{N(1)} e^{-\frac{\theta}{2}T} (c_{h}\mu)^{-dL^{2}} \|f\|_{L_{\infty}(\Omega)}.
	\end{align*}
    Using
    \begin{equation} \label{eq:Testimate}
        (c_{h}\mu)^{-dL^{2}} \le e^{\frac{\theta}{4}T} \Leftrightarrow -dL^{2}\ln(c_{h}\mu) \le \frac{\theta}{4}T
    \end{equation}
    we have
    \begin{equation}
        \| \tilde{\vec{\alpha}}-\vec{\alpha} \|_{2} \le q_{1}^{d} \frac{(1+4M_{d}^{2}\nu^{2}c_{q}^{2})^{\tau}}{C_{d}c_{\Phi}} \sqrt{N(1)} e^{-\frac{\theta}{4}T} \|f\|_{L_{\infty}(\Omega)}, \quad \text{for} \quad T \ge \frac{-4dL^{2}}{\theta} \ln(c_{h}\mu).
    \end{equation}
\end{proof} 

Moreover, the following result will prove useful itself.

\begin{corollary}\label{cor:deltaalphabound}
    Suppose that Assumptions  \ref{ass:kernel}, \ref{ass:pointset} and \ref{ass:decay} hold and let $\vec{\alpha} \in \R^{N}$ be the solution of  \eqref{eq:blockdiagonal} and $\tilde{\vec{\alpha}} \in \R^{N}$ be the solution of the perturbed system. Then defining $\vec{\alpha}(\delta)$ such that
    \begin{equation*}
        \alpha_{i}^{\ell}(\delta) = \delta_{\ell}^{-d/2}\alpha_{i}^{\ell},
    \end{equation*}
    we obtain that the following bound
    \begin{equation}\label{eq:deltaalphabound}
        \| \tilde{\vec{\alpha}}(\delta)-\vec{\alpha}(\delta) \|_{2} \le q_{1}^{d/2} \frac{(1+4M_{d}^{2}\nu^{2}c_{q}^{2})^{\tau}}{\nu^{d/2}C_{d}c_{\Phi}} \sqrt{N(1)}e^{-\frac{\theta}{4}T} \|f\|_{L_{\infty}(\Omega)}
    \end{equation}
    holds true.
\end{corollary}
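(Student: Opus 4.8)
The plan is to mirror the argument of Lemma~\ref{lem:alphabound}, but to fold the level-dependent scaling $\delta_\ell^{-d/2}$ into the block-diagonal operator \emph{before} estimating a norm. First I would introduce the diagonal rescaling matrix $S \in \R^{N\times N}$ whose block acting on level $\ell$ is $\delta_\ell^{-d/2}\,\id_{N(\ell)}$, so that by the very definition of $\vec{\alpha}(\delta)$ one has $\vec{\alpha}(\delta)=S\vec{\alpha}$ and $\tilde{\vec{\alpha}}(\delta)=S\tilde{\vec{\alpha}}$, hence $\tilde{\vec{\alpha}}(\delta)-\vec{\alpha}(\delta)=S(\tilde{\vec{\alpha}}-\vec{\alpha})$. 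Since $\vec{\alpha}$ and $\tilde{\vec{\alpha}}$ both solve the block-diagonal system \eqref{eq:blockdiagonal} with right-hand sides $\vec{\beta}$ and $\tilde{\vec{\beta}}$, subtracting gives $\tilde{\vec{\alpha}}-\vec{\alpha}=D_L^{-1}(\tilde{\vec{\beta}}-\vec{\beta})$, so that
\begin{equation*}
\tilde{\vec{\alpha}}(\delta)-\vec{\alpha}(\delta)=S\,D_L^{-1}\,(\tilde{\vec{\beta}}-\vec{\beta}).
\end{equation*}

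The heart of the argument is the norm of $S\,D_L^{-1}$. Because it is again block diagonal with $\ell$-th block $\delta_\ell^{-d/2}A_\ell^{-1}$, its spectral norm is $\max_{1\le\ell\le L}\delta_\ell^{-d/2}\|A_\ell^{-1}\|_{2\to2}$. Inserting the bound \eqref{eq:Aelllower} on $\|A_\ell^{-1}\|_{2\to2}$ together with $\delta_\ell=\nu h_\ell$ from \eqref{eq:deltadef} yields
\begin{equation*}
\delta_\ell^{-d/2}\|A_\ell^{-1}\|_{2\to2}\le \nu^{-d/2}\,h_\ell^{-d/2}\,q_\ell^{d}\,(C_d c_\Phi)^{-1}\bigl(1+4M_d^2\nu^2 c_q^2\bigr)^{\tau}.
\end{equation*}
Then quasi-uniformity \eqref{eq:quasiuniform}, used in the direction $h_\ell\ge q_\ell$, gives $h_\ell^{-d/2}q_\ell^{d}\le q_\ell^{d/2}$, and monotonicity of the separation distances ($q_\ell\le q_1$, exactly as in Lemma~\ref{lem:alphabound}) collapses this to the level-independent factor $\nu^{-d/2}q_1^{d/2}(C_d c_\Phi)^{-1}(1+4M_d^2\nu^2 c_q^2)^{\tau}$, which is precisely the prefactor appearing in \eqref{eq:deltaalphabound}.

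Finally I would combine $\|\tilde{\vec{\alpha}}(\delta)-\vec{\alpha}(\delta)\|_2\le\|S D_L^{-1}\|_{2\to2}\,\|\tilde{\vec{\beta}}-\vec{\beta}\|_2$ with the thresholding estimate of Theorem~\ref{thm:decayerror}, invoking the choice of $T$ recorded in \eqref{eq:Testimate} so that the factor $(c_h\mu)^{-dL^2}$ is absorbed into $e^{-\theta T/4}$ and one recovers $\|\tilde{\vec{\beta}}-\vec{\beta}\|_2\le\sqrt{N(1)}\,e^{-\theta T/4}\|f\|_{L_\infty(\Omega)}$; this is the same $T$-reduction already performed in Lemma~\ref{lem:alphabound}. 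I expect the only genuinely delicate point to be the middle norm estimate: one must apply quasi-uniformity in the correct direction, bounding $h_\ell$ \emph{from below} by $q_\ell$ rather than from above, so that the net exponent on $q_\ell$ is $+d/2$ and is dominated by $q_1^{d/2}$. Every remaining step is a direct transcription of the already completed argument of Lemma~\ref{lem:alphabound}.
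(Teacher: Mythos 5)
Your proposal is correct and follows essentially the same route as the paper: your operator $S\,D_L^{-1}$ is exactly the inverse of the paper's rescaled block-diagonal matrix $D_L(\delta)=\mathrm{Diag}(\delta_1^{d/2}A_1,\dots,\delta_L^{d/2}A_L)$, and the norm estimate via \eqref{eq:Aelllower}, quasi-uniformity in the direction $h_\ell\ge q_\ell$, and the absorption of $(c_h\mu)^{-dL^2}$ into $e^{-\theta T/4}$ through Theorem \ref{thm:decayerror} and \eqref{eq:Testimate} all coincide with the paper's argument.
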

\begin{proof}
    It is sufficient to show that system \eqref{eq:blockdiagonal} is equivalent to
    \begin{equation*}
        D_{L}(\delta) \vec{\alpha}(\delta) = \vec{\beta},
    \end{equation*}
    where $D_{L}(\delta)$ is the diagonal block matrix Diag($\delta_{1}^{d/2} A_{1}$, $\dots$, $\delta_{L}^{d/2} A_{L}$). Then, we have
    \begin{equation*}
		\| \tilde{\vec{\alpha}}(\delta)-\vec{\alpha}(\delta) \| \le \|D_{L}(\delta)^{-1}\| \|\vec{\beta} -\tilde{\vec{\beta}} \|,
    \end{equation*}
    which implies
    \begin{align*}
        \| \tilde{\vec{\alpha}}(\delta)-\vec{\alpha}(\delta) \|_{2} & \le \max_{1 \le \ell \le L}{\delta_{\ell}^{-d/2}\|A_{\ell}^{-1}\|_{2 \to 2}} \|\vec{\beta} -\tilde{\vec{\beta}} \|_{2} \\
        & \le \max_{1 \le \ell \le L}{\delta_{\ell}^{-d/2}q_{\ell}^{d}}(C_{d}c_{\Phi})^{-1}(1+4M_{d}^{2}\nu^{2}c_{q}^{2})^{\tau} \|\vec{\beta} -\tilde{\vec{\beta}} \|_{2} \\
        & \le \max_{1 \le \ell \le L}{q_{\ell}^{d/2}} \nu^{-d/2} (C_{d}c_{\Phi})^{-1}(1+4M_{d}^{2}\nu^{2}c_{q}^{2})^{\tau} \|\vec{\beta} -\tilde{\vec{\beta}} \|_{2} \\
        & \le q_{1}^{d/2} \nu^{-d/2} (C_{d}c_{\Phi})^{-1}(1+4M_{d}^{2}\nu^{2}c_{q}^{2})^{\tau} \|\vec{\beta} -\tilde{\vec{\beta}} \|_{2}.
	\end{align*}
    Lastly, with Theorem \ref{thm:decayerror} we have
    \begin{equation*}
        \| \tilde{\vec{\alpha}}(\delta)-\vec{\alpha}(\delta) \|_{2} \le q_{1}^{d/2} \nu^{-d/2} (C_{d}c_{\Phi})^{-1}(1+4M_{d}^{2}\nu^{2}c_{q}^{2})^{\tau} \sqrt{N(1)}e^{-\frac{\theta}{4}T}\|f\|_{L_{\infty}(\Omega)}
    \end{equation*}
    for $T \ge \frac{-4dL^{2}}{\theta} \ln(c_{h}\mu)$.
\end{proof}

Finally, we can state

\begin{theorem}\label{thm:approximant_difference_with_perturbation}
    Let $f \in H^{\tau}(\Omega)$, $L \in \N$. Under Assumptions \ref{ass:kernel}, \ref{ass:pointset} and \ref{ass:decay}, let $f_{L}$ as in \eqref{eq:fapproximation} be the function approximation with $\vec{\alpha}$ the solution of \eqref{eq:split} and $\tilde{f}_{L}$ the perturbed approximation with $\tilde{\vec{\alpha}}$ the solution of \eqref{eq:perturbed_split}. Then
    \begin{equation}\label{eq:approximant_difference_with_perturbation}
        \|f_{L}-\tilde{f}_{L}\|_{L^{2}(\Omega)} \le c_{f} \sqrt{L} e^{-\frac{\theta}{4}T} \|f\|_{L_{\infty}(\Omega)},
    \end{equation}
    with $c_{f}^{2} = \frac{N(1)(1+2 \nu c_{q})^{d} \pi^{d/2} q_{1}^{d}(1+4M_{d}^{2}\nu^{2}c_{q}^{2})^{2\tau}}{\nu^{d}(C_{d}c_{\Phi})^{2} \Gamma(\frac{d}{2}+1)}$.
\end{theorem}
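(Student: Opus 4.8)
The plan is to expand the difference $f_{L}-\tilde f_{L}$ in the scaled kernels, reduce its $L^{2}$ norm to the $\delta$-normalised coefficient difference through a level-wise stability estimate, and then insert Corollary \ref{cor:deltaalphabound}. Writing $\vec{e}:=\vec{\alpha}-\tilde{\vec{\alpha}}$ with level blocks $\vec{e}^{(\ell)}=(e^{(\ell)}_{n})_{n}$ and setting $g_{\ell}:=\sum_{n=1}^{N(\ell)}e^{(\ell)}_{n}\Phi_{\delta_{\ell}}(\cdot-\vec{x}^{(\ell)}_{n})$, the representation \eqref{eq:fapproximation} gives $f_{L}-\tilde f_{L}=\sum_{\ell=1}^{L}g_{\ell}$. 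First I would peel off the $L$ summands by Cauchy--Schwarz,
\begin{equation*}
    \|f_{L}-\tilde f_{L}\|_{L^{2}(\Omega)}^{2}\le L\sum_{\ell=1}^{L}\|g_{\ell}\|_{L^{2}(\Omega)}^{2},
\end{equation*}
which is exactly where the factor $\sqrt{L}$ in \eqref{eq:approximant_difference_with_perturbation} originates.

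The core step is a single-level stability bound. Since $\Phi$ is radial and hence even, I expand
\begin{equation*}
    \|g_{\ell}\|_{L^{2}(\Omega)}^{2}=\sum_{n,m}e^{(\ell)}_{n}e^{(\ell)}_{m}\,(\Phi_{\delta_{\ell}}*\Phi_{\delta_{\ell}})(\vec{x}^{(\ell)}_{n}-\vec{x}^{(\ell)}_{m})=(\vec{e}^{(\ell)})^{T}G^{(\ell)}\vec{e}^{(\ell)},
\end{equation*}
and control the symmetric Gram matrix $G^{(\ell)}$ by its row sums, using $\|G^{(\ell)}\|_{2\to2}\le\|G^{(\ell)}\|_{1\to1}$. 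Two ingredients enter: because $\mathrm{supp}\,\Phi\subset B_{1}(0)$, the autocorrelation $\Phi_{\delta_{\ell}}*\Phi_{\delta_{\ell}}$ is supported in $B_{2\delta_{\ell}}(0)$, so the separation distance together with $\delta_{\ell}=\nu h_{\ell}\le\nu c_{q}q_{\ell}$ bounds the number of nodes within $2\delta_{\ell}$ of any fixed node by $(1+2\nu c_{q})^{d}$; and the Cauchy--Schwarz estimate $\|\Phi_{\delta_{\ell}}*\Phi_{\delta_{\ell}}\|_{L^{\infty}}\le\|\Phi_{\delta_{\ell}}\|_{L^{2}(\R^{d})}^{2}=\delta_{\ell}^{-d}\|\Phi\|_{L^{2}(\R^{d})}^{2}$ bounds each entry, where the normalisation $\|\Phi\|_{L^{\infty}}\le1$ gives $\|\Phi\|_{L^{2}(\R^{d})}^{2}\le\mathrm{Vol}(B_{1})=\pi^{d/2}/\Gamma(\tfrac d2+1)$, producing the volume constant of $c_{f}$.

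Combining these yields $\|g_{\ell}\|_{L^{2}(\Omega)}^{2}\le(1+2\nu c_{q})^{d}\,\delta_{\ell}^{-d}\,\tfrac{\pi^{d/2}}{\Gamma(d/2+1)}\,\|\vec{e}^{(\ell)}\|_{2}^{2}$. The decisive bookkeeping is the $\delta$-rescaling: from $e^{(\ell)}_{n}=\delta_{\ell}^{d/2}e^{(\ell)}_{n}(\delta)$ one gets $\|\vec{e}^{(\ell)}\|_{2}^{2}=\delta_{\ell}^{d}\|\vec{e}^{(\ell)}(\delta)\|_{2}^{2}$, and the powers $\delta_{\ell}^{-d}$ and $\delta_{\ell}^{d}$ cancel exactly, leaving a bound uniform in $\ell$. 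Summing over $\ell$ turns $\sum_{\ell}\|\vec{e}^{(\ell)}(\delta)\|_{2}^{2}$ into $\|\tilde{\vec{\alpha}}(\delta)-\vec{\alpha}(\delta)\|_{2}^{2}$, so that inserting the square of Corollary \ref{cor:deltaalphabound} reproduces precisely the constant $c_{f}^{2}$ together with the factor $e^{-\theta T/2}$; taking square roots gives \eqref{eq:approximant_difference_with_perturbation}.

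I expect the single-level stability estimate to be the main obstacle: one must take care that the point-count uses the separation distance $q_{\ell}$ rather than the fill distance $h_{\ell}$, that the autocorrelation support is $2\delta_{\ell}$ (not $\delta_{\ell}$), and that the $\delta_{\ell}$-powers cancel cleanly so that the resulting constant is genuinely independent of the level. Once this uniform bound is in place, the Cauchy--Schwarz splitting over levels and the substitution of Corollary \ref{cor:deltaalphabound} are routine.
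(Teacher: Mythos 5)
Your proposal is correct and follows essentially the same route as the paper's proof: Cauchy--Schwarz across levels to extract the factor $L$, the compact-support neighbor count $(1+2\nu c_{q})^{d}$, the volume bound $\pi^{d/2}\delta_{\ell}^{-d}/\Gamma(\frac{d}{2}+1)$, the exact cancellation of $\delta_{\ell}$-powers via the rescaled coefficients, and finally Corollary \ref{cor:deltaalphabound} -- only your handling of the cross terms differs cosmetically (autocorrelation Gram matrix with $\|G^{(\ell)}\|_{2\to2}\le\|G^{(\ell)}\|_{1\to1}$, where the paper uses the pointwise estimate $p_{i}p_{j}\le\frac{1}{2}(p_{i}^{2}+p_{j}^{2})$ on overlapping pairs), and both yield the identical constant $c_{f}$. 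One small repair: your claimed identity $\|g_{\ell}\|_{L^{2}(\Omega)}^{2}=(\vec{e}^{(\ell)})^{T}G^{(\ell)}\vec{e}^{(\ell)}$ only holds with the integral taken over $\R^{d}$; since the coefficients have mixed signs you cannot compare termwise over $\Omega$, so you must first insert $\|g_{\ell}\|_{L^{2}(\Omega)}\le\|g_{\ell}\|_{L^{2}(\R^{d})}$ (valid because the integrand is nonnegative) and then expand -- after which everything goes through as you describe.
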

\begin{proof}
    First, we want to explicitly state the sum over the levels, highlighting the difference from the coefficients of the two solutions:
    \begin{align*}
        \|f_{L}-\tilde{f}_{L}\|_{L^{2}(\Omega)}^{2} & = \left\|\sum_{\vec{x}_{i}^{\ell} \in \cup_{\ell=1}^{L} X_{\ell}} \alpha_{i}^{\ell}\Phi_{\delta_{\ell}}(\vec{x}_{i}^{\ell}-\cdot)-\sum_{\vec{x}_{i}^{\ell} \in \cup_{\ell=1}^{L} X_{\ell}}\tilde{\alpha}_{i}^{\ell}\Phi_{\delta_{\ell}}(\vec{x}_{i}^{\ell}-\cdot)\right\|_{L^{2}(\Omega)}^{2} \\
        & = \left\|\sum_{\vec{x}_{i}^{\ell} \in \cup_{\ell=1}^{L} X_{\ell}} (\alpha_{i}^{\ell}-\tilde{\alpha}_{i}^{\ell})\Phi_{\delta_{\ell}}(\vec{x}_{i}^{\ell}-\cdot)\right\|_{L^{2}(\Omega)}^{2}  = \left\| \sum_{\ell=1}^{L} \sum_{i=1}^{N(\ell)} (\alpha_{i}^{\ell}-\tilde{\alpha}_{i}^{\ell})\Phi_{\delta_{\ell}}(\vec{x}_{i}^{\ell}-\cdot)\right\|_{L^{2}(\Omega)}^{2} 
    \end{align*}
    Then, we will directly work with the norm to have
    \begin{align*}
       & \int_{\Omega} \left|\sum_{\ell=1}^{L} \sum_{i=1}^{N(\ell)} (\alpha_{i}^{\ell}-\tilde{\alpha}_{i}^{\ell})\Phi_{\delta_{\ell}}(\vec{x}_{i}^{\ell}-\vec{x}) \right|^{2} d\vec{x}  \le \int_{\Omega} L \sum_{\ell=1}^{L} \left|\sum_{i=1}^{N(\ell)} (\alpha_{i}^{\ell}-\tilde{\alpha}_{i}^{\ell})\Phi_{\delta_{\ell}}(\vec{x}_{i}^{\ell}-\vec{x}) \right|^{2} d\vec{x} \\
        & \le L \sum_{\ell=1}^{L} \int_{\Omega} \left|\sum_{i=1}^{N(\ell)} (\alpha_{i}^{\ell}-\tilde{\alpha}_{i}^{\ell})\Phi_{\delta_{\ell}}(\vec{x}_{i}^{\ell}-\vec{x}) \right|^{2} d\vec{x} 
         = L \sum_{\ell=1}^{L} \left\| \sum_{i=1}^{N(\ell)} (\alpha_{i}^{\ell}-\tilde{\alpha}_{i}^{\ell})\Phi_{\delta_{\ell}}(\vec{x}_{i}^{\ell}-\cdot)\right\|_{L^{2}(\Omega)}^{2}.
    \end{align*}
    Thus we have
    \begin{equation*}
        \|f_{L}-\tilde{f}_{L}\|_{L^{2}(\Omega)}^{2} \le L \sum_{\ell=1}^{L} \int_{\Omega}  \sum_{i=1}^{N(\ell)}  \sum_{j=1}^{N(\ell)} (\alpha_{j}^{\ell}-\tilde{\alpha}_{j}^{\ell})(\alpha_{i}^{\ell}-\tilde{\alpha}_{i}^{\ell})\Phi_{\delta_{\ell}}(\vec{x}_{j}^{\ell}-\vec{x})\Phi_{\delta_{\ell}}(\vec{x}_{i}^{\ell}-\vec{x}) d\vec{x}
    \end{equation*}
    Introducing $p_{i}(\vec{x}) = (\alpha_{i}^{\ell}-\tilde{\alpha}_{i}^{\ell})\Phi_{\delta_{\ell}}(\vec{x}_{i}^{\ell}-\vec{x})$, for we have that $p_{i}(\vec{x})p_{j}(\vec{x}) \neq 0$ only if $\|\vec{x}_{i}^{\ell}-\vec{x}_{j}^{\ell}\|_{2} \le 2\delta_{\ell}$. Indeed, as for instance in \eqref{eq:rowcost} we observe that for fixed $i$ the number of choices for $j$, such that $p_{i}(\vec{x})p_{j}(\vec{x}) \neq 0$, is bounded by $(1+2\nu c_{q})^{d}$. Thus, given the set of non-zero pairs
    \begin{equation*}
        \{(i,j) \, : \, \|\vec{x}_{i}^{\ell}-\vec{x}_{j}^{\ell}\|_{2} \le 2\delta_{\ell}, \, 1\le i \le N(\ell), \, 1\le i \le N(\ell)\},
    \end{equation*}
    the occurrence of a fixed index $\iota$ is bounded by $2(1+2\nu c_{q})^{d}$. Therefore,
    \begin{equation*}
        \sum_{i=1}^{N(\ell)} \sum_{j=1}^{N(\ell)} p_{i}(\vec{x})p_{j}(\vec{x}) \le \frac{1}{2} \sum_{i=1}^{N(\ell)} \sum_{\vec{x}_{j} \in B_{2\delta_{\ell}}(\vec{x}_{i})} p_{i}^{2}(\vec{x}) + p_{j}^{2}(\vec{x}) \le (1+2 \nu c_{q})^{d} \sum_{\iota=1}^{N(\ell)} p_{\iota}^{2}(\vec{x}).
    \end{equation*}
    Consequently,
    \begin{align*}
        \|f_{L}-\tilde{f}_{L}\|_{L^{2}(\Omega)}^{2} & \le L \sum_{\ell=1}^{L} \int_{\Omega} (1+2 \nu c_{q})^{d} \sum_{i=1}^{N(\ell)} (\alpha_{i}^{\ell}-\tilde{\alpha}_{i}^{\ell})^{2}\Phi_{\delta_{\ell}}^{2}(\vec{x}_{i}^{\ell}-\vec{x}) \\
        & \le (1+2 \nu c_{q})^{d} L \sum_{\ell=1}^{L} \sum_{i=1}^{N(\ell)} \int_{B_{\delta_{\ell}}(\vec{x}_{i})} (\alpha_{i}^{\ell}-\tilde{\alpha}_{i}^{\ell})^{2}\Phi_{\delta_{\ell}}^{2}(\vec{x}_{i}^{\ell}-\vec{x}) \\
        & \le (1+2 \nu c_{q})^{d} L \sum_{\ell=1}^{L} \sum_{i=1}^{N(\ell)} (\alpha_{i}^{\ell}-\tilde{\alpha}_{i}^{\ell})^{2} \delta_{\ell}^{-2d} \text{Vol}(B_{\delta_{\ell}}(\vec{x}_{i})) \\
        & = \frac{(1+2 \nu c_{q})^{d}\pi^{d/2}}{\Gamma(\frac{d}{2}+1)} L \sum_{\ell=1}^{L} \delta_{\ell}^{-d} \sum_{i=1}^{N(\ell)} (\alpha_{i}^{\ell}-\tilde{\alpha}_{i}^{\ell})^{2}.
    \end{align*}
    At this point, we would like to exploit the result of Lemma \ref{lem:alphabound} level-wise. However, Corollary \ref{cor:deltaalphabound} provides
    \begin{align*}
        \|f_{L}-\tilde{f}_{L}\|_{L^{2}(\Omega)} &\le \frac{(1+2 \nu c_{q})^{d/2}\pi^{d/4}}{\Gamma^{1/2}(\frac{d}{2}+1)} \sqrt{L} \left(\sum_{i=1}^{N} (\alpha_{i}^{\ell}(\delta)-\tilde{\alpha}_{i}^{\ell}(\delta))^{2}\right)^{1/2} \\ 
        & \le \frac{\sqrt{N(1)}(1+2 \nu c_{q})^{d/2} \pi^{d/4} q_{1}^{d/2} (1+4M_{d}^{2}\nu^{2}c_{q}^{2})^{\tau}}{\nu^{d/2}C_{d}c_{\Phi}\Gamma^{1/2}(\frac{d}{2}+1)} \sqrt{L} e^{-\frac{\theta}{4}T}\|f\|_{L_{\infty}(\Omega)},
    \end{align*}
    which yields the desired result.
\end{proof}

\begin{remark}
    In practice, we choose $T = \frac{-4dL^{2}}{\theta} \ln(c_{h}\mu)$. Indeed, thid is the smallest value for which the convergence holds.
    Moreover, it is worth to notice that, with this choice, we have
    \begin{equation*}
        \sqrt{L} e^{-\frac{\theta}{4}T} = \sqrt{L} (c_{h}\mu)^{dL^{2}},
    \end{equation*}
    which is clearly dominated by the exponential.
    Additionally, the assumption of Theorem \ref{thm:decayerror} lead to
    \begin{equation*}
        T^{d} \le e^{\frac{\theta}{2} T} \iff \frac{-4dL^{2}}{\theta} \ln(c_{h}\mu) \le (c_{h} \mu)^{-2L^{2}}.
    \end{equation*}
    which holds for $L^{2} \ge -\frac{\ln(\frac{2d}{\theta})}{\ln(c_{h} \mu)}$. Indeed,
    \begin{equation*}
        \frac{2d}{\theta} \ln((c_{h}\mu)^{-2L^{2}}) \le \frac{2d}{\theta} (c_{h} \mu)^{-L^{2}} \le (c_{h} \mu)^{-2L^{2}}.
    \end{equation*}
    With the setup of the numerical experiments in Section \ref{sec:numerics} we have that this is equivalent to ask for $L \ge 3$.
    Moreover, we have that 
    \begin{equation*}
        (c_{h} \mu)^{-dL} \ge (CC_{\Sigma})^{2} 2 L e^{-\theta} c_{q}^{2d/L-d}.
    \end{equation*}
\end{remark}

Lastly we are in the position to state the comprehensive result on the convergence of the multiscale approximation scheme with the additional matrix truncation in the Jacobi method. 

\begin{corollary}\label{cor:approximant_difference_with_perturbation}
    Under the hypothesis of Theorem \ref{thm:approximant_difference_with_perturbation} and Theorem \ref{thm:errorbound} we have that 
    \begin{equation}\label{eq:truncated_errorbound}
        \|f-\tilde{f}_{L}\|_{L^{2}(\Omega)} \le (C_{2}(C_{1}\mu^{\tau})^{L} + c_{f} C_{4} \sqrt{L} e^{-\frac{\theta}{4}T}) \|f\|_{H^{\tau}(\Omega)}.
    \end{equation}
    Additionally, the result can be rewritten as 
    \begin{equation}\label{eq:truncated_errorbound_N}
        \|f-\tilde{f}_{L}\|_{L^{2}(\Omega)} \le \left(C_{2}C_{1}^{L} c_{q}^{\tau}\left(\frac{N(1)}{N}\right)^{\frac{\tau}{d}}+C_{f} \sqrt{L} c_{h}^{dL^{2}} c_{q}^{dL} \left(\frac{N(1)}{N}\right)^{L}\right) \|f\|_{H^{\tau}(\Omega)}.
    \end{equation}
\end{corollary}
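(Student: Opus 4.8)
The plan is to derive both bounds from the triangle inequality
$$\|f-\tilde{f}_{L}\|_{L^{2}(\Omega)} \le \|f-f_{L}\|_{L^{2}(\Omega)} + \|f_{L}-\tilde{f}_{L}\|_{L^{2}(\Omega)},$$
where $f_{L}$ is the exact multiscale approximant from \eqref{eq:fapproximation} (built from the solution of \eqref{eq:split}) and $\tilde{f}_{L}$ is the perturbed approximant (built from the solution of \eqref{eq:perturbed_split}). The first summand is the approximation error of the exact scheme, controlled by Theorem \ref{thm:errorbound}, while the second is the truncation-induced perturbation, controlled by Theorem \ref{thm:approximant_difference_with_perturbation}.

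First I would bound $\|f-f_{L}\|_{L^{2}(\Omega)} \le C_{2}(C_{1}\mu^{\tau})^{L}\|f\|_{H^{\tau}(\Omega)}$ directly from Theorem \ref{thm:errorbound}, and $\|f_{L}-\tilde{f}_{L}\|_{L^{2}(\Omega)} \le c_{f}\sqrt{L}\,e^{-\theta T/4}\|f\|_{L_{\infty}(\Omega)}$ directly from Theorem \ref{thm:approximant_difference_with_perturbation}. Since the two summands are measured against different norms of $f$, I would reconcile them through the Sobolev embedding $H^{\tau}(\Omega)\hookrightarrow L_{\infty}(\Omega)$, which is available because $\tau>d/2$ by Assumption \ref{ass:kernel} and $\Omega$ is a bounded Lipschitz domain by Assumption \ref{ass:pointset}; this furnishes a constant $C_{4}=C_{4}(\Omega,\tau,d)$ with $\|f\|_{L_{\infty}(\Omega)} \le C_{4}\|f\|_{H^{\tau}(\Omega)}$. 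Substituting this into the perturbation term immediately yields the first claimed bound \eqref{eq:truncated_errorbound}.

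For the $N$-dependent form \eqref{eq:truncated_errorbound_N}, I would replace Theorem \ref{thm:errorbound} by Corollary \ref{cor:errorbound} to rewrite the first summand as $C_{2}C_{1}^{L}c_{q}^{\tau}(N(1)/N)^{\tau/d}\|f\|_{H^{\tau}(\Omega)}$, and then insert the specific threshold $T = -\tfrac{4dL^{2}}{\theta}\ln(c_{h}\mu)$ from the Remark into the perturbation term. With this choice $e^{-\theta T/4} = (c_{h}\mu)^{dL^{2}} = c_{h}^{dL^{2}}\mu^{dL^{2}}$, and raising the inequality $\mu^{dL}\le c_{q}^{d}\,N(1)/N$ (established inside the proof of Corollary \ref{cor:errorbound} under $\mu^{-d}>2$) to the $L$-th power converts $\mu^{dL^{2}}$ into $c_{q}^{dL}(N(1)/N)^{L}$. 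Collecting $C_{f} := c_{f}C_{4}$ then gives exactly \eqref{eq:truncated_errorbound_N}.

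The argument is almost entirely a matter of assembling earlier results, so I do not anticipate a genuine obstacle. The only ingredient that is not a literal citation is the Sobolev embedding converting the $L_{\infty}$-norm into the $H^{\tau}$-norm, and the only place demanding care is the substitution of $T$ combined with taking the $L$-th power of the $\mu^{dL}$ estimate; both steps are routine bookkeeping rather than sources of difficulty.
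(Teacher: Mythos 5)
Your proposal is correct and follows essentially the same route as the paper's own proof: the triangle inequality splitting $\|f-\tilde{f}_{L}\|_{L^{2}(\Omega)}$ into the exact-scheme error (Theorem \ref{thm:errorbound}, resp.\ Corollary \ref{cor:errorbound}) and the truncation perturbation (Theorem \ref{thm:approximant_difference_with_perturbation}), followed by inserting $T = -\tfrac{4dL^{2}}{\theta}\ln(c_{h}\mu)$ and bounding $\mu^{dL^{2}} \le c_{q}^{dL}(N(1)/N)^{L}$ via the estimate from Corollary \ref{cor:errorbound}. If anything, your write-up is slightly more careful than the paper's, since you make explicit that the constant $C_{4}$ arises from the Sobolev embedding $H^{\tau}(\Omega)\hookrightarrow L_{\infty}(\Omega)$ (valid as $\tau>d/2$ on a bounded Lipschitz domain), which the paper uses without comment.
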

\begin{proof}
    Theorem \ref{thm:errorbound} and Theorem \ref{thm:approximant_difference_with_perturbation}  imply that
    \begin{align*}
        \|f-\tilde{f}_{L}\|_{L^{2}(\Omega)} &\le \|f-f_{L}\|_{L^{2}(\Omega)} + \|f_{L}-\tilde{f}_{L}\|_{L^{2}(\Omega)} \\
        &\le C_{2}(C_{1}\mu^{\tau})^{L} \|f\|_{H^{\tau}(\Omega)}+c_{f} \sqrt{L} e^{-\frac{\theta}{4}T}\|f\|_{L_{\infty}(\Omega)} \\
        &\le C_{2}(C_{1}\mu^{\tau})^{L} \|f\|_{H^{\tau}(\Omega)}+c_{f} C_{4} \sqrt{L} e^{-\frac{\theta}{4}T}\|f\|_{H^{\tau}(\Omega)},
    \end{align*}
    which proves the stated result. The estimate
    \eqref{eq:truncated_errorbound_N} follows from Corollary \ref{cor:errorbound} and Assumption \ref{ass:pointset} with $T = \frac{-4dL^{2}}{\theta} \ln(c_{h}\mu)$, which implies
    \begin{equation*}
             e^{-\frac{\theta}{4} T} = (c_{h} \mu)^{dL^{2}} \le c_{h}^{dL^{2}} c_{q}^{dL} \left(\frac{N(1)}{N}\right)^{L}.
    \end{equation*}
\end{proof}

Corollary \ref{cor:approximant_difference_with_perturbation} extends the result of Theorem \ref{thm:errorbound} to highlight that, in the long run, the approximation quality does not suffer from the truncation procedure on the Jacobi solution. Next, we will quantify the computational gain of such procedure.

\subsection{Computational cost}\label{subsec:computationalcost}

Having convergence of the perturbed system, we are in the position to look closer to its cost.

\begin{lemma}\label{lem:nnzMt}
    The number of non-zero entries in $\tilde{M}(T)$ given in \eqref{eq:perturbedmatrix} is bounded by
    \begin{equation*}\label{eq:nnzMt}
        \frac{N(0)c_{q}^{d}}{c_{\#}}(1+T)^{d} N 
    \end{equation*}
\end{lemma}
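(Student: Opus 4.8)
The plan is to count the non-zero entries block by block and then sum over the block-lower-triangular structure of $\tilde{M}(T)$ in \eqref{eq:perturbedmatrix}. By definition the $(i,k)$ entry of the block $\tilde{\mathfrak{X}}_{\ell j}(T)$ (with $\ell>j$, so that the coarser separation distance satisfies $q_j=\max\{q_\ell,q_j\}$) is non-zero only when $\|\vec{x}^{(\ell)}_i-\vec{x}^{(j)}_k\|_2<Tq_j$. Hence I would first fix a fine node $\vec{x}^{(\ell)}_i$ and count how many coarse nodes $\vec{x}^{(j)}_k\in X_{N(j)}$ lie in the ball $B_{Tq_j}(\vec{x}^{(\ell)}_i)$.

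The key geometric ingredient is a packing estimate: since the nodes of $X_{N(j)}$ are separated by at least $2q_j$, the balls of radius $q_j$ centred at them are disjoint, and those whose centres lie in $B_{Tq_j}(\vec{x}^{(\ell)}_i)$ are contained in $B_{(T+1)q_j}(\vec{x}^{(\ell)}_i)$. A volume comparison then gives $\#\{k:\|\vec{x}^{(\ell)}_i-\vec{x}^{(j)}_k\|_2<Tq_j\}\le(1+T)^d$, exactly the type of bound used for the annuli in the proof of Lemma~\ref{lem:Mbound}. Consequently each of the $N(\ell)$ rows of the block $\tilde{\mathfrak{X}}_{\ell j}(T)$ carries at most $(1+T)^d$ non-zeros (recall the entries are the Lagrange values $\chi^{(j)}_k(\vec{x}^{(\ell)}_i)$ from \eqref{eq:mathfrakXkell}), so the block contributes at most $N(\ell)(1+T)^d$, and summing over all admissible pairs $(\ell,j)$ yields
\[
\operatorname{nnz}(\tilde{M}(T))\le (1+T)^d\sum_{\ell=2}^{L}\sum_{j=1}^{\ell-1}N(\ell)=(1+T)^d\sum_{\ell=2}^{L}(\ell-1)N(\ell).
\]

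It remains to bound the level sum $\sum_{\ell=2}^{L}(\ell-1)N(\ell)$ against $N=\sum_{\ell=1}^{L}N(\ell)$. Here I would use the two-sided cardinality estimates \eqref{eq:Nmu} and \eqref{eq:Nmu2}, which for $j\le\ell$ give $N(j)\le\frac{N(0)c_q^d}{c_\#}\,\mu^{d(\ell-j)}N(\ell)\le\frac{N(0)c_q^d}{c_\#}N(\ell)$, together with the geometric growth $\mu^{-d}>2$ from \eqref{ass:eq:chmu}, under which the terms of the sum are dominated by their finest-level contributions. Extracting from this the stated constant $N(0)c_q^d/c_\#$ in front of $N$ is, I expect, the \textbf{main obstacle}: the elementary row count carries the level weight $(\ell-1)$, and turning this weighted geometric sum into the claimed clean multiple of $N$ is the delicate point, since one must control the linear-in-$\ell$ factor against the geometric spacing of the levels and the matching upper and lower bounds on $N(\ell)$. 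By contrast, the packing estimate and the per-block bookkeeping are routine.
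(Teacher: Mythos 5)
Your packing bound and block-by-block count are precisely the paper's own first steps: the paper bounds the retained entries per row of $\tilde{\mathfrak{X}}_{\ell j}(T)$ by $\bigl(\frac{q_{j}+Tq_{j}}{q_{j}}\bigr)^{d}=(1+T)^{d}$ and then sums $N(\ell)(1+T)^{d}$ over all blocks $\ell>j$, which is exactly your quantity $(1+T)^{d}\sum_{\ell=2}^{L}(\ell-1)N(\ell)$ with the order of summation exchanged. Up to the point where you stop, the two arguments coincide.

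The step you flag as the main obstacle is a genuine gap, and your hesitation is justified: that step cannot be carried out, because the claimed bound is not implied by (and in fact contradicts) the row count. By \eqref{eq:Nmu}, \eqref{eq:Nmu2} and \eqref{ass:eq:chmu},
\[
\sum_{\ell=2}^{L}(\ell-1)N(\ell)\ge (L-1)N(L)\ge (L-1)c_{\#}\mu^{-dL},
\qquad
N\le N(0)c_{q}^{d}\sum_{\ell=1}^{L}\mu^{-d\ell}\le 2N(0)c_{q}^{d}\mu^{-dL},
\]
so the weighted sum is at least $\frac{c_{\#}(L-1)}{2N(0)c_{q}^{d}}\,N$: the weight $\ell-1$ produces a factor of order $L$, not a constant. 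This is not an artifact of counting rows too generously: as soon as $T\ge c_{q}$, every fine point $\vec{x}^{(\ell)}_{i}$ has, for each $j<\ell$, at least one point of $X_{N(j)}$ within distance $h_{j}\le c_{q}q_{j}\le Tq_{j}$, so the thresholding rule really retains at least $\sum_{\ell=2}^{L}(\ell-1)N(\ell)$ positions, and the stated bound fails for fixed $T$ once $L$ is large. The paper's proof passes this point only through an algebraic slip: since $\sum_{\ell=j+1}^{L}\mu^{-d\ell}=\frac{\mu^{-dL}-\mu^{-dj}}{1-\mu^{d}}$, summing over $j=1,\dots,L-1$ gives $\frac{1}{1-\mu^{d}}\bigl[(L-1)\mu^{-dL}-\sum_{j=1}^{L-1}\mu^{-dj}\bigr]$, but the paper writes this with $\mu^{-dL}$ in place of $(L-1)\mu^{-dL}$, i.e.\ it drops the multiplicity $L-1$. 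What your computation (and the paper's, corrected) actually proves is
\[
\operatorname{nnz}\bigl(\tilde{M}(T)\bigr)\le (1+T)^{d}\sum_{\ell=2}^{L}(\ell-1)N(\ell)\le (L-1)(1+T)^{d}N\le \frac{N(0)c_{q}^{d}}{c_{\#}}\,(L-1)(1+T)^{d}N,
\]
where the last inequality uses $c_{\#}\le N(0)c_{q}^{d}$, itself a consequence of \eqref{eq:Nmu} and \eqref{eq:Nmu2}. So the lemma should carry an extra factor $L-1$; this propagates mildly downstream, turning the Jacobi cost in Theorem \ref{thm:perturbed_cost} into $\mathcal{O}\bigl(L^{d+1}N\log^{d}N\bigr)$ rather than $\mathcal{O}\bigl(L^{d}N\log^{d}N\bigr)$.
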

\begin{proof}
    The number of non-zero entries per row in  $\tilde{\mathfrak{X}}_{\ell j}(T)$ is bounded by
    \begin{align*}
    	\# \{ \vec{x}^{(j)}_{k} \in B_{Tq_j} (\vec{x}^{(\ell)}_{i}) \} \le \left(\frac{q_{j}+Tq_{j}}{q_{j}}\right)^{d}\le (1+T)^{d}.
    \end{align*}
    
    Therefore, exploiting \eqref{eq:Nmu} and \eqref{eq:Nmu2}, the number of non-zero entries for the $\tilde{M}$ matrix is bounded by
    
    \begin{align*}
        \sum_{j=1}^{L-1} \sum_{\ell = j+1}^{L} N(\ell) (1+T)^{d} & \le N(0)c_{q}^{d}(1+T)^{d} \sum_{j=1}^{L-1} \sum_{\ell = j+1}^{L} \mu^{-d\ell} \\
        & = \frac{N(0)c_{q}^{d}(1+T)^{d}}{1-\mu^{d}} \left[\mu^{-dL} - \sum_{j=1}^{L-1} \mu^{-dj} \right].
    \end{align*}
    Moreover, we have
    \begin{align*}
        \mu^{-dL} - \sum_{j=1}^{L-1} \mu^{-dj} & = \sum_{j=1}^{L} \mu^{-dj} - 2\sum_{j=1}^{L-1} \mu^{-dj} = \sum_{j=1}^{L} \mu^{-dj} - 2\left(\frac{\sum_{j=1}^{L} \mu^{-dj}}{\mu^{-d}} -1 \right)  \\
        & = \left( 1-2\mu^{d} \right)\sum_{j=1}^{L} \mu^{-dj} +2  \le \left( 1-2\mu^{d} \right)\sum_{j=1}^{L} \frac{N(\ell)}{c_{\#}} +2.
    \end{align*}
    Additionally,  
    \begin{equation*}
        \frac{1-2\mu^{d}+2c_{\#}/N}{1-\mu} < 1 \quad \Leftrightarrow \quad \frac{2}{\mu^{d}} < N,
    \end{equation*} 
    holds if $c_{\#} > \frac{2N(1)}{N}$ (which is not a real limitation). Therefore, we conclude that the number of non-zero entries of $\tilde{M}$ is bounded by
    \begin{equation}
        \le \frac{N(0)c_{q}^{d}}{c_{\#}}(1+T)^{d} N.
    \end{equation}
\end{proof}

In order to present a clear bound for the computational cost of the perturbed system we introduce additional assumptions.
\begin{assumption}\label{ass:perturbedcost}
    Let $X_{N(\ell)} \subset \Omega$ for $\ell \in \N$ be a family of scattered node sets which satisfy Assumption \ref{ass:pointset}. We further assume that $c_{h} > \mu$.
\end{assumption}

\begin{theorem}\label{thm:perturbed_cost}
    Let $\varepsilon \in (0,1)$ be given. The solution of the perturbed system \eqref{eq:perturbed_split} is obtained at the cost
	\begin{equation*}
		\operatorname{costs} (\text{solve}_{\eqref{eq:perturbed_split}}) \in \mathcal{O} \left( L^{d}N\log^{d}(N) +N\lceil \frac{1}{2} \sqrt{C_{cg}} \log{\left(\frac{ (C_{d}c_{\Phi})^{-1} \left(1 +4 M_{d}^{2} \nu^{2} c_{q}^{2}\right)^{\tau}) \sqrt{L}}{\varepsilon} \right)} \rceil \right).
	\end{equation*}
\end{theorem}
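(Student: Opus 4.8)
The plan is to split the solve of the coupled system \eqref{eq:perturbed_split} into its two stages and bound each separately. The block diagonal stage $D_L\tilde{\vec\alpha}=\tilde{\vec\beta}$ is exactly the system treated in Theorem \ref{thm:cg}: the perturbation only alters the right-hand side $\tilde{\vec\beta}$ and not the blocks $A_\ell$, so the blockwise conjugate gradient estimate proved there reproduces verbatim the second summand of the asserted bound. The whole task therefore reduces to costing the block lower triangular stage $(\id_N-\tilde M(T))\tilde{\vec\beta}=\mathbf f$ and showing that it contributes the first summand $L^d N\log^d(N)$.

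For the triangular stage I would first note that $\tilde M(T)$ from \eqref{eq:perturbedmatrix} is again strictly lower block triangular, hence nilpotent with $\tilde M(T)^L=0$, so the proof of Theorem \ref{thm:jacobi} applies verbatim and the Jacobi iteration \eqref{eq:jacobi} (with $\tilde M(T)$ in place of $M$) terminates exactly after $L$ sweeps. The crucial efficiency observation is that, by triangularity, the $k$-th sweep only stabilizes the $k$-th block component using the already-fixed blocks $1,\dots,k-1$, so the aggregate arithmetic over all $L$ sweeps collapses to a single forward substitution whose cost is the number of non-zero entries of $\tilde M(T)$. By Lemma \ref{lem:nnzMt} this is $\mathcal O\big((1+T)^d N\big)$, with the constant kept uniform in $L$ precisely by the hypothesis $c_h>\mu$ of Assumption \ref{ass:perturbedcost}. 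It remains to convert $(1+T)^d N$ into $L^d N\log^d(N)$: inserting the admissible threshold $T\sim L^2$ (the lower bound $T\ge-\tfrac{4dL^2}{\theta}\ln(c_h\mu)$ of Lemma \ref{lem:alphabound}) gives $(1+T)^d\sim L^{2d}$, while the cardinality relation $N\sim\mu^{-dL}$ from Assumption \ref{ass:pointset} (cf. the proof of Corollary \ref{cor:errorbound}) gives $L\sim\log N$ up to the constant $d\log(1/\mu)$. Writing $L^{2d}=L^d\cdot L^d$ and trading one factor $L^d$ for $\log^d(N)$ then yields $\mathcal O\big(L^d N\log^d(N)\big)$ for the triangular solve; adding the CG cost of Theorem \ref{thm:cg} gives the claim.

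The main obstacle is the cost accounting in the triangular stage. A literal reading of Theorem \ref{thm:jacobi} would charge $L$ full sparse matrix–vector products, i.e. $L\cdot\mathrm{nnz}(\tilde M(T))\sim L^{2d+1}N$, carrying one spurious factor $L$; one must exploit nilpotency to see that the combined work of the $L$ sweeps equals a single forward substitution of cost $\mathrm{nnz}(\tilde M(T))$. The secondary subtlety is that $T$ scales like $L^2$ rather than $L$, so both copies of $L^d$ in $L^d\log^d(N)$ in fact originate from the single factor $(1+T)^d\sim L^{2d}$ through the identification $L\sim\log N$; if one prefers to realize the sparse multiplications through a spatial range structure, the per-query $\log^d(N)$ of such a structure supplies the logarithmic factors directly without changing the dominant order. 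Keeping the constants of Lemma \ref{lem:nnzMt} and the geometric level sums uniform in $L$ is exactly where $c_h>\mu$ enters.
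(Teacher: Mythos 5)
Your proposal is correct and follows essentially the same route as the paper: split \eqref{eq:perturbed_split} into the block diagonal stage, whose cost is taken verbatim from Theorem \ref{thm:cg} since the perturbation changes only the right-hand side and not the blocks $A_\ell$, and the triangular stage, whose cost is reduced to the number of non-zeros of $\tilde{M}(T)$ from Lemma \ref{lem:nnzMt}, evaluated at the practical threshold $T=-\frac{4dL^{2}}{\theta}\ln(c_h\mu)$ and converted into powers of $L\log(N)$ using $c_h>\mu$ together with $N\ge N(L)\ge c_{\#}\mu^{-dL}$ (which gives $L\lesssim \log N$). This is exactly the paper's proof.

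The one place where you go beyond the paper is the iteration-count accounting, and it is a worthwhile refinement. The paper's proof simply declares that the Jacobi cost is dominated by the matrix-vector multiplication and then identifies $\operatorname{cost}(\text{Jacobi})$ with $\operatorname{nnz}(\tilde{M}(T))$, even though its own Theorem \ref{thm:costsforeps} charges the unperturbed Jacobi with $L$ multiplications, i.e. $\mathcal{O}(LN^{2})$; transferring that accounting literally would indeed produce the spurious factor $L$ you flag. Your observation that the strictly lower block triangular (nilpotent) structure lets the $L$ sweeps collapse into a single forward substitution, so that each non-zero block entry is touched exactly once, is precisely what legitimizes the stated bound $\mathcal{O}\bigl(L^{d}N\log^{d}(N)\bigr)$ rather than $\mathcal{O}\bigl(L^{d+1}N\log^{d}(N)\bigr)$. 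One minor imprecision: you attribute Assumption \ref{ass:perturbedcost} ($c_h>\mu$) to keeping the constants of Lemma \ref{lem:nnzMt} and the level sums uniform in $L$; in the paper it is used only once, to bound $\log\bigl((c_h\mu)^{-dL}\bigr)\le 2\log\bigl(\mu^{-dL}\bigr)\le 2\bigl(\log N-\log c_{\#}\bigr)$, i.e. exactly in the conversion of $(1+T)^{d}$ into $L^{d}\log^{d}(N)$ that you perform by writing $L^{2d}=L^{d}\cdot L^{d}$. This does not affect the validity of your argument.
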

\begin{proof}
    As pointed out before, the computational cost of the Jacobi iteration is dominated by the cost of the matrix vector multiplication. However, the matrix $\tilde{M}$ performs strictly better than $M$. Indeed, taking into account Lemma \ref{lem:nnzMt} with $T = \frac{-4dL^{2}}{\theta} \ln(c_{h}\mu)$ we can already note the expected behaviour:
    \begin{equation*}
        \frac{N(0)c_{q}^{d}}{c_{\#}}(1+T)^{d}N = \frac{N(0)c_{q}^{d}}{c_{\#}}\left(1+(4L/\theta)\log\left((c_{h} \mu)^{-dL}\right)\right)^{d} N \sim cL^{d}\log^{d}(N)N.
    \end{equation*}
    Precisely, let Assumption \ref{ass:perturbedcost} be satisfied, moreover, we have that
    \begin{equation*}
        N > N(L) \ge c_{\#}\mu^{-dL}.
    \end{equation*}
    Hence, we obtain
    \begin{align*}
        \frac{N(0)c_{q}^{d}}{c_{\#}}(1+T)^{d}N &= \frac{N(0)c_{q}^{d}}{c_{\#}}\left(1+(4L/\theta)\log\left(c_{h}^{-dL} \mu^{-dL}\right)\right)^{d} N \\
        &\le \frac{N(0)c_{q}^{d}}{c_{\#}}\left(1+(8L/\theta)\log\left(\mu^{-dL}\right)\right)^{d} N \\
        &\le \frac{N(0)c_{q}^{d}}{c_{\#}}\left(1+(8L/\theta)\left[\log(N) -\log(c_{\#})\right]\right)^{d} N \\
        &\le \frac{N(0)c_{q}^{d}}{c_{\#}}((9L/\theta)^{d}\log^{d}(N) N.
    \end{align*}
    Thus
    \begin{equation}\label{eq:jacobicost}
        \text{cost}(\text{Jacobi}) = \mathcal{O}(L^{d}\log^{d}(N)N),
    \end{equation}
    which combined with Theorem \ref{thm:costsforeps} yields the desired result.
\end{proof}

\section{Numerical Experiments}\label{sec:numerics}
In this section we look at numerical evidence of our theoretical results. In our experiments we chose as sequence of sets $X_{\ell}$, the regular grids on $\Omega = [0,1]^{2}$ with size $2^{-\ell}$. Details of the sets are given in Table \ref{tab:datasets}. 
\begin{table}[ht]
    \centering
    \begin{tabular}{c|c|c|c|c|c|c|c}
         & $X_{1}$ & $X_{2}$ & $X_{3}$ & $X_{4}$ & $\dots$ & $X_{10}$ & $X_{11}$\\
         \hline
        $N(\ell)$ & 9 & 25 & 81 & 289 & $\dots$ & 1050625 & 4198401 \\
        $h_{\ell}$ & 0.354 & 0.177 & 0.0884 & 0.0442 & $\dots$ & 0.000690 & 0.000345 \\
        $q_{\ell}$ & 0.25 & 0.125 & 0.0625 & 0.0313 & $\dots$ & 0.000488 & 0.000244
    \end{tabular}
    \caption{Details of sets $X_{\ell}$}
    \label{tab:datasets}
\end{table}
Then, we chose $\mu = 0.5$ and $\nu = 4$ according to Assumption \ref{ass:pointset}. 

Our choice for compactly supported radial basis function is the Wendland function $\Phi = \phi_{(3,1)}(\|\cdot\|_{2}) \in C^{2}(\R^{3})$ generating $H^{3}(\R^{d})$, where $\phi_{(3,1)}(r) = (1-r)_{+}^{4}(4r+1)$, and our target function is the standard Franke function

\begin{equation}\label{eq:Franke}
    F(x,y) = \frac{3}{4}e^{-\frac{(9x-2)^{2}+(9y-2)^{2}}{4}}+\frac{3}{4}e^{-\frac{(9x+1)^{2}}{49}-\frac{(9y+1)}{10}} +\frac{1}{2}e^{-\frac{(9x-7)^{2}+(9y-3)^{2}}{4}}-\frac{1}{5}e^{-(9x-4)^{2}-(9y-7)^{2}}.
\end{equation}

We first investigate the quality of our bounds on the triangular matrix involved by the Jacobi method, then we analyse the implication of its truncation both from the accuracy and complexity point of view. Lastly, we show the results of a parallel implementation of the methods highlighting their scalability \cite{lotfparallel}.

The parallel simulations were executed on an NVIDIA A100 GPU which are part of the cluster MaRC3a\footnote{For a description of the cluster, see \url{https://www.uni-marburg.de/de/hrz/dienste/hochleistungsrechnen}} with CUDA 11.1. The truncation analysis was performed with Python. Indeed, on the latter the emphasis is on trend analysis rather than performance, therefore, we opted to execute the experiments exclusively on the lower levels prioritizing the development of compact and intuitive code.

\subsection{Bounds on M}
First, we examine the accuracy of the norm $\|M_{L}\|_{2 \to 2}$. We compared the theoretical bound of Lemma \ref{lem:Mbound} with the highest singular value found with the SVD decomposition of $M_{L}$. We estimated all parameters enclosed in \eqref{eq:Mbound} numerically.
Figure \ref{fig:M_norm_bound} shows that the theoretical bound for $\|M_{L}\|_{2 \to 2}$ presented in Lemma \ref{lem:Mbound} is sharp up to a constant factor. The function $f(L) = 0.48 e^{0.78 L}$ was also plotted to emphasize the slope and remark the similarity of the bound estimate with the numerical results.

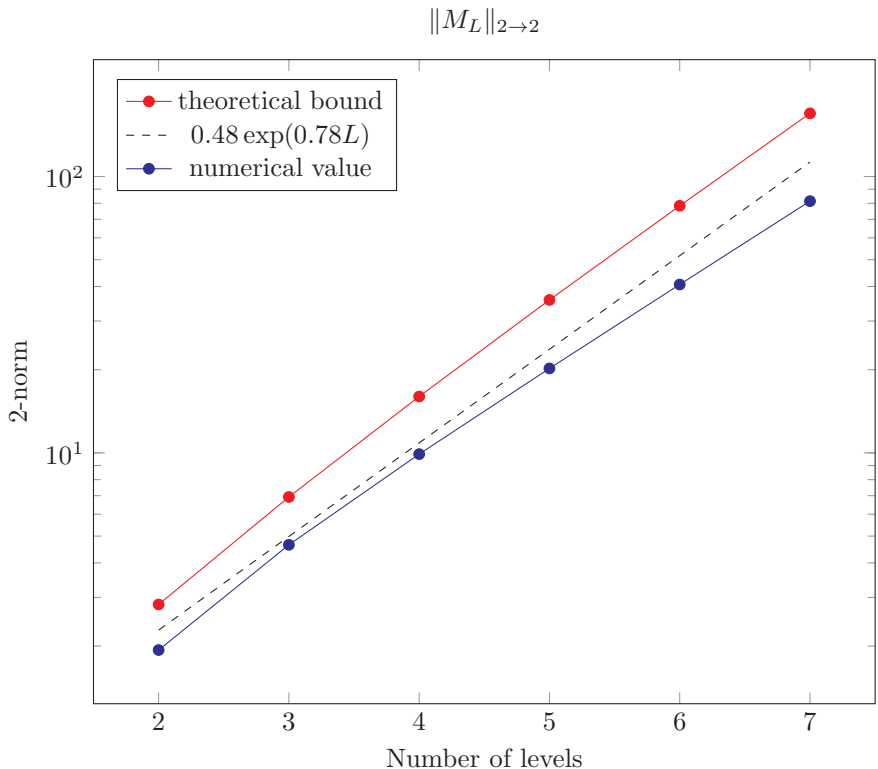
\begin{figure}[ht]
    \centering
    \begin{tikzpicture}
        \begin{semilogyaxis}[
            scale=1.5,
            legend pos=north west,
            xlabel=Number of levels, ylabel=2-norm,  title=$\|M_{L}\|_{2 \to 2}$]
        \addplot[color=red,mark=*] coordinates {
		(2, 2.828)
		(3, 6.928)
		(4, 16.0)
		(5, 35.777)
  		(6, 78.384)
		(7, 169.328)
        };
        \addlegendentry{theoretical bound}

        \addplot[color=black, dashed] coordinates {
        (2, 2.284)
		(3, 4.983)
		(4, 10.870)
		(5, 23.713)
		(6, 51.730)
		(7, 112.847)
        };
        \addlegendentry{$0.48\exp(0.78 L)$}
        
        \addplot[color=blue,mark=*] coordinates {
		(2, 1.935)
		(3, 4.649)
		(4, 9.899)
		(5, 20.212)
  		(6, 40.674)
		(7, 81.477)
        };
        \addlegendentry{numerical value}
        \end{semilogyaxis}
    \end{tikzpicture}
    \caption{Comparison of the theoretical bound of $\|M_{L}\|_{2 \to 2}$ from Lemma \ref{lem:Mbound} with its numerical value.
\label{fig:M_norm_bound}}
\end{figure}

\subsection{Truncation}
We want to further investigate the truncation influence of the system solution.
Therefore, on one side we compare the number of non-zero elements on the original matrix and on its truncated variation, since within the Jacobi method the matrix vector multiplication cost, which dominate its cost complexity, is directly tied to these values. On the other side, we compute the norm of the difference matrix $M-\tilde{M}$ to inspect how large their difference is, which is clearly related to the difference of the solutions as shown on the theoretical analysis. 

Indeed, we can see that, in Figure \ref{fig:M_truncation_norm_diff}, independently from the number of levels $L$, the normalized $2$-norm of the difference from the truncation and the original matrix drop to zero as $T$ grow. The exponential reference, $0.77 e^{-0.69 T}$ gives us an idea of the decrease rate of the normalized difference.
Moreover, we point out that, assuming $\|M_{L}\|_{2 \to 2}$ behaves as its upper bound, the ratio behaves as $T^{d}e^{-\theta T}$. 
Indeed, we can notice how different levels present the same exponential decay with respect to $T$, just with different fluctuations.

Also, Figure \ref{fig:M_truncation_nnz_ratio} suggests how the computational gain, in the form of a faster matrix vector multiplication, wanes with the growth of $T$. Indeed, eventually the ratio will reach the identity. 
As discussed before, since $M_{L}$ has a row more than $M_{L-1}$, for constant $T$, higher levels are associated with a smaller ratio. Moreover, assuming that there exists $T^{*}_{L}$ such that the non-zero elements of $\tilde{M}_{L}$ coincide with the ones from $M_{L}$ then, Assumption \ref{ass:pointset} lead to 
\begin{equation*}
    c_{q} q_{L+1} \le \mu q_{L} \le \frac{c_{q}}{c_{h}} q_{L+1}.
\end{equation*}
This means that we will roughly have $T^{*}_{L+1} \sim \frac{c_{q}}{\mu} T^{*}_{L}$. This result can be extended to the non-zeros ratio, indeed, the number of points $N(L)$ behave as $\mu^{-dL}$, from which we expect that, 
\begin{equation*}
    \frac{\text{ratio}_{L+1}}{\text{ratio}_{L}} \sim 1 \iff \left(\frac{T_{L+1}}{T_{L}}\right)^{d} \sim \frac{N(L+1)}{N(L)} \sim \mu^{-d}.
\end{equation*}
This means that to keep a given ratio increasing the number of levels, we must increase $T$ by a factor $\mu^{-1}$, which is 2 in our simulations. Since we exploited an upper bound for these considerations, we can see how $r_{L}(T)$ is always bounded by $r_{L+1}(2T)$, which is exactly what Figure \ref{fig:M_truncation_nnz_ratio} shows.

To conclude, putting all the information together, we can achieve a significant improvement of the matrix vector multiplication with a negligible accuracy loss with a careful choice of $T$. Indeed, while the normalized norm difference exponentially tends to zero, the ratio of the non-zero elements increase just linearly. 

\begin{figure}
    \centering
    \begin{tikzpicture}
        \begin{semilogyaxis}[
            %width = 0.99\textwidth,
            scale=1.5,
            legend pos=north east,
            xlabel=T, ylabel=norm ratio,  title=$\frac{\|M_{L}-\tilde{M}_{L}\|_{2 \to 2}}{\|M_{L}\|_{2 \to 2}}$
            ]

        \addplot[color=brown,mark=*] coordinates {
        (1, 0.71423)
		(2, 0.13579)
		(3, 0.12313)
		(4, 0.02561)
		(5, 0.02839)
		(6, 0.01009)
        };
        \addlegendentry{3 levels}

        \addplot[color=blue,mark=*] coordinates {
        (1, 0.62183)
		(2, 0.14416)
		(3, 0.11283)
		(4, 0.03094)
		(5, 0.02900)
		(6, 0.01264)
        };
        \addlegendentry{4 levels}

        \addplot[color=green,mark=*] coordinates {
		(1, 0.57363)
		(2, 0.14844)
		(3, 0.10547)
		(4, 0.03408)
		(5, 0.02732)
		(6, 0.01421)
        };
        \addlegendentry{5 levels}

        \addplot[color=orange,mark=*] coordinates {
        (1, 0.54880)
		(2, 0.15050)
		(3, 0.10077)
		(4, 0.03577)
		(5, 0.02591)
		(6, 0.01507)
        };
        \addlegendentry{6 levels}

        \addplot[color=red,mark=*] coordinates {
        (1, 0.53714)
		(2, 0.15138)
		(3, 0.09821)
		(4, 0.03653)
		(5, 0.02513)
		(6, 0.01546)
        };
        \addlegendentry{7 levels}

        \addplot[color=black, dashed] coordinates {
		(1, 0.38674)
		(2, 0.19398)
		(3, 0.09730)
		(4, 0.04880)
		(5, 0.02448)
		(6, 0.01228)
        };
        \addlegendentry{$0.77 \cdot e^{-0.69 T}$}
        \end{semilogyaxis}
    \end{tikzpicture}
    \caption{normalized norm difference from $M_{L}$ and $\tilde{M}_{L}$. \label{fig:M_truncation_norm_diff}}
\end{figure}
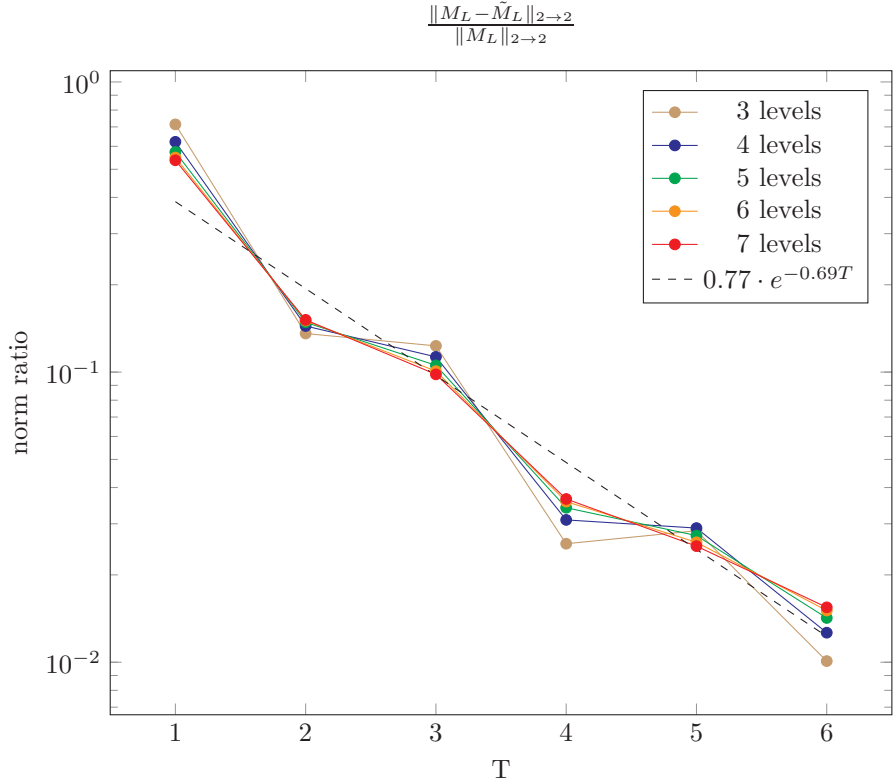

\begin{figure}
    \centering
    \begin{tikzpicture}
        \begin{axis}[
            scale=1.5,
            legend pos=north west,
            xlabel=T, ylabel=nnz ratio,  title=$\frac{nnz(\tilde{M}_{L})}{nnz(M_{L})}$
            ]

        \addplot[color=brown,mark=*] coordinates {
		(1, 0.03714)
		(2, 0.19105)
		(3, 0.33960)
		(4, 0.54541)
		(5, 0.65638)
		(6, 0.79597)
        };
        \addlegendentry{3 levels}

        \addplot[color=blue,mark=*] coordinates {
        (1, 0.01914)
		(2, 0.09400)
		(3, 0.17522)
		(4, 0.29077)
		(5, 0.36789)
		(6, 0.47539)
        };
        \addlegendentry{4 levels}

        \addplot[color=green,mark=*] coordinates {
        (1, 0.00839)
		(2, 0.03983)
		(3, 0.07685)
		(4, 0.13027)
		(5, 0.17197)
		(6, 0.22975)
        };
        \addlegendentry{5 levels}

        \addplot[color=orange,mark=*] coordinates {
        (1, 0.00440)
		(2, 0.02038)
		(3, 0.04036)
		(4, 0.06932)
		(5, 0.09454)
		(6, 0.12878)
        };
        \addlegendentry{6 levels}

        \addplot[color=red,mark=*] coordinates {
        (1, 0.00291)
		(2, 0.01321)
		(3, 0.02669)
		(4, 0.04622)
		(5, 0.06457)
		(6, 0.08905)
        };        
        \addlegendentry{7 levels}

        \end{axis}
    \end{tikzpicture}
    \caption{non-zero elements ratio from $M_{L}$ and $\tilde{M}_{L}$.\label{fig:M_truncation_nnz_ratio}}
\end{figure}
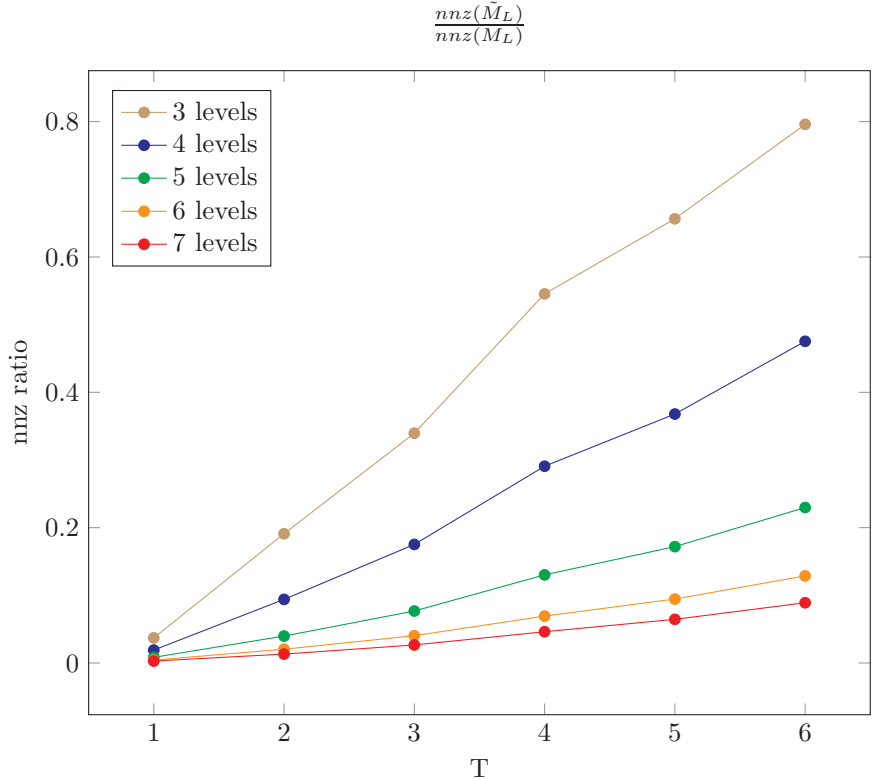

\subsection{Parallelization on a GPU}
The transition from serial to parallel programming in the last decades was inevitable \cite{KungleLance05}. Indeed, addressing real world applications in a serial environment is no longer feasible, while application of GPU accelerated methods is becoming increasingly popular, see e.g. \cite{Liu15}\cite{de2020gpu}.

In its classical formulation, the multi-scale method, requires $L$ sequential solutions via the CG methods of linear systems of exponentially growing size. Indeed, the first steps may limit the effectiveness of high computational resources available, as every step is strictly dependent on the previous ones. On the other hand, our reformulation allows the system solution with just two steps: the Jacobi \eqref{eq:split_system_triangular} and the CG solution of \eqref{eq:blockdiagonal}. Both can highly benefit from a parallel solution, since for both the higher computational cost comes from the matrix-vector multiplication which involves large matrices. We start with an efficient implementation of the latter.

In the parallel solution of \eqref{eq:blockdiagonal_levelwise}, we solve $L$ independent CG systems geometrically increasing in size. Having said that, the number of iterations needed for each cg solution is bounded independently by the level. Therefore, our aim is to distribute the available resources proportionally to the matrix size, solving multiple instances simultaneously.

\begin{algorithm}[ht]
    \caption{Parallel Conjugate Gradient\label{alg:cg}}
    \begin{algorithmic}[1]
    
    \State \textbf{Input:} Points array $X$, rhs array $\bm{f}$, cumulative number of points array $M$, Wendland coefficients, threshold $\varepsilon$, number of levels $L$.
    
    \State \textbf{Output:} Solution array of the CG.

    \For{each level $\ell$ on different streams}
        \State Set $X_{\ell}$, $d \cdot N(\ell)$-array with points of level $\ell$;
        \State Set $N(\ell)$-arrays: $\text{res}_{\ell}$, $\text{dir}_{\ell}$, $\text{temp}_{\ell}$, $\text{sol}_{\ell}$;
        \State $\text{res}_{\ell} \gets \text{rhs}_{\ell}$; $\text{dir}_{\ell} \gets \text{rhs}_{\ell}$;
        \State $\text{r\_norm}[\ell] \gets \text{cublasDdot}(\text{res}_{\ell}, \text{res}_{\ell})$;
    \EndFor

    \While{$\text{r\_norm}[\ell] < \varepsilon$ for all $\ell$}
        \For{each level $\ell$ (on stream $\ell$)}
            \State Set $\text{temp}_{\ell} \gets 0$;
            \State $\text{temp}_{\ell} \gets \text{AMV}(X_{\ell}, N(\ell), \log_2(N(\ell))+1, M, \text{dir}_{\ell}, \text{temp}_{\ell}, \nu h_{0} \mu^{\ell})$;
            \State $\alpha[\ell] \gets \text{cublasDdot}(\text{dir}_{\ell}, \text{temp}_{\ell})$;
            \State Synchronize;
            \State $\alpha[\ell] \gets \text{r\_norm}[\ell] \cdot \alpha[\ell]$;
            \State $\text{sol}_{\ell} \gets \text{cublasDaxpy}(\alpha[\ell], \text{dir}_{\ell}, \text{sol}_{\ell})$;
            \State $\text{res}_{\ell} \gets \text{cublasDaxpy}(-\alpha[\ell], \text{temp}_{\ell}, \text{res}_{\ell})$;
            \State $\text{r2\_norm}[\ell] \gets \text{cublasDdot}(\text{res}_{\ell}, \text{res}_{\ell})$;
            \State Synchronize;
            \State $\beta[\ell] \gets \text{r2\_norm}[\ell] / \text{r\_norm}[\ell]$;
            \State $\text{cublasDscalar}(\text{dir}_{\ell}, \beta[\ell])$;
            \State $\text{dir}_{\ell} \gets \text{cublasDaxpy}(1, \text{res}_{\ell}, \text{dir}_{\ell})$;
            \State $\text{r\_norm}[\ell] \gets \text{r2\_norm}[\ell]$;
        \EndFor
    \EndWhile

    \end{algorithmic}
\end{algorithm}

Algorithm \ref{alg:cg} outlines the parallel solution of the system. First, the dependencies are organized such that the available computational resources are effectively exploited. Then, a stream (all processes within a stream act independently from processes for other streams) for each level is created. Lastly, the cg routines are executed in parallel over the streams, where synchronization blocks are placed to guarantee the correct computation of coefficients and improve the concurrency between streams. 
The matrix-vector multiplication (AMV) is a critical step in the cg routine. Indeed, the performance of the routine largely depends on its effective implementation. Consequently, taking into account the structure of the matrices $A_{\ell}$, leads to significant improvements. In our settings, there are two possible strategies: the kd-tree organization of the point sets or the sparse storage of the matrices entries.
An extensive discussion on the matter is out of scope of the manuscript, the former is implemented. 
This choice allow us to have a matrix vector multiplication with computational cost $\mathcal{O}(N\log N)$ on a serial setting and $\mathcal{O}(N/p\log N)$ on a parallel environment with $p$ processors.

\begin{algorithm}[ht]
    \caption{Parallel Jacobi method \label{alg:jb}}
    \begin{algorithmic}[1]
    
    \State{Input: Points array $X$, rhs array $\bm f$, cumulative number of points array $M$, Wendland coefficients, $\varepsilon$, number of levels $L$.}
    \State{Output: Solution array of Jacobi iterative method.}
    \vspace{0.1cm}
    \State $\bm \beta_{i} \leftarrow \bm f$; 
    \For{$i = 0, \dots, L-1$}   
        \State For $0 \le  \ell  \le L-2$ (in parallel): 
        \State $\bm t^{(\ell)} \leftarrow$ CG($A_{\ell}, \bm \beta_{i}^{(\ell)}$); 
        \State $\bm \beta_{i} \leftarrow \bm f +$ McV($B_{\ell}, \bm t^{(\ell)}$); 
    \EndFor
    \end{algorithmic}
\end{algorithm}

As shown in Algorithm \ref{alg:jb} also the parallel implementation of the Jacobi method, revolves around its matrix-vector multiplication (McV). However, differently from the $A_{\ell}$ matrices, $M$ is not a sparse matrix except the zero blocks. Assuming a lack of space to store the matrix values, we cannot compute the matrices in advance (which might be a valid, but expensive, opportunity). Therefore, to compute the matrix vector multiplication we need first to solve the systems

\begin{equation*}
    D_{L-1} \vec{\beta}^{(t)} = \vec{\beta}^{(m)}, \quad \text{i.e. }\quad A_{j} \vec{\beta}^{(t)}_{j} = \vec{\beta}^{(m)}_{j} \quad 1 \le j \le L-1
\end{equation*}

which can be done in parallel over $j$. Then, the Jacobi solution is updated

\begin{equation*}
    \vec{\beta}^{(m+1)} = B_{L} \vec{\beta}^{(t)} + \mathbf{f},
\end{equation*}

where $B_{L} = T_{L}-D_{L}$.

Indeed, the structure of the matrices $B_{jk}$ and $A_{j}$ allow an effective and efficient computation. Moreover, another solution is to consider the perturbed matrix \eqref{eq:perturbedmatrix}, and trade some accuracy on the solution for speed up of the computation.

To close the algorithm overview Table \ref{tab:complexity} present the complexity for Algorithm \ref{alg:cg} and \ref{alg:jb}.

\begin{table}[ht]
    \centering
    \begin{tabular}{c|c|c}
         & Space & Time \\
         \hline
        Conj. Grad. & $\mathcal{O}((d+4)N)$ & $\mathcal{O}(kN\log_{2}N)$ \\
        Jacobi & $\mathcal{O}((d+1)N+L)$ & $\mathcal{O}(N\log_{2}N)$ \\
    \end{tabular}
    \caption{Complexity of algorithms}
    \label{tab:complexity}
\end{table}

Is worth to mention that the provided code works as long as the data can be indexed with positive \verb|int|s, i.e. until $dN \le 2^{32}-1$ (which are roughly 16Gb). The extension to different types is beyond the scope of this article. However, the treatment of larger data sets will be addressed in a subsequent publication. 

Lastly, we present the numerical result of the interpolation time and efficiency over different set sizes. We solve system \eqref{eq:split}, with the sequence of dataset as shown in Table \ref{tab:datasets} and we choose as target function the Franke function \eqref{eq:Franke}. Both, Figure \ref{fig:system_solution} and Figure \ref{fig:system_efficiency}, have "Number of warps" on the x-axis, since the most meaningful measurements are taken where the processors are fully split on warps. However, the plot can be stretched by a factor 32 (which is the size of a single warp in our setting) to have the actual number of processors employed.
The time is taken with respect to the CPU clock, therefore we expect sub-optimal decrease of the time with respect to the number of processors, since it keeps track also of time devoted to data management. Moreover, the code was handwritten to fully exploit the strengths of our setup, leaving the optimization aside.

\begin{figure}[ht]
%\label{fig:system_solution}
    \centering
    %\centering
    \begin{tikzpicture}
    	\begin{loglogaxis}[
            scale=1.5,
            legend pos=north east ,
            xlabel=Number of warps, ylabel=Time (s),  title=Resolution time of the system solution]
    
    	\addplot[color=blue,mark=*] coordinates {
            (3456, 0.974)
            (1728, 0.974)
            (864, 0.974)
            (432, 1.197)
            (216, 1.519)
            (108, 2.675)
            (54, 5.017)
            (27, 8.802)
        };
        \addlegendentry{$7$ levels (22k)}
    
        \addplot[color=orange,mark=*] coordinates {
            (3456, 1.485)
            (1728, 1.642)
            (864, 2.037)
            (432, 3.534)
            (216, 6.463)
            (108, 11.445)
            (54, 22.156)
            (27, 39.851)
    	};
        \addlegendentry{$8$ levels (88k)}
    
        \addplot[color=brown,mark=*] coordinates {
            (3456, 4.479)
            (1728, 5.101)
            (864, 8.339)
            (432, 14.601)
            (216, 28.157)
            (108, 52.544)
            (54, 100.949)
            (27, 188.368)
    	};
        \addlegendentry{$9$ levels (350k)}
    
        \addplot[color=violet,mark=*] coordinates {
            (3456, 20.111)
            (1728, 20.981)
            (864, 34.662)
            (432, 65.980)
            (216, 128.995)
            (108, 245.286)
            (54, 479.171)
            (27, 906.343)
    	};
        \addlegendentry{$10$ levels (1.4M)}
    
        \addplot[color=red,mark=*] coordinates {
            (3456, 92.642)
            (1728, 93.722)
            (864, 156.326)
            (432, 298.409)
            (216, 588.660)
            (108, 1152.734)
            (54, 2286.841)
            (27, 4351.735)
    	};
        \addlegendentry{$11$ levels (5.6M)}

        \addplot[color=black, dashed] coordinates {
			(1728,  35.935)
			(864,  64.870)
			(432, 122.741)
			(218, 236.358)
			(108, 469.963)
			(54, 932.926)
			(27, 1858.852)
        };
        \addlegendentry{$\frac{5\cdot 10^{4}}{\# \text{warp}}+7$}
    
        \end{loglogaxis}
    \end{tikzpicture}
    \caption{Time required for solve the system \eqref{eq:split} with different data sizes
    \label{fig:system_solution}}
\end{figure}
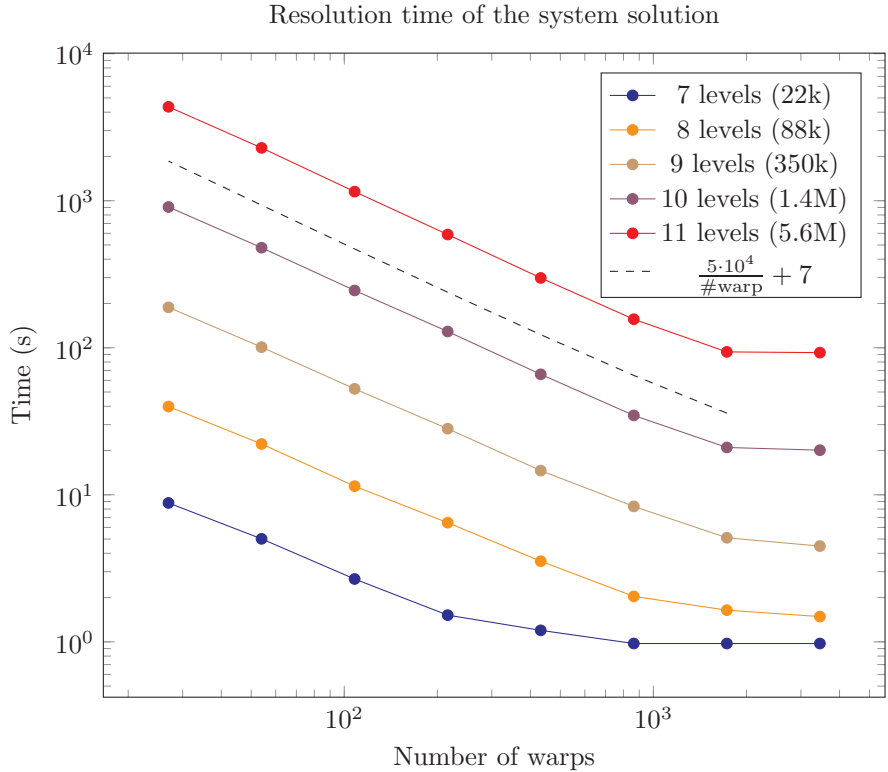

As can be seen in Figure \ref{fig:system_solution}, the computational time decrease linearly as the number of processors increase. As the management time is almost fix independently of the problem size, small problems cannot benefit from a large amount of computational power, indeed the blue line bends, with respect to its initial trajectory, as the number of warps involved increase. On the other hand, larger problems benefit from a large number of processors involved. 
The dashed function stress that all the lines on the graph are almost parallel, emphasizing their slope. Moreover, highlight how doubling the number of processors halves the time needed to solve the system. Additionally, is worth to remark how, independently from the level (or the data size), there is almost no improvement for the maximum number of warps. This is motivated by the fact that the parallel structure is exploited mainly during the matrix-vector multiplication (where the benefits of a larger number of processors can be seen). However, its impact on the system solution is already negligible, i.e. the time we measure is mostly spent on resource coordination and scheduling.

Additionally, we tested the largest problem for this settings, consisting of 14 levels and $350M$ points in $\R^2$. The time needed to solve the system with the maximum number of warps (3456) was of almost $2.5$ hours. 

In a parallel setting, the best we can hope for is to split the workload equally between the processors. If we call the serial runtime $T_{s}$ and the parallel runtime $T_{p}$, we aim to have a relation like $T_{p} = T_{s}/p$ where $p$ represent the number of processors involved. Furthermore, as $p$ increase, more parallel overhead is expected from the computations, therefore we expect that larger $p$ will not always provide advantages. To monitor the optimal amount of processors and  their effectiveness, we compute the efficiency of the parallel program
\begin{equation*}
     E = \frac{T_{s}}{p T_{p}} .   
\end{equation*}
Clearly, effective parallel programs have an efficiency closer to $1$, even though in practice this is hardly achievable, especially when the task to parallelization is not straightforward.
However, what is more meaningful is the concept of scalability \cite{Pacheco22}. We say that a program is strongly scalable if its efficiency increases with the number of processors while the problem size is kept fixed. On the other hand, if the efficiency is kept fixed as the number of processors as well as the problem size increases accordingly, then the program is said to be weakly scalable.
Scalability is a key concept in parallel programming, since a scalable program can be solve in almost arbitrary small time, given enough parallel resources.

Figure \ref{fig:system_efficiency} shows the efficiency plots for different problem sizes and different amount of processors involved. 

\begin{figure}[ht]
    %\label{fig:system_efficiency}
    %\centering
    \begin{tikzpicture}
        \begin{semilogxaxis}[
            scale=1.5,
            legend pos=south west,
            xlabel=Number of warps, ylabel=Efficiency,  title=Efficiency of the system solution]
        % serial time = 2704.288
        \addplot[color=blue,mark=*] coordinates {
        (3456, 0.025)
        (1728, 0.050)
        (864, 0.100)
        (432, 0.163)
        (216, 0.258)
        (108, 0.293)
        (54, 0.312)
        (27, 0.356)    
        };
        \addlegendentry{$7$ levels (22k)}
        % serial time = 14619.263
        \addplot[color=orange,mark=*] coordinates {
        (3456, 0.089)
        (1728, 0.161)
        (864, 0.260)
        (432, 0.299)
        (216, 0.327)
        (108, 0.370)
        (54, 0.382)
        (27, 0.425)
        };
        \addlegendentry{$8$ levels (88k)}
        % serial time = 75738.933
        \addplot[color=brown,mark=*] coordinates {
        (3456, 0.153)
        (1728, 0.269)
        (864, 0.329)
        (432, 0.375)
        (216, 0.389)
        (108, 0.417)
        (54, 0.434)
        (27, 0.465)
        };
        \addlegendentry{$9$ levels (350k)}
        % serial time = \sim 384751.159
        \addplot[color=violet,mark=*] coordinates {
        (3456, 0.173)
        (1728, 0.332)
        (864, 0.401)
        (432, 0.422)
        (216, 0.432)
        (108, 0.454)
        (54, 0.465)
        (27, 0.491)
        };
        \addlegendentry{$10$ levels (1.4M)}
    
        \end{semilogxaxis}
    
    \end{tikzpicture}
    \caption{Efficiency of the system solution with different data sizes}
    \label{fig:system_efficiency}
\end{figure}
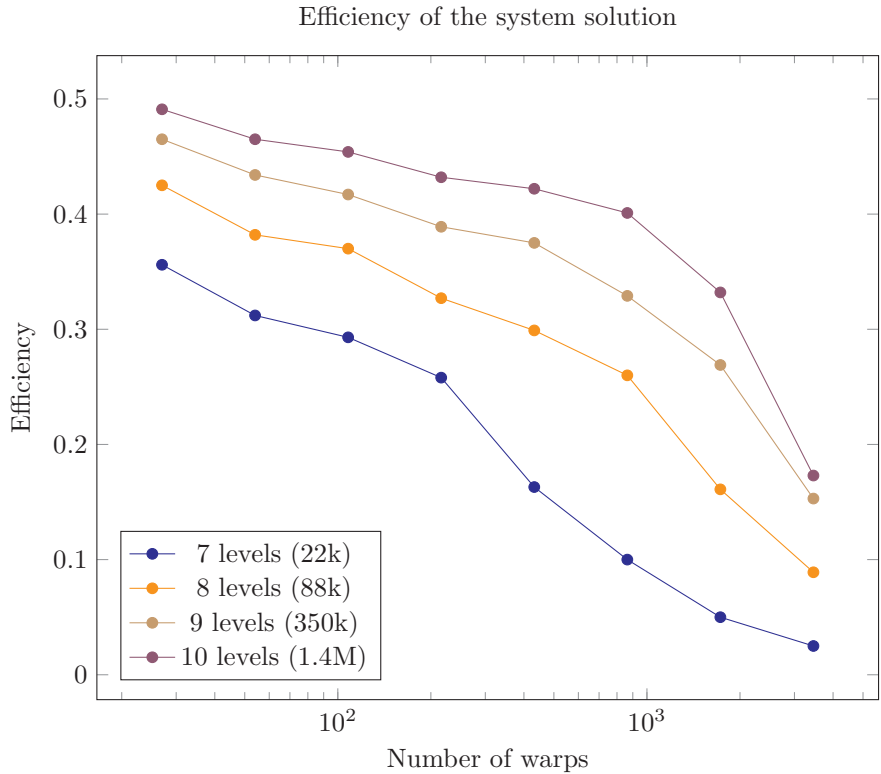

As expected, the smaller the number of processors involved the higher the efficiency. As the number of processors increase we have a regular decrease of the efficiency, that starts collapsing concurrently to the saturation of performance enhancement. Indeed, looking the 7 levels graphs, for $216$ warps, the system's graphical resolution undergoes a substantial shift, plateauing and concomitantly exhibiting a pronounced decline in efficiency. The same phenomena occurs for $864$ for 8 levels and at $1728$ for 9/10 levels. 
On the other direction, is emphasize how increasing the size of the problem for a given parallel settings leads to a higher efficiency.

Result confirms that the algorithm is weakly scalable, due to its complexity, even it can be seen how increasing the number of points lead to trajectories more close to constant efficiency.

\section*{Acknowledgement}
We would like to acknowledge very helpful discussions with 
Lorenz Gollwitzer and Holger Wendland. 
Moreover, we would like to thank the IT-support  of the Fachbereich 12 of Marburg university, in particular Markus M\"uhling, for technical advice about the GPU cluster.   
This work is funded by the Deutsche Forschungsgemeinschaft (DFG, German Research Foundation) –Projektnummer 452806809.

\end{document}